\documentclass[11pt,twoside]{article}
\usepackage[a4paper, left=3cm, right=3cm, top=3cm, marginparwidth=20mm, marginparsep=5mm]{geometry}
\usepackage[T1]{fontenc}
\usepackage[utf8]{inputenc} 
\usepackage{float} 
\usepackage[english]{babel}
\usepackage{amssymb} 
\usepackage{amsthm} 
\usepackage{amsmath}
\usepackage{mathrsfs}
\usepackage{amssymb} 
\usepackage{amsfonts}
\usepackage{doi}
\usepackage{mathrsfs}
\usepackage{hyperref}
\usepackage[export]{adjustbox}
\usepackage{authblk} 
\usepackage{listings}
\usepackage{accents}
\usepackage{subcaption}
\usepackage[numbered,framed]{matlab-prettifier}
\usepackage{wrapfig}
\usepackage{graphicx} 
\usepackage{paralist} 
\usepackage{mathtools} 
\usepackage[hang,flushmargin]{footmisc} 
\usepackage{array}
\usepackage{pgfplots}
\pgfplotsset{compat=1.3}
\usepackage{tikz}
\usepackage{bm}
\usepackage{nccmath}
\usepackage{theoremref}
\usepackage{geometry}
\usepackage{setspace}
\usepackage{fancyhdr} 
\usepackage[makeroom]{cancel} 
\usepackage{calc}
\usepackage{url}
\usepackage[small]{titlesec} 
\usepackage{emptypage}	
\usepgfplotslibrary{fillbetween}
\usetikzlibrary{intersections}
\usepackage{caption}
\usepackage{import}

\usepackage{lmodern}
\usepackage[ttscale=.875]{libertine}

\numberwithin{equation}{section} 




\usepackage{import}

\usepackage{fancyhdr}
\setlength{\headheight}{13.6pt}
\pagestyle{fancy}
\fancyhf{} 
\fancyhead[EC]{S. Bartels, B. Schmidt and P. Tscherner}
\fancyhead[OC]{Numerical Simulation of Foldable and Bilayer Plates}
\fancyfoot[C]{\thepage}

\pgfdeclarelayer{bg}
\pgfsetlayers{bg,main}

\usepackage{xparse}

\NewDocumentCommand{\dcurl}{sO{}m}{%
  \IfBooleanTF{#1}
    {\dcurlext{#3}}
    {\dcurlx[#2]{#3}}%
}
\NewDocumentCommand{\dcurlext}{m}{%
  \sbox0{%
    \mathsurround=0pt 
    $\left\{\vphantom{#1}\right.\kern-\nulldelimiterspace$%
  }%
  \sbox2{\{}%
  \ifdim\ht0=\ht2
    \{\kern-.625\wd2 \{#1\}\kern-.625\wd2 \}%
  \else
    \left\{\kern-.7\wd0\left\{#1\right\}\kern-.7\wd0\right\}%
  \fi
}
\NewDocumentCommand{\dcurlx}{om}{%
  \sbox0{\mathsurround=0pt$#1\{$}%
  \sbox2{\{}%
  \ifdim\ht0=\ht2
    \{\kern-.625\wd2 \{#2\}\kern-.625\wd2 \}%
  \else
    \mathopen{#1\{\kern-.7\wd0 #1\{}
    #2
    \mathclose{#1\}\kern-.7\wd0 #1\}}
  \fi
}

\NewDocumentCommand{\dsquare}{sO{}m}{%
  \IfBooleanTF{#1}
    {\dsquareext{#3}}
    {\dsquarex[#2]{#3}}%
}
\NewDocumentCommand{\dsquareext}{m}{%
  \sbox0{%
    \mathsurround=0pt 
    $\left[\vphantom{#1}\right.\kern-\nulldelimiterspace$%
  }%
  \sbox2{[}%
  \ifdim\ht0=\ht2
    [\kern-.525\wd2 [#1]\kern-.525\wd2 ]%
  \else
    \left[\kern-.6\wd0\left[#1\right]\kern-.6\wd0\right]%
  \fi
}
\NewDocumentCommand{\dsquarex}{om}{%
  \sbox0{\mathsurround=0pt$#1[$}%
  \sbox2{[}%
  \ifdim\ht0=\ht2
    [\kern-.525\wd2 [#2]\kern-.525\wd2 ]%
  \else
    \mathopen{#1[\kern-.6\wd0 #1[}
    #2
    \mathclose{#1]\kern-.6\wd0 #1]}
  \fi
}

\newcommand{\dx}{\,\mathrm{d}x}

\newcommand{\T}{^\mathsf{T}} 

\newcommand{\If}{\mathcal{C}}
\newcommand{\Ifh}{\mathcal{C}_h}

\newcommand{\Th}{\mathscr{T}_h}

\newcommand{\id}{\mathrm{id}}
\newcommand{\Nh}{\mathscr{N}_h}

\newcommand{\uproman}[1]{\uppercase\expandafter{\romannumeral#1}}

\makeatletter
\newcommand{\customlabel}[2]{%
\protected@write \@auxout {}{\string \newlabel {#1}{{#2}{}}}}
\makeatother

\setlength{\jot}{5pt}

\bibliographystyle{plainurl}

\setlength{\parindent}{0em}

\definecolor{highlight}{RGB}{255,70,0}

\newtheorem{thm}{Theorem}
\newtheorem{lem}{Lemma}
\newtheorem{prop}{Proposition}
\newtheorem{rmk}{Remark}
\newtheorem{alg}{Algorithm}
\newtheorem{defn}{Definition}
\newtheorem{cor}{Corollary}

\numberwithin{thm}{section}
\numberwithin{lem}{section}
\numberwithin{prop}{section}
\numberwithin{rmk}{section}
\numberwithin{alg}{section}
\numberwithin{defn}{section}
\numberwithin{figure}{section}
\numberwithin{cor}{section}

\setcounter{tocdepth}{4}
\setcounter{secnumdepth}{3}



\lstdefinestyle{myCustomMatlabStyle}{
	tabsize=2,
	xleftmargin=5pt,
	frame=single,
	framexrightmargin=-5pt,
	language=Matlab
}
\lstset{basicstyle=\footnotesize,style=myCustomMatlabStyle}

\newcommand{\eps}{\ensuremath{\varepsilon}}

\newcommand{\Addresses}{{
  \bigskip
  \small

  Sören Bartels, \textsc{Abteilung für angewandte Mathematik, Albert-Ludwigs-Universität Freiburg, Hermann-Herder-Str. 10, 79104 Freiburg im Breisgau, Germany}\par\nopagebreak
  \textit{Email address}: \texttt{bartels@mathematik.uni-freiburg.de}

  \medskip

  Bernd Schmidt, \textsc{Institut für Mathematik, Universität Augsburg, Universitätsstr.\ 14, 86159 Augsburg, Germany}\par\nopagebreak
  \textit{Email address}: \texttt{bernd.schmidt@math.uni-augsburg.de}

  \medskip

  Philipp Tscherner, \textsc{Abteilung für angewandte Mathematik, Albert-Ludwigs-Universität Freiburg, Hermann-Herder-Str. 10, 79104 Freiburg im Breisgau, Germany}\par\nopagebreak

}}


\newcommand{\cb}{\color{black}}

\newcommand{\cc}{\color{black}}
\newcommand{\cv}{\color{black}}

\begin{document}

\thispagestyle{empty}

\begin{center}
	\textbf{\large{NUMERICAL SIMULATION OF A FINE-TUNABLE FÖPPL--VON KÁRMÁN MODEL FOR FOLDABLE AND BILAYER PLATES}}\bigskip\\
	{\large Sören Bartels, Bernd Schmidt and Philipp Tscherner}
\end{center}
\medskip

\begin{abstract}
A numerical scheme is proposed to identify low energy configurations of a Föppl-von Kármán model for bilayer plates. The dependency of the corresponding elastic energy on the in-plane displacement $u$ and the out-of-plane deflection $w$ leads to a practical minimization of the functional via a decoupled gradient flow. In particular, the energies of the resulting iterates are shown to be monotonically decreasing. The discretization of the model relies on $P1$ finite elements for the horizontal part $u$ and utilizes the discrete Kirchhoff triangle for the vertical component $w$. The model allows for analysing various different problem settings via numerical simulation: (i) stable low-energy configurations are detected dependent on a specified prestrain described by elastic material properties, (ii) curvature inversions of spherical and cylindrical configurations are investigated, (iii) elastic responses of foldable cardboards for different spontaneous curvatures and crease geometries are compared.
\end{abstract}
\medskip

\let\thefootnote\relax\footnotetext{
\textit{Date}: \today\\
\textit{Keywords}: Nonlinear elasticity, Föppl--von Kármán model, bilayer plates, gradient flow, folding\\
\textit{2020 Mathematics Subject Classification}: 65N30, 74B20, 74K20}

\section{Introduction} 

{\cc The rigorous justification and numerical treatment of bilayer plate models have recently received considerable attention \cite{MR3602530,MR4156930}}, as they give rise to a wide range of applications including heated materials with inhomogeneous expansion coefficients \cite{bimetallic_strips} and crystallizations on top of substrates \cite{graphene1,graphene2}. Similar mechanics can be observed in natural systems, e.g., in biological materials with internal misfit caused by swelling or growing tissue \cite{flytrap,plant_bending}. Investigating such models can provide a deeper understanding of the elastic processes involved. A well-known model to describe elastic deformations of thin objects including nonlinear effects is the Föppl--von Kármán model \cite{ciarlet_fvk}. The authors of \cite{schmidt_multilayers,schmidt_hierarchy} recently derived such a model for bilayer plates via $\Gamma$-convergence, we refer to the seminal {\cb contributions \cite{fjm,fjm_vK}} for underlying concepts. It includes a parameter $\theta>0$ that determines the strength of prestrain acting on the elastic body. Under the assumption that the material is homogeneous with linear internal misfit, the deformation of a plate $\Omega\subset\mathbb{R}^2$ can be described by an in-plane displacement $u:\Omega\to\mathbb{R}^2$ and an out-of-plane deflection $w:\Omega\to\mathbb{R}$ via minimization of the dimensionally reduced elastic energy
\begin{align*}
	E^{\theta}(u,w) = \frac{1}{2} \int_{\Omega} |D^2w - \alpha I|^2  \dx
	+ \frac{\theta}{2} \int_{\Omega} |\nabla w \otimes \nabla w + \widetilde{\eps}(u)|^2 \dx - \int_\Omega f w \dx , 
\end{align*}
in a set of admissible pairs $ (u,w) \in H^1(\Omega,\mathbb{R}^2) \times H^2(\Omega) $ subject to appropriate boundary conditions and a vertical dead body load $f:\Omega \to \mathbb{R}$. 
The first term in the energy captures bending phenomena via deviations of the Hessian to the identity matrix 
scaled by some parameter $\alpha\in\mathbb{R}$ 
with respect to the squared Frobenius norm $|A|^2:=\sum_{i=1}^2\sum_{j=1}^2|a_{ij}|^2$, thereby leading to the preference of a certain curvature in the out-of-plane deflection $w$. The second part of the energy includes shearing effects and involves twice the symmetric gradient $ \widetilde{\eps}(u) = \nabla u + \nabla u^{\top} $ and the dyadic product $ x \otimes y = x y \T $, {\cb which introduces a coupling of the in- and out-of-plane components $u$ and $w$}, whose strength depends on the parameter choice $\theta\in(0,\infty)$. {\cb Extending previous results which considered limiting energy functionals within regimes in which the typical energy per unit volume scales with powers of the film thickness $\gamma$ (see, in particular, \cite{fjm_vK,lmp,LL20}), the parameter $\theta$ introduces an additional fine scale in the von Kármán regime $\gamma^4$. Indeed, for $\theta = 0$, $E^\theta$ reduces to the energy functional of a linear plate theory whereas, for $\theta = \infty$, the limiting energy $E^\theta$ is that of a linearized Kirchhoff plate theory in which {\cv $\nabla w \otimes \nabla w$ is constrained to be a symmetrized gradient} or, equivalently {\cv (cf.\ \cite{fjm_vK})}, $w$ satisfies the linearized isometry constraint $\det D^2w=0$.} {\cv Thus, the von Kármán functional augmented with the parameter $\theta$ provides a mathematical model that allows to discriminate `thick' and `thin' plates. In particular, it is able to mathematically sustain a basic phenomenon observed in engineering systems (\cite{SalamonMasters93,SalamonMasters95,FinotSuresh96,Freund00,KimLombardo08,EgunovKorvinkLuchnikov16}: Large prestrains in very thin layers result in cylindrical shapes whereas small prestrains in thick layers lead
to spherical caps, see \cite{schmidt_multilayers}.}

{\cc Furthermore, a piecewise minimization of the energy on two adjacent domains coupled with a continuity condition along a given connecting crease line lead to simulations of foldable single- and multilayer devices. Consequently, many interesting phenomena like the elastic response of foldable cardboards, cf. Figure \ref{fig:photos_cardboard}, or the actuation of bilayer mechanisms inspired by Venus flytraps, can be numerically investigated.}

\begin{figure}[H] 
\centering
\begin{minipage}{0.34\textwidth}
\centering
\includegraphics[width=0.5\textwidth]{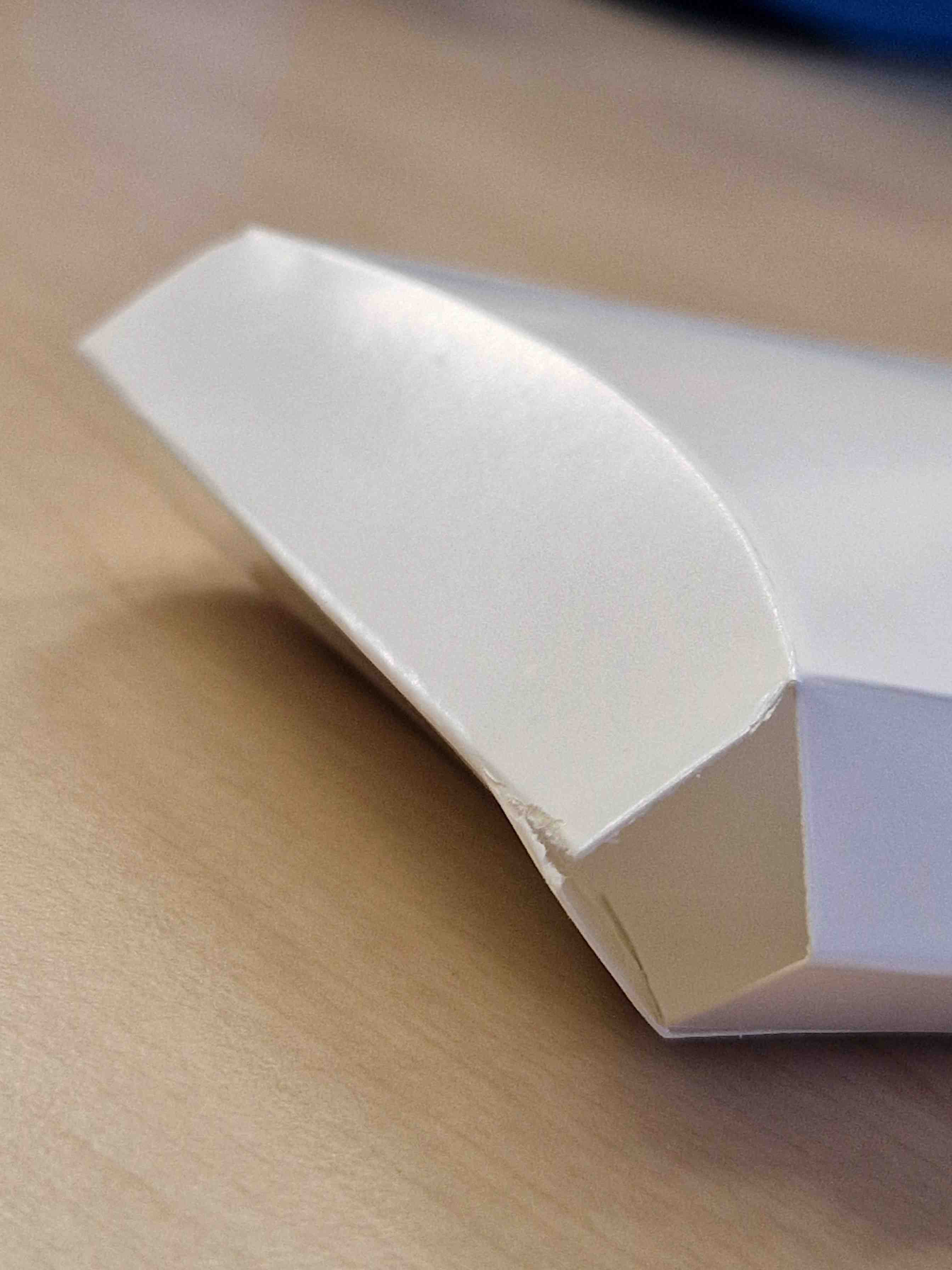}
\end{minipage}
\begin{minipage}{0.34\textwidth}
\centering
\includegraphics[width=0.5\textwidth]{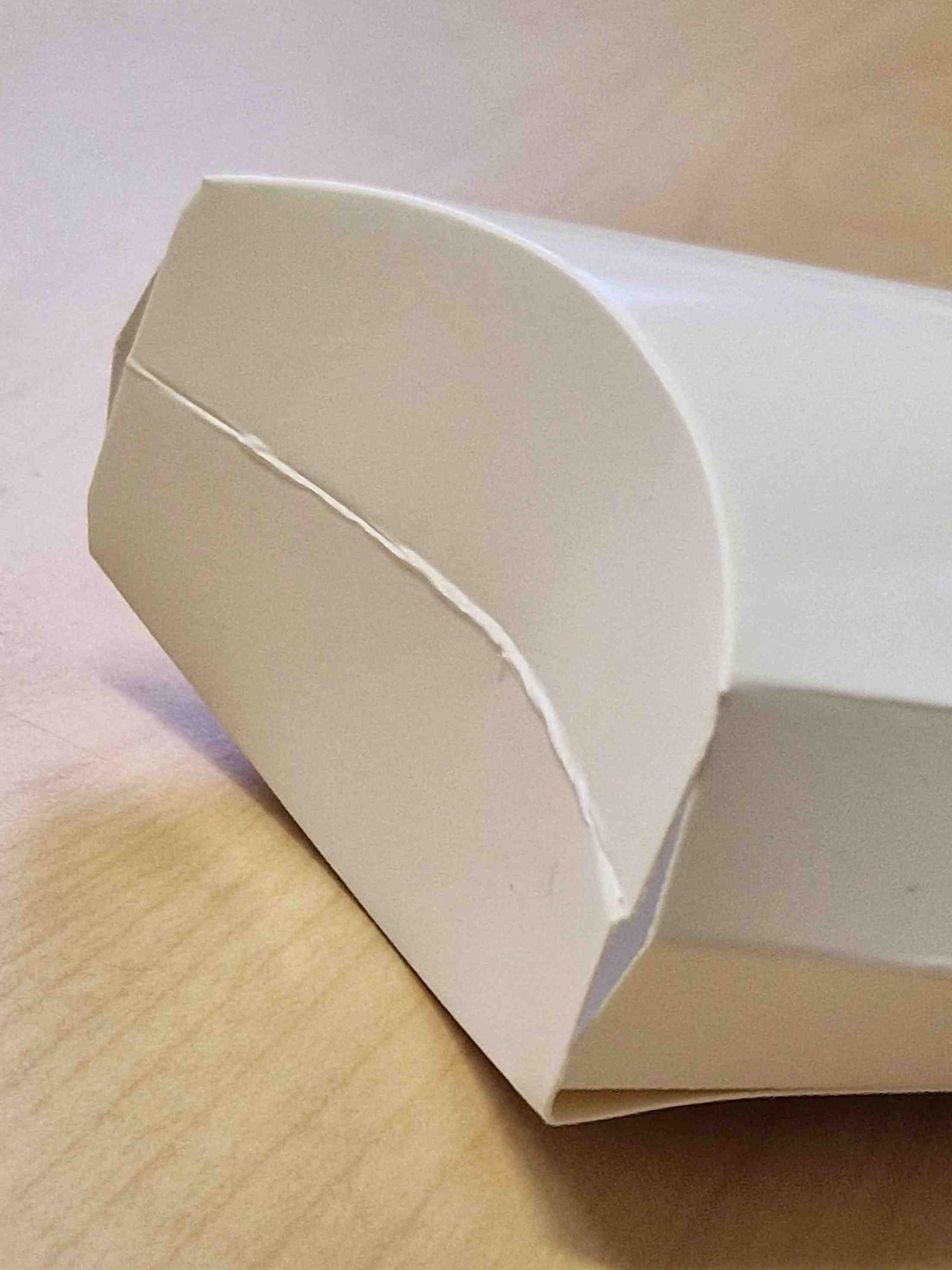}
\end{minipage}
\caption{Bistable mechanism of a foldable cardboard. After repeated actuation, a crack can be observed (left picture) which emerges from the boundary and spreads along the given crease.
}
\label{fig:photos_cardboard}
\end{figure} 

The nonconvex structure of the elastic energy complicates the computation of global minimizers. An effective method to numerically detect stationary configurations with low elastic energy has been proposed in \cite{bartels_fvk}. It includes a gradient flow for the energy with respect to both variables $u$ and $w$. In our case, this results in the coupled system of nonlinear evolution equations
\begin{align*}
	\bigl(\partial_t w,v\bigr)_\text{ver}=&-\partial_{w} E^{\theta}(u,w)[v] \\
	= & -\bigl(D^2w - \alpha I , D^2v\bigr) - 2 \theta \bigl(|\nabla w|^2 \nabla w + \widetilde{\eps}(u) \nabla w , \nabla v\bigr) + \bigl(f,v\bigr),\\
	\bigl(\partial_t u,z\bigr)_\text{hor}=&-\partial_u E^{\theta}(u,w)[z] \\
	= & -\theta \bigl(\widetilde{\eps}(u) , \widetilde{\eps}(z)\bigr) - \theta \bigl(\nabla w \otimes \nabla w , \widetilde{\eps}(z)\bigr),
\end{align*}
with the Fréchet derivatives $\partial_w E$, $\partial_u E$ and inner products $(\cdot\,,\,\cdot)$, $(\cdot\,,\,\cdot)_\text{ver}$ and $(\cdot\,,\,\cdot)_\text{hor}$ on $L^2(\Omega)$, $H^2(\Omega)$ and $H^1(\Omega,\mathbb{R}^2)$, respectively. The identities $|a\otimes a |^2=|a|^4$ and $S:(a\otimes b)=(Sa)\cdot b=(Sb)\cdot a$ for symmetric matrices $S\in \mathbb{R}^{2\times2}$ and vectors $a,b\in\mathbb{R}^2$ have been used in the derivation of the above system of equations.

To specify a discrete version of the evolution equations, time derivatives are replaced by the backward difference quotients 
\begin{align*}
d_t u^k = \frac{1}{\tau_k}\bigl(u^k-u^{k-1}\bigr), \quad d_t w^k = \frac{1}{\tau_k}\bigl(w^k-w^{k-1}\bigr),
\end{align*}
where $(\tau_k)_{k\geq1}$ denotes a sequence of positive step sizes. A decoupling of the evolution equations is employed to combine practical stability properties of an implicit discretization with amenable solvability of an explicit treatment. In the same manner, we make use of the delay effect of the discrete product rule and define the average $w^{k-1/2}$ as
\begin{align*}
w^{k-1/2}=\frac{1}{2}\bigl(w^k+w^{k-1}\bigr),
\end{align*} 
to arrive at the following iterative scheme: Given $(u^{k-1},w^{k-1})$ compute $(u^{k},w^{k})$ such that
\begin{align*}
	\bigl(d_t w^k,v\bigr)_\text{ver} = & -\bigl(D^2w^k - \alpha I , D^2v\bigr) - 2 \theta \bigl(|\nabla w^k|^2 \nabla w^k + \widetilde{\eps}(u^{k-1}) \nabla w^{k-1/2} , \nabla v\bigr) + \bigl(f,v\bigr),\\
	\bigl(d_t u^k,z\bigr)_\text{hor}	= & -\theta \bigl(\widetilde{\eps}(u^k) , \widetilde{\eps}(z)\bigr) - \theta \bigl(\nabla w^k \otimes \nabla w^k , \widetilde{\eps}(z)\bigr),
\end{align*}
for all $(v,z)$ satisfying homogeneous boundary conditions. We show that the scheme is unconditionally stable and energy decreasing which implies convergence of a sequence $(u^k,w^k)$ to a stationary configuration $(u,w)$. To ensure well-posedness of the equations we employ the following adaptive time stepping scheme that includes an appropriate parameter $\tau_\text{max}>0$ to prevent numerical overflow:
\begin{itemize}
\setlength\itemsep{1pt}
\item decrease $\tau_k$ until the Newton scheme terminates within $N>0$ iterations
\item set $\tau_{k+1} = \min\bigl\{2\tau_k,\tau_\text{max}\bigr\}$ for the next gradient flow step
\end{itemize}
The dependence of the Föppl--von Kármán model on the in-plane and out-of-plane components allows for different spatial discretizations of the two respective variables. In particular, the displacement $u$ is discretized using $P_1$ finite elements whereas the deflection $w$ is discretized via the discrete Kirchhoff triangle \cite{bartels_dkt}. The latter allows for a practical realization of bending elements since the discrete gradient values $\nabla_h w_h$ belong to degrees of freedom of the finite element space. The discrete elastic energy functional reads
\begin{align*}
E^{\theta}_h(u_h,w_h) = & \ \frac{1}{2} \int_{\Omega} |\nabla\nabla_h w_h - \alpha I |^2  \dx \\
	& \hspace{10pt} + \frac{\theta}{2} \int_{\Omega} \widehat{\mathcal{I}}_h\bigl[|\nabla w_h \otimes \nabla w_h + \widetilde{\eps}(u_h)|^2\bigr] \dx - \int_\Omega \widehat{\mathcal{I}}_h\bigl[f w_h\bigr] \dx ,
\end{align*}
where $\widehat{\mathcal{I}}_h$ is the elementwise nodal interpolation operator introduced in Section \ref{sec:P1} and $\nabla_h$ denotes the discrete gradient operator defined in Section \ref{sec:dkt}. Under appropriate boundary conditions the discrete elastic energy $E_h$ $\Gamma$-converges to the continuous elastic energy $E$ in $H^1(\Omega,\mathbb{R}^2)\times W^{1,4}(\Omega)$ for a sequence of regular triangulations $(\Th)_{h>0}$ as $h\to0$.  

The practical realization of the model based on rigorously justified numerical methods is motivated by results from \cite{schmidt_multilayers}. The authors quantify configurations of spherical and cylindrical shape as energy minimizing configurations of the continuous limit models as $\theta\to0$ and $\theta\to\infty$, respectively. We investigate stationary low energy configurations of the discrete model for intermediate values $\theta\in(0,\infty)$ via numerical experiments. The aim is to possibly identify a critical range of values $\theta$ at which a rapid transition from spherical to cylindrical shape takes place. 
{\cb Such a stark change of material response in a critical parameter region has been indicated by first experiments in \cite{schmidt_multilayers}, which were based on an ad-hoc projected gradient descent using  nonconforming $P_1$ elements on the out-of-plane strain and enforcing this quantity to be curl-free eventually by a penalization. Our present numerical analysis and implementation detailed above substantiates these observations. Moreover,} we perform numerical experiments on the curvature inversion of the resulting spherical and cylindrical configurations and investigate the elastic responses of foldable cardboards for different spontaneous curvatures and crease geometries.

The outline of this article is as follows. Section 2 is devoted to the derivation of the continuous Föppl--von Kármán model along with discretization aspects. In Section 3 we show the energy decreasing property of the discrete gradient flow. The $\Gamma$-convergence of the discrete energy $E_h$ to the continuous energy $E$ is derived in Section 4. Section 5 contains various numerical experiments.

\section{Preliminaries}

\subsection{Model Derivation}\label{sec:model_derivation}

A rigorous derivation of the Föppl--von Kármán model for prestrained plates has been established in \cite{schmidt_hierarchy} via $\Gamma$-convergence. For a thin plate $\Omega_\gamma=\Omega\times(-\gamma/2,\gamma/2)$ of thickness $\gamma>0$ with $\Omega\subset\mathbb{R}^2$ {\cb a bounded Lipschitz domain} and some elastic energy density $W_\gamma:\mathbb{R}^{3\times3}\to\mathbb{R}$, the corresponding three-dimensional scaled hyperelastic energy reads
\begin{align*}
E_\text{3d}(y)=\frac{1}{\gamma^4}\int_{\Omega_1} W_\gamma(x_3,\partial_1y,\partial_2y,\gamma^{-1}\partial_3y){\cb \dx},
\end{align*}
where {\cb $y \in H^1(\Omega_1,\mathbb{R}^{3})$} is defined via rescaling to the plate $\Omega_1$ of thickness $\gamma=1$. In contrast to general singlelayer models, the energy considered here explicitly depends on the out-of-plane variable $x_3\in(-1/2,1/2)$. The elastic energy density $W_\gamma$ reads
\begin{align*}
W_\gamma(x_3,F) = W_0(x_3,F(I+\gamma^2\sqrt{\theta}B_\gamma(x_3))),\quad F\in\mathbb{R}^{3\times3},
\end{align*}
and is described by the stored energy density $W_0$ of the reference configuration that depends on some \textit{internal misfit} $B_\gamma:(-1/2,1/2)\to\mathbb{R}^{3\times3}$ weighted by {\cb $\gamma^2 \sqrt{\theta}$ with} a parameter $\theta\in(0,\infty)$. 
The energy density $W_0$ is assumed to satisfy the classical, physically motivated requirements such as \textit{smoothness} in a neighbourhood of $SO(3)$, \textit{frame indifference}, {\cb \textit{non-degeneracy}}, and \textit{quadratic growth}, {\cb so that it is minimal (with value $0$) precisely on $SO(3)$. 
As the strength of the misfit scales with $\gamma^2$, typical deformation gradients deviate from $SO(3)$ by a comparable amount and the effect of the prestrain is suitably described within the von Kármán energy scaling $\gamma^4$.} 
The variable $\theta$ specifies the amount of misfit {\cb on that scale} and serves as an interpolation between the linearized von Kármán ($\theta\to0$) and the linearized Kirchhoff ($\theta\to\infty$) models as $\gamma\to0$. 
{\cb By frame invariance, rigid body motions do not store elastic energy so that for each deformation $y$ and $R \in SO(3)$, $c \in \mathbb{R}^3$ one has 
\begin{align*}
  E_\text{3d}(y) = E_\text{3d}(Ry + c). 
\end{align*}
Considering the asymptotic behavior of a sequence of rescaled deformations as $\gamma \to 0$ we may and will thus renormalize by tacitly applying a rigid body motion to the plate so as to guarantee that 
\begin{align}\label{eq:y-normalize}
 \| y - \mathrm{id}_\gamma \|_{L^2} 
 = \min \big\{ \|Ry + c - \mathrm{id}_\gamma \|_{L^2} : R \in SO(3), \, c \in \mathbb{R}^3 \big\},   
\end{align}
where $\id_\gamma(x) = (x_1,x_2,\gamma x_3)$. 
In the von Kármán regime one then considers the rescaled in-plane and out-of-plane displacements 
\begin{align}\label{eq:full-displ}
\widehat u_i(x_1,x_2) = \frac{1}{\theta \gamma^2} ( y_i(x)-x_i ) \quad(i=1,2) \quad\text{and}\quad  
\widehat w(x_1,x_2) = \frac{1}{\sqrt{\theta}\gamma} y_3(x)
\end{align} 
and their averages along the small plate height 
\begin{align*}
u_i(x_1,x_2) = \int_{-1/2}^{1/2} \widehat u(x) \dx_3 
\quad(i=1,2) \quad\text{and}\quad  
w(x_1,x_2) = \int_{-1/2}^{1/2} \widehat w(x) \dx_3  
\end{align*} 
as $\gamma \to 0$. The $\Gamma$-limit of $E_\text{3d}$ together with a suitable compactness results for bounded energy sequences has been obtained in \cite{schmidt_hierarchy} with respect to convergence of the averaged displacement variable $(u,w)$. 
Since the corresponding deformation gradients are close to rotations, the resulting energy functional $(u,w) \mapsto E^{\theta}(u,w)$ only depends on the infinitesimal elastic moduli and is explicitly computable from $D_F^2 W_0(t,I)$. Omitting the forcing term, for the prototypical example of} an isotropic homogeneous material {\cb with $W_0(t,F) = \mathrm{dist}^2(F,SO(3))$ and} linear internal misfit $B(t)= tI$ we arrive at the following energy functional
\begin{align*}
	E^{\theta}(u,w) = \frac{1}{2} \int_{\Omega} |D^2w -  I |^2  \dx
	+ \frac{\theta}{2} \int_{\Omega} |\nabla w \otimes \nabla w + \widetilde{\eps}(u)|^2 \dx,
\end{align*}
which is a rescaled version of the general Föppl--von Kármán functional. The corresponding set of admissible pairs $(u,w)\in\mathcal{A}$ is defined as
\begin{align*}
\mathcal{A} = \mathcal{A}_0 + (u_D,w_D) \subset H^1(\Omega,\mathbb{R}^2) \times H^2(\Omega).
\end{align*}
Here, $\mathcal{A}$ and $\mathcal{A}_0$ are affine and linear subspaces of $H^1(\Omega,\mathbb{R}^2) \times H^2(\Omega)$, respectively, such that the following Korn--Poincaré inequality holds, i.e.,
\begin{align}\label{korn_poincare}
||u_0||_{H^1(\Omega)} + ||w_0||_{H^2(\Omega)} \leq c_P \Bigl(||\widetilde{\varepsilon}(u_0)|| + ||D^2w_0|| \Bigr)
\end{align}
for all $(u_0,w_0)\in\mathcal{A}_0$. For instance, this is the case if we have
\begin{align*}
u_0|_{\gamma_D}=0,\quad w_0|_{\gamma_D}=0,\quad \nabla w_0|_{\gamma_D} = 0,
\end{align*}
on a subset $\gamma_D\subset\partial\Omega$ of positive surface measure or
\begin{align*}
u_0|_{\partial\Omega}=0,\quad w_0|_{\partial\Omega}=0,
\end{align*}
on the whole boundary $\partial\Omega$ of $\Omega$. {\cc To guarantee well-posedness of the iterative scheme, we note that the Sobolev inequality
\begin{align}\label{sobolev}
||\nabla w_0||_{L^4(\Omega)} \leq c_S||D^2w_0||
\end{align}
holds for all $w_0\in\mathcal{A}_0$ due to the Sobolev embedding $H^2(\Omega) \hookrightarrow W^{1,4}(\Omega)$.}

{\cb Our results on the minimizers of $E^\theta$ yields a very precise asymptotic analysis of low energy configurations for the original three dimensional functional, which for definiteness we formulate here for the prototypical case 
\begin{align*}
 E_\mathrm{3d}^\gamma(y) 
 = \frac{12}{\gamma^4}\int_{\Omega_1} \mathrm{dist}^2\big((1+\gamma^2(\theta/3)^{1/2}\,x_3)(\partial_1y,\partial_2y,\gamma^{-1}\partial_3y),SO(3)\big) \dx.  
\end{align*} 
(We introduced irrelevant constants so as to arrive at the simplest limiting functional.) 

A first consequence of the $\Gamma$-convergence of $E_\text{3d}$ to $E^{\theta}$ is that for a sequence of (almost) minimizers $y_\gamma$ of $E_\text{3d}$ one has subsequential convergence of the rescaled averaged dispacements $u_\gamma \to u$ and $w_\gamma\to w$, where $(u,w)$ is a minimizer of $E^{\theta}$. Indeed, combining the $\Gamma$-convergence and compactness results in \cite[Theorem~3.1 and Lemma~4.1]{schmidt_hierarchy} with the observation in \cite[Proposition~2]{BraunSchmidt:22} yields a slightly stronger result for the limiting behavior of the full displacements in \eqref{eq:full-displ}
with limiting plate functional 
\begin{align*}
	E^{\theta}(u,w) = \frac{1}{2} \int_{\Omega} |D^2w -  I |^2  \dx
	+ \frac{\theta}{2} \int_{\Omega} |\nabla w \otimes \nabla w + \widetilde{\eps}(u)|^2 \dx.
\end{align*}

\begin{thm}
Suppose $(y_\gamma) \subset H^1(\Omega_1,\mathbb{R}^3)$ is a sequence of almost minimizers for $E_\mathrm{3d}$, i.e.,
\begin{align*}
 \lim_{\gamma \to 0} \Big( E_\mathrm{3d}^\gamma(y_\gamma) - \min\big\{ E_\mathrm{3d}(y) : y \in H^1(\Omega_1,\mathbb{R}^3) \big\} \Big) = 0,  
\end{align*}
such that \eqref{eq:y-normalize} is satisfied for every $y_\gamma$. 
Then there exists a subsequence (not relabeled) such that the corresponding displacements $(\widehat{u}_\gamma, \widehat{w}_\gamma)$ given in \eqref{eq:full-displ} satisfy 
\begin{align*}
 \widehat{u}_\gamma \rightharpoonup \widehat{u} 
 ~\text{in}~ H^1(\Omega_1, \mathbb{R}^2) 
 \quad\text{and}\quad 
 \widehat{w}_\gamma \rightharpoonup \widehat{w} 
 ~\text{in}~ H^1(\Omega_1),  
\end{align*}
where the limiting $(\widehat u, \widehat w)$ is given by 
\begin{align*}
 \widehat u(x) &= u(x_1,x_2) - \frac{1}{\sqrt{\theta}} x_3 \big( \partial_1 w(x_1,x_2),\partial_2 w(x_1,x_2) \big), \\ 
 \widehat w(x) &= w(x_1,x_2) + \frac{1}{\sqrt{\theta}} x_3    
\end{align*}
and $(u,w)$ is a minimizer of $E^{\theta}$. 
\end{thm}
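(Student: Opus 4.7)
The plan is to combine three ingredients from the quoted literature: the compactness result \cite[Lemma~4.1]{schmidt_hierarchy}, the $\Gamma$-convergence \cite[Theorem~3.1]{schmidt_hierarchy}, and the refined asymptotic representation of low-energy deformations in \cite[Proposition~2]{BraunSchmidt:22}. The overall structure is the standard $\Gamma$-convergence route to convergence of minimizers, augmented at the end by a rigidity-based upgrade from averaged to full displacements.

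First, I would establish a uniform bound $E_\mathrm{3d}^\gamma(y_\gamma) \le C$. Since $(y_\gamma)$ is an almost-minimizing sequence, it suffices to exhibit some competitor whose energy stays bounded; testing $E_\mathrm{3d}^\gamma$ against a $\Gamma$-$\limsup$ recovery sequence built from any fixed pair $(\tilde u,\tilde w) \in H^1(\Omega,\mathbb{R}^2) \times H^2(\Omega)$ provides this bound. The normalization \eqref{eq:y-normalize} removes the rigid-motion ambiguity that would otherwise obstruct compactness. Invoking \cite[Lemma~4.1]{schmidt_hierarchy} then yields a subsequence along which the averaged displacements converge $u_\gamma \rightharpoonup u$ in $H^1(\Omega,\mathbb{R}^2)$ and $w_\gamma \rightharpoonup w$ in $H^2(\Omega)$.

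Second, I would identify $(u,w)$ as a minimizer of $E^{\theta}$ by the standard argument: the lim-inf inequality provided by $\Gamma$-convergence gives $E^{\theta}(u,w) \le \liminf_\gamma E_\mathrm{3d}^\gamma(y_\gamma)$, whereas the almost-minimization of $(y_\gamma)$ together with the recovery-sequence property, tested against an arbitrary competitor, shows $\liminf_\gamma E_\mathrm{3d}^\gamma(y_\gamma) \le E^{\theta}(\tilde u, \tilde w)$ for every $(\tilde u, \tilde w)$. Combining these inequalities yields minimality of $(u,w)$.

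The main obstacle is the third step: lifting the averaged convergence to the asserted weak convergence of the full rescaled displacements $\widehat u_\gamma, \widehat w_\gamma$ from \eqref{eq:full-displ} in $H^1(\Omega_1)$, with the specific Kirchhoff--Love profile. For this I would invoke \cite[Proposition~2]{BraunSchmidt:22}, which asserts that for bounded energy sequences satisfying \eqref{eq:y-normalize} one has, modulo an $H^1(\Omega_1)$-vanishing remainder, the pointwise-in-$x_3$ decomposition
\begin{align*}
\widehat u_\gamma(x) &= u_\gamma(x_1,x_2) - \frac{1}{\sqrt{\theta}}\, x_3\, \bigl(\partial_1 w_\gamma(x_1,x_2),\partial_2 w_\gamma(x_1,x_2)\bigr), \\
\widehat w_\gamma(x) &= w_\gamma(x_1,x_2) + \frac{1}{\sqrt{\theta}}\, x_3.
\end{align*}
The linear $x_3$-terms encode the infinitesimal rotation of normal fibers forced by a deformation gradient close to $SO(3)$, and the prefactor $1/\sqrt{\theta}$ reflects the relative scalings ($\theta\gamma^2$ for in-plane versus $\sqrt{\theta}\gamma$ for out-of-plane) chosen in \eqref{eq:full-displ}. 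The compactness granted by this representation (in particular, weak convergence of $\nabla w_\gamma$ in $H^1(\Omega,\mathbb{R}^2)$ from $w_\gamma \rightharpoonup w$ in $H^2(\Omega)$) combined with the limits extracted in the first step then immediately identifies the weak limits $\widehat u$ and $\widehat w$ in the form claimed in the statement.
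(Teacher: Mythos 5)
Your proposal follows essentially the same route the paper indicates: a uniform energy bound from almost-minimization, compactness and the lim-inf/lim-sup identification of $(u,w)$ as a minimizer via \cite[Theorem~3.1 and Lemma~4.1]{schmidt_hierarchy}, and the upgrade from averaged to full displacements via \cite[Proposition~2]{BraunSchmidt:22}, which is exactly the combination the paper invokes without further detail. The only imprecision is your appeal to ``$w_\gamma \rightharpoonup w$ in $H^2(\Omega)$'': the averaged deflection is in general only an $H^1$ function, and the linear-in-$x_3$ profile is identified through the approximating rotation field from geometric rigidity (whose relevant entries converge to $\nabla w$ in $L^2$) rather than through $H^2$ bounds on $w_\gamma$ itself.
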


We note that the in-plane scale $\theta\gamma^2$ is negligible with respect to the deflection scale $\sqrt{\theta} \gamma$.  
Thus, the descaled original displacement is to leading order determined by the limiting deflection $w$: 
\begin{align}\label{eq:y-gamma-descaled}
 y_\gamma(x) &= \big( x_1 + \theta \gamma^2 \widehat u_1(x) + o(\gamma^2),  x_2 + \theta \gamma^2 \widehat u_2(x) + o(\gamma^2), \sqrt{\theta} \gamma \widehat w(x) + o(\gamma) \big)\nonumber \\ 
 &= (x_1,x_2,\gamma x_3) + \sqrt{\theta} \gamma \, w(x_1,x_2) (0,0,1)\T + o(\gamma).   
\end{align}
More precisely, in terms of the scaled identity $\mathrm{id}_\gamma(x) = (x_1,x_2,\gamma x_3)$ we have: 
\begin{cor}
Suppose $(y_\gamma) \subset H^1(\Omega_1,\mathbb{R}^3)$ is a sequence of almost minimizers for $E_\mathrm{3d}$
such that \eqref{eq:y-normalize} is satisfied for every $y_\gamma$. 
Then for a subsequence (not relabeled) we have 
\begin{align*}
 \frac{1}{\sqrt{\theta}\gamma} ( y_\gamma - \mathrm{id}_\gamma ) \rightharpoonup w\,(0,0,1)\T  
 ~\text{in}~ H^1(\Omega_1, \mathbb{R}^3),  
\end{align*}
where $w \in H^1(\Omega)$ is such that $(u,w)$ is a minimizer of $E^{\theta}$ for a suitable $u$, i.e., a minimizer for $\min\{ E^{\theta}(u, \cdot) : u \in H^1(\Omega,\mathbb{R}^2)\}$. 
\end{cor}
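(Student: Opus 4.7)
My plan is to deduce the corollary directly from the preceding theorem by inverting the defining relations \eqref{eq:full-displ} for the full rescaled displacements and tracking the two different vanishing scales separately. The key observation will be that the in-plane displacements enter $y_\gamma - \mathrm{id}_\gamma$ at the much smaller scale $\theta\gamma^2$, so that after dividing by $\sqrt{\theta}\gamma$ they should disappear in the limit, while the out-of-plane component retains the weak limit supplied by the theorem.

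First I would extract the subsequence from the theorem so that $\widehat{u}_\gamma \rightharpoonup \widehat{u}$ in $H^1(\Omega_1,\mathbb{R}^2)$ and $\widehat{w}_\gamma \rightharpoonup \widehat{w}$ in $H^1(\Omega_1)$, with $\widehat{w}(x)=w(x_1,x_2)+x_3/\sqrt{\theta}$ and $(u,w)$ a minimizer of $E^\theta$. Inverting \eqref{eq:full-displ} and using $\mathrm{id}_\gamma(x)=(x_1,x_2,\gamma x_3)$, I would rewrite $\frac{1}{\sqrt{\theta}\gamma}(y_\gamma - \mathrm{id}_\gamma)$ componentwise: the first two components equal $\sqrt{\theta}\gamma\,\widehat{u}_{i,\gamma}$ for $i=1,2$, and the third equals $\widehat{w}_\gamma - x_3/\sqrt{\theta}$. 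I would then treat the horizontal and vertical contributions separately. For the horizontal ones, I would use only the uniform $H^1$-bound on $\widehat{u}_\gamma$ implied by weak convergence: multiplication by the vanishing prefactor $\sqrt{\theta}\gamma$ yields strong, and in particular weak, convergence to zero in $H^1(\Omega_1,\mathbb{R}^2)$. For the vertical one, I would subtract the fixed affine function $x_3/\sqrt{\theta}$, which preserves weak $H^1$-convergence and delivers the limit $w(x_1,x_2)$, interpreted in $H^1(\Omega_1)$ via trivial extension in $x_3$.

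Assembling the three components should produce the claimed weak limit $w\,(0,0,1)\T$ in $H^1(\Omega_1,\mathbb{R}^3)$. I do not anticipate any genuine analytical obstacle: the corollary is essentially a cosmetic repackaging of the theorem, and the only point requiring care is the distinction between mere $H^1$-boundedness (enough to annihilate the in-plane components against the vanishing scale $\sqrt{\theta}\gamma$) and the actual $H^1$-weak limit (needed to identify the out-of-plane limit). The concluding assertion that $w$ minimizes $E^\theta(u,\cdot)$ for some admissible $u$ is immediate from the joint minimality of the pair $(u,w)$ for the full functional.
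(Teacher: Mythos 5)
Your proposal is correct and follows essentially the same reasoning the paper sketches in the discussion surrounding \eqref{eq:y-gamma-descaled}: invert \eqref{eq:full-displ}, note that the in-plane contribution $\theta\gamma^2\widehat u_\gamma$ is negligible at the deflection scale $\sqrt{\theta}\gamma$, and subtract the affine offset $x_3/\sqrt{\theta}$ from $\widehat w_\gamma$ to retrieve $w$. The only point you need to be careful to make explicit is that the uniform $H^1(\Omega_1)$-bound on $\widehat u_\gamma$ (from weak convergence) is all that is used for the in-plane part, and that $w$, a priori a function on $\Omega$, embeds into $H^1(\Omega_1)$ by constant extension in $x_3$; both steps you already identify.
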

}

{\cv
\subsection{Remarks on plates with folds}
\newcommand{\R}{\mathbb{R}}

While the above results apply to elastic plates, in Section~\ref{subsec:FoldableCardboard} below we will also report on numerical experiments for plates which can be folded along some special curves. From an analytical point of view, the derivation of suitable effective plate theories for such foldable plates is rather challenging. We include here a short discussion of the difficulties that occur when modeling infinitesimal deflections in foldable thin structures. 

For homogeneous Kirchhoff plates which are subject to finite (nonlinear) bending and whose energy scales like $\gamma^2$, folds and even cracks and voids have been sucessfully analyzed in \cite{BBH,SantilliSchmidt:23}. Following the approach in \cite{BBH} one may consider a plate whose stored energy function is weakened in a tubular neighborhood of some crease line. More precisely, let $\Sigma \subset \Omega$ be a Jordan arc with both endpoints on the same connected component of $\partial \Omega$ and such that the two connected components of $\Omega \setminus \Sigma$ are themselves Lipschitz domains. Let $\Sigma_r = \{x \in \R : \mathrm{dist}(x,\Sigma) < r\}$ and introduce the damage indicator $f^\gamma : \Omega_1 \to [0,1]$ by 
\[ f^\gamma(x) = \eps_\gamma \chi_{\Sigma_{\gamma}}(x_1,x_2) + 1 - \chi_{\Sigma_{\gamma}}(x_1,x_2),\quad\mbox{where } \gamma^2 \lesssim \eps_\gamma \ll \gamma. \]
(More general models can be considered.) 
The main result of \cite{BBH} implies that, as $\gamma \to 0$ 
the prototypical energy functionals 
\begin{align*}
y \mapsto \frac{1}{\gamma^2}\int_{\Omega_1} f^\gamma \, \mathrm{dist}^2\big((\partial_1y,\partial_2y,\gamma^{-1}\partial_3y), SO(3)) \dx,
\end{align*}
$\Gamma$-converge to the limiting functional 
\[ E_{K}^{\rm hom}(y) 
   = \begin{cases}
       \frac{1}{24} \int_{\Omega\setminus\Sigma} |A|^2 \, \dx &\mbox{if } y \in \mathcal{A}, \\
       + \infty &\mbox{otherwise}. 
     \end{cases} \]
Here the admissible class $\mathcal{A}$ consists of piecewise isometric immersions: 
\[ \mathcal{A} 
   = \big\{ y \in H^1(\Omega;\R^3) \cap H^2(\Omega\setminus \Sigma;\R^3) : \nabla y\T \nabla y = I \mbox{ a.e.\ on } \Omega \big\}, \]
and $A$ is the second fundamental form of the surface $y(\Omega)$. 
A straightforward combination of these results with the analysis of multilayer Kirchhoff plates, cf.\ \cite{Schmidt:07a,Schmidt:07b,Padilla-Garza:22}, then shows that the functionals
\begin{align*}
 y \mapsto  
 \frac{1}{\gamma^2}\int_{\Omega_1} f^\gamma \, \mathrm{dist}^2\big((1+\gamma(a + o_\gamma(1))\,x_3)(\partial_1y,\partial_2y,\gamma^{-1}\partial_3y),SO(3)\big) \dx  
\end{align*} 
for given $a \in \R$ $\Gamma$-converge to 
\[ E_{K}(y) 
   = \begin{cases}
       \frac{1}{24} \int_{\Omega\setminus\Sigma} |A - a I|^2 \, \dx &\mbox{if } y \in \mathcal{A}, \\
       + \infty &\mbox{otherwise}. 
     \end{cases} \]
Moreover, bounded energy sequences are precompact. We remark that the scaling assumption $\gamma^2 \lesssim \eps_\gamma \ll \gamma$ for the degradation strength guarantees that near the crease the material is so weak that arbitrarily large folding angles are possible at zero energy while still being strong enough not to break. 

In the von Kármán regime, the interplay of possible folding angles and their energetic costs is more complicated. One natural choice is to consider a very weak regime where the damage indicator now satsifies  
\[ f^\gamma = \eps_\gamma \chi_{\Sigma_{\gamma}} + 1 - \chi_{\Sigma_{\gamma}} \quad \mbox{with } \gamma^4 \lesssim \eps_\gamma \ll \gamma^3. \]
Then still arbitrarily large folding angles are possible at zero energy. If we adapt the energy functional of the previous section by setting 
\begin{align*}
 E_\mathrm{3d}^\gamma(y) 
 = \frac{12}{\gamma^4}\int_{\Omega_1} f^\gamma \, \mathrm{dist}^2\big((1+\gamma^2(\theta/3)^{1/2}\,x_3)(\partial_1y,\partial_2y,\gamma^{-1}\partial_3y),SO(3)\big) \dx,   
\end{align*} 
we can apply the above results for Kirchhoff plates (with $a = 0$) to the functionals $\gamma^2 E_\mathrm{3d}^\gamma$ to see that any sequence $(y_\gamma)$ with $E_\mathrm{3d}^\gamma(y_\gamma) \le C$ and hence $\gamma^2 E_\mathrm{3d}^\gamma(y_\gamma) \to 0$ converges -- up to subsequences -- to a limiting deformation $y$ whose second fundamental form vanishes on the two components $\Omega_1,\Omega_2$ of $\Omega \setminus \Sigma$. But then $y$ is a rigid motion on these sets, so there are $R_1,R_2\in SO(3)$ and $c_1,c_2 \in\R^3$ such that 
\begin{align}\label{eq:y-R1-R2}
 y(x) = 
   \begin{cases} 
    R_1 x + c_1 &\mbox{if } x \in \Omega_1, \\ 
    R_2 x + c_2 &\mbox{if } x \in \Omega_2.
   \end{cases} 
\end{align}
Moreover, as $y \in H^1(\Omega,\R^3)$ cannot jump along $\Sigma$, $R_1$ and $R_2$ are rank-1-connected and $\Sigma$ must be a straight line unless $R_1 = R_2$ and $c_1 = c_2$ in which cases there is no fold at all. If $R_1 \ne R_2$ with $R_2 - R_1 = a \otimes n$, then $n$ is a normal to $\Sigma$. 

With these observation one is led to consider renormalizations by substracting from $y_\gamma$ limiting deformations of the form \eqref{eq:y-R1-R2} rather than a single rigid motion as in \eqref{eq:y-normalize}. Since there is no energy cost caused by the crease, the problem completely decouples and one can apply the results above on both components separately. After descaling (cf.\  \eqref{eq:y-gamma-descaled} in the homogenous case) we see that deformations $y_\gamma$ are to leading order of the form 
\begin{align*}
 y_\gamma(x) = 
  R_i \Big( (x_1,x_2,\gamma x_3) + \sqrt{\theta} \gamma \, w(x_1,x_2) (0,0,1)\T \Big) + c_i + o(\gamma) \quad \mbox{for } x \in \Omega_i, \ i = 1,2.
\end{align*}
In particular, there is no energetical coupling between $w|_{\Omega_1}$ and $w|_{\Omega_2}$. 

Other natural scaling regimes for the damage parameter are obtained by choosing larger values for $\eps_\gamma$ such as in the bending regime above, i.e., $\gamma^2 \lesssim \eps_\gamma \ll \gamma$. While we expect that a $\Gamma$-convergence result can be proved under the assumption that there is a unique rigid body motion as in \eqref{eq:y-normalize} such that the rescaled relative in-plane and out-of-plane displacements \eqref{eq:full-displ} are convergent, we must observe now that such an assumption cannot be inferred from energy bounds. Indeed, exploratory computations indicate that if $R_i \in SO(3)$ and $c_i \in \R^3$ are optimal choices in \eqref{eq:y-normalize} on $\Omega_i$ $i = 1,2$, for $y = y_\gamma$, a bounded energy sequence, then 
\[ |R_2 - R_1|^2 \lesssim \eps_\gamma^{-1} \gamma^3 \]
and that configuartions with a folding angle scaling with $ \eps_\gamma^{-1/2} \gamma^{3/2}$ are possible but will lead to an extra `folding energy' contribution in the limit. A full analysis of this regime appears challenging and is beyond the scope of this contribution. 
}

\subsection{$P_1$-Finite Elements}\label{sec:P1}
In this section we introduce the finite element space used for the discretization of the in-plane component $u$. For a regular triangulation $\Th$ of the polygonal domain $\Omega\subset\mathbb{R}^2$, the standard $P1$ finite element space is defined as
\begin{align*}
\mathcal{S}^1(\Th) = \Bigl\{v_h\in C(\overline{\Omega})\,\Big|\, v_h|_T\text{ affine for all }T\in\Th\Bigr\}.
\end{align*}
If $\mathscr{N}_h$ denotes the set of nodes of the triangulation, the nodal basis functions $(\varphi_z)_{z\in\mathscr{N}_h}$ associated with $\mathcal{S}^1(\Th)$ satisfy the Kronecker delta property $\varphi_z(y)=\delta_{zy}$ for all $z,y\in\mathscr{N}_h$. We introduce the space of discontinuous $P1$ functions which is defined as
\begin{align*}
\widehat{\mathcal{S}}^1(\Th) = \Bigl\{v_h\in L^\infty(\Omega)\,\Big|\, v_h|_T\text{ affine for all }T\in\Th\Bigr\},
\end{align*}
along with the elementwise nodal interpolant $\widehat{\mathcal{I}}_hv\in\widehat{\mathcal{S}}^1(\Th)$ of piecewise continuous functions $v\in L^\infty(\Omega)$ to account for gradient jumps of discrete functions across element sides $S\in\mathscr{S}_h$. In particular, if $\varphi_z^T\in L^\infty(\Omega)$ is the discontinuous function such that $\varphi_z^T(x)=\chi_T(x)\varphi_z(x)$ for all $x\in\Omega$, we define
\begin{align*}
\widehat{\mathcal{I}}_hv = \sum_{T\in\Th}\sum_{z\in\mathscr{N}_h\cap\,T} v|_T(z)\varphi_z^T.
\end{align*}
The interpolator is used to approximate the $L^2$ inner product of piecewise continuous functions or vector fields $v,w\in L^\infty(\overline{\Omega},\mathbb{R}^\ell)$ for $\ell=1,2$ and coefficients $\beta_z^T = \int_T \varphi_z\dx$ via
\begin{align}\label{discrete_product}
\bigl(v,w\bigr)_h = \int_\Omega \widehat{\mathcal{I}}_h\bigl[v\cdot w\bigr]\dx = \sum_{T\in\Th}\sum_{z\in\mathscr{N}_h\cap\,T} \beta_z^T v|_T(z)\cdot w|_T(z) .
\end{align}
Furthermore, we denote by $V_h = \mathcal{S}^1(\Th)\times \mathcal{S}^1(\Th)$ the set of continuous $P1$ vector fields.

\subsection{Discrete Kirchhoff Elements}\label{sec:dkt}
To approximate the vertical component $w$ of the Föppl-von Kármán model we employ an $H^2$-nonconforming finite element discretization. For a given regular triangulation $\Th$ of $\Omega$ the $H^1$-conforming finite element spaces $W_h\subseteq H^1(\Omega)$ and $\Theta_h\subseteq H^1(\Omega,\mathbb{R}^2)$ are defined as
\begin{align*}
	W_h&:=\Bigl\{w_h\in C(\overline{\Omega})\,\Big|\,w_h|_T\in P^{\text{red}}_3(T)\,\,\,\forall T\in\Th,\,\nabla w_h\text{ is continuous at all }z\in\mathscr{N}_h\Bigr\},\\
	\Theta_h&:=\Bigl\{\theta_h\in C(\overline{\Omega},\mathbb{R}^2)\,\Big|\,\theta_h|_T\in P_2(T)^2\,\,\,\forall T\in\Th\Bigr\}.
\end{align*}
The space $P_k(T)$ denotes the set of polynomials with total degree smaller than or equal to $k\geq0$ restricted to the element $T\in\Th$. Furthermore, in the definition of
\begin{align*}
	P^{\text{red}}_3(T):=\Bigl\{p\in P_3(T)\,\Big|\,p(x_T)=\frac{1}{3}\sum_{z\in\mathscr{N}_h\cap \,T}\bigl(p(z)+\nabla p(z)\cdot(x_T-z)\bigr)\Bigr\},
\end{align*}
the degree of freedom in the center of mass $x_T=(1/3)\,\sum_{z\in\mathscr{N}_h\cap\,T}z$ of the triangle $T$ is eliminated. The remaining degrees of freedom of the space $W_h$ are given by the function values and the two partial derivatives at each vertex of the elements $T\in\Th$. This property is especially useful for the practical realization of the discrete model.

To relate the finite element spaces $W_h$ and $\Theta_h$ we introduce the operator $\nabla_h:W_h\to\Theta_h$ which defines a discretization of the deformation gradient. The property $\nabla_h w_h\in\Theta_h\subseteq H^1(\Omega,\mathbb{R}^2)$ for functions $w_h\in W_h$ ensures that second order derivatives can be discretized via the operation $\nabla\nabla_h w_h$.

\begin{defn}[See {\cite[Def. 8.6]{nonlinear_bartels}}]\thlabel{gradop}
For every $w_h\in W_h$ the \textit{discrete gradient operator} $\nabla_h:W_h\to\Theta_h$ is the uniquely defined function $\theta_h=\nabla_hw_h$ such that
\begin{align*}
\theta_h(z)&=\nabla w_h(z)&&\quad\forall z\in\mathscr{N}_h,\\
\theta_h(z_S)\cdot n_S&=\frac{1}{2}\,\bigl(\nabla w_h(z_S^1)+\nabla w_h(z_S^2)\bigr)\cdot n_S&&\quad\forall S\in\mathscr{S}_h,\\[2pt]
\theta_h(z_S)\cdot t_S&=\nabla w_h(z_S)\cdot t_S&&\quad\forall S\in\mathscr{S}_h.
\end{align*}
For every side $S\in\mathscr{S}_h$ we denote by $z_S^1,\,z_S^2\in\mathscr{N}_h$ its endpoints, by $z_S=(z_S^1+z_S^2)/2$ its midpoint and by $n_S,\,t_S\in\mathbb{R}^2$ the orthonormal vectors such that $n_S$ is normal to $S$.
\end{defn}

\begin{rmk}\thlabel{dkt_interpol}
The discrete gradient operator can be applied to functions $w\in H^3(\Omega)$ by defining a nodal interpolant $\mathcal{I}_h^\mathrm{dkt}w\in W_h$ via the conditions $\mathcal{I}_h^\mathrm{dkt}w(z)=w(z)$ and $\nabla \mathcal{I}_h^\mathrm{dkt} w(z)=\nabla w(z)$ for every node $z\in\mathscr{N}_h$. In particular, we have that $\nabla_hw=\nabla_h\mathcal{I}_h^\mathrm{dkt}w$.
\end{rmk}

As a direct consequence of the above definition we have for every $S\in\mathscr{S}_h$ that
\begin{align}
	\begin{split}\label{nablahw}
	\nabla_h w_h(z_S)&=\vphantom{\frac{1}{2}}\bigl(\nabla_h w_h(z_S)\cdot n_S\bigr)\,n_S+\bigl(\nabla_h w_h(z_S)\cdot t_S\bigr)\,t_S\\
	&=\bigl(\theta_h(z_S)\cdot n_S\bigr)\,n_S+\bigl(\theta_h(z_S)\cdot t_S\bigr)\,t_S\\
	&=\frac{1}{2}\,\bigl((\nabla w_h(z_S^1)+\nabla w_h(z_S^2))\cdot n_S\bigr)\,n_S+\bigl(\nabla w_h(z_S)\cdot t_S\bigr)\,t_S,
	\end{split}
\end{align}
where we split $\nabla_h w_h(z_S)$ in its orthogonal parts, namely the parts in the directions $n_S$ and $t_S$.
The discrete gradient operator possesses the following useful properties, cf.~\cite{nonlinear_bartels,braess}.

\begin{lem}[See {\cite[Lem. 8.1]{nonlinear_bartels}}]\thlabel{dkt_properties}(i) There exists a constant $c>0$ such that for every $w_h\in W_h$ and $T\in\Th$ we have for $\ell=0,1$ with $\nabla^0=I$ that
\begin{align*}
	c^{-1}\,||\nabla^{\ell+1}w_h||_{L^2(T)}\leq||\nabla^{\ell}\nabla_hw_h||_{L^2(T)}\leq c\,||\nabla^{\ell+1}w_h||_{L^2(T)}.
\end{align*}
(ii) There exists a constant $c>0$ such that for every $w\in H^3(\Omega)$ and $T\in\Th$ we have
\[
||\nabla_hw-\nabla w||_{L^2(T)}+h_T||\nabla\nabla_hw-D^2 w||_{L^2(T)}\leq ch_T^2||D^3w||_{L^2(T)}.
\]
(iii) There exists a constant $c>0$ such that for $w_h\in W_h$ and $2\leq p\leq \infty$ there holds
\begin{align*}
	||\nabla_hw_h-\nabla w_h||_{L^{^p}(T)}\leq c h_T ||D^2w_h||_{L^{^p}(T)},
\end{align*}
for every $T\in\Th$ with $h_T:=\text{diam}(T)$.
\item[(iv)] The mapping $w_h\rightarrow||\nabla\nabla_hw_h||$ defines a norm on the sets 
\begin{align*}
W_{0,h}^\mathrm{clamped}&:=\Bigl\{w_h\in W_h\,\Big|\,w_h(z)=0\,\text{ and }\,\nabla w_h(z)=0\,\,\,\forall z\in\mathscr{N}_h\cap\gamma_D\Bigr\},\\
W_{0,h}^\mathrm{simple}&:=\Bigl\{w_h\in W_h\,\Big|\,w_h(z)=0\,\,\,\forall z\in\mathscr{N}_h\cap\partial\Omega\Bigr\}.
\end{align*}
For $w_h\in W_{0,h}^\mathrm{clamped}$ we have $w_h|_{\gamma_D}=\nabla w_h|_{\gamma_D}=0$ and $\widetilde{w}_h\in W_{0,h}^\mathrm{simple}$ satisfies $\widetilde{w}_h|_{\partial\Omega}=0$.
\end{lem}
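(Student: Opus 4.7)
The plan is to establish each of the four assertions by reducing them to finite-dimensional arguments on a reference triangle $\hat T$ combined with standard affine scaling, exploiting that $\nabla_h$ is entirely determined by the nodal Hermite data $(w_h(z),\nabla w_h(z))_{z\in\mathscr{N}_h\cap T}$ through the rules in Definition \ref{gradop}.

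For part (i), I would pull $T$ back to $\hat T$ and observe that on the finite-dimensional space $P_3^{\text{red}}(\hat T)$ both $\hat w_h \mapsto \|\nabla^{\ell+1}\hat w_h\|_{L^2(\hat T)}$ and $\hat w_h \mapsto \|\nabla^\ell \nabla_h \hat w_h\|_{L^2(\hat T)}$ are seminorms with a common kernel (constants for $\ell = 0$, affine functions for $\ell = 1$: constants produce zero nodal gradients and hence $\nabla_h \hat w_h \equiv 0$, while an affine $\hat w_h$ has a constant gradient whose DKT interpolant in $\Theta_h$ is itself constant, so $\nabla \nabla_h \hat w_h = 0$). Seminorm equivalence on a finite-dimensional space with a shared kernel plus the usual affine scaling identities then yields the assertion on $T$.

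For part (ii), the crucial observation is that $\nabla_h$ reproduces gradients of quadratic polynomials exactly: if $w \in P_2(T)$ then $\nabla w$ is affine, so the midpoint normal averages and tangential values in Definition \ref{gradop} coincide with nodal interpolation of that affine field, giving $\nabla_h \mathcal{I}_h^{\text{dkt}} w = \nabla w$. A Bramble--Hilbert argument on $\hat T$ combined with affine scaling then delivers the claimed $h_T$ and $h_T^2$ bounds. Part (iii) follows in the same spirit: the difference $\delta_h := \nabla_h w_h - \nabla w_h$ lies in $P_2(T)^2$ and vanishes at the three vertices of $T$, so on $\hat T$ the norm $\|\hat\delta_h\|_{L^p(\hat T)}$ is controlled by $\|\nabla \hat\delta_h\|_{L^p(\hat T)}$ (finite-dimensional, trivial kernel), which by part (i) with $\ell=1$ and scaling is controlled by $h_T \|D^2 w_h\|_{L^p(T)}$.

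Part (iv) I expect to be the main technical obstacle. By (i) with $\ell=1$, $\|\nabla \nabla_h w_h\| = 0$ forces $D^2 w_h \equiv 0$ elementwise, and since $w_h \in W_h \subset H^1(\Omega)$ the function must be globally affine on $\Omega$. Under each set of boundary conditions one then shows the affine function is trivial: for $W_{0,h}^{\text{clamped}}$ the data $w_h(z) = \nabla w_h(z) = 0$ at any single $z \in \gamma_D$ already kills it; for $W_{0,h}^{\text{simple}}$ with $\partial\Omega$ polygonal there are three non-collinear boundary nodes, again forcing $w_h \equiv 0$. The trace statements require exploiting the Hermite structure of $P_3^{\text{red}}$ on a boundary edge $S$: the restriction $w_h|_S$ is a univariate cubic in arc-length determined by four conditions (value and tangential derivative at each endpoint), which vanish on $\gamma_D$; similarly $\nabla w_h \cdot n_S$ is affine in arc-length and determined by its two endpoint values, which also vanish. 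Carefully tracking these edge-wise Hermite constraints on the reduced cubic space, and verifying that the elimination of the centroid degree of freedom does not disturb them, is the step that demands the most care.
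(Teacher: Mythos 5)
The paper does not actually prove this lemma; it cites \cite[Lem.~8.1]{nonlinear_bartels}, so your proposal has to be judged on its own merits. Parts (i)--(iii) are essentially the standard arguments and are sound: the kernels of the two seminorms in (i) really do coincide (for $\ell=1$, note that $\nabla_h w_h\equiv\mathrm{const}$ forces $w_h$ affine after subtracting the linear part and invoking the $\ell=0$ case); $\nabla_h$ does reproduce gradients of quadratics in the sense you describe, even though $P_2(T)\not\subset P^{\text{red}}_3(T)$ for the centroid constraint as written --- what saves you is that only the \emph{edge restrictions} of $\mathcal{I}_h^{\mathrm{dkt}}w$ enter the definition of $\theta_h$, and these agree with those of $w$; and the Poincar\'e-type bound for $\delta_h\in P_2(T)^2$ vanishing at the vertices gives (iii). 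The one technical point you should not gloss over in (i) is that $\nabla_h$ is \emph{not} affine-equivalent, since its defining conditions use the physical unit vectors $n_S,t_S$; the equivalence constants must therefore come from a shape-regularity/compactness argument rather than a literal pullback of the operator to $\hat T$.

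The genuine gap is in the trace statement of (iv). You assert that $\nabla w_h\cdot n_S$ is affine in arc length along a boundary edge and hence determined by its two endpoint values. This is false: for $w_h|_T\in P^{\text{red}}_3(T)$ the normal derivative along an edge is a \emph{quadratic} in arc length whose interior behaviour is not controlled by the vertex data --- this is precisely why the (reduced) cubic Hermite triangle is not $C^1$-conforming. Concretely, on the reference triangle with edge $S=[0,1]\times\{0\}$, the function $q(x,y)=2xy-2x^2y-xy^2$ satisfies the centroid constraint defining $P^{\text{red}}_3$, has $q=0$ and $\nabla q=0$ at both endpoints of $S$, yet $\partial_y q|_{y=0}=2x(1-x)\not\equiv0$; the elimination of the centroid degree of freedom does not repair this. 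The endpoint conditions kill $w_h|_S$ and its tangential derivative, but not the normal derivative. What is true, and what the DKT method actually uses, is that the \emph{discrete} gradient vanishes on $\gamma_D$: $\nabla_h w_h|_S\in P_2(S)^2$ is determined by its endpoint values (zero), its midpoint normal component (the average of the endpoint normal components, hence zero), and its midpoint tangential component (the derivative of the vanishing edge cubic, hence zero). Your argument should be recast in terms of $\nabla_h w_h$, and the literal claim $\nabla w_h|_{\gamma_D}=0$ has to be read in that discrete sense. Finally, in the first half of (iv), ``$D^2w_h=0$ elementwise plus continuity'' does not by itself make $w_h$ globally affine; you additionally need that $\nabla_h w_h$ is one global constant, which pins down the common gradient at every node.
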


\section{Energy Decreasing Iteration}
This section deals with the analysis of the discrete gradient flow with regard to well-posedness, unconditional stability and monotone energy decay of the resulting iterates.

\begin{alg}[Decoupled gradient flow]\label{decoupled_grad_flow}
Specify an initial configuration $(u^0,w^0)\in\mathcal{A}$, an initial step size $\tau_1>0$, a stopping tolerance $\varepsilon_\mathrm{stop}>0$ and set $k=1$.\\[3pt]
(1) Compute $(u^k,w^k)\in\mathcal{A}$ such that
\begin{align*}
	\bigl(d_t w^k,v\bigr)_\mathrm{ver} = & -\bigl(D^2w^k - \alpha I , D^2v\bigr) - 2 \theta \bigl(|\nabla w^k|^2 \nabla w^k + \widetilde{\eps}(u^{k-1}) \nabla w^{k-1/2} , \nabla v\bigr) + \bigl(f,v\bigr),\\
	\bigl(d_t u^k,z\bigr)_\mathrm{hor}	= & -\theta \bigl(\widetilde{\eps}(u^k) , \widetilde{\eps}(z)\bigr) - \theta \bigl(\nabla w^k \otimes \nabla w^k , \widetilde{\eps}(z)\bigr),
\end{align*}
for all $(z,v)\in\mathcal{A}_0$.\\[3pt]
(2) Terminate the algorithm if $||d_t u^k||_\mathrm{hor}+||d_t w^k||_\mathrm{ver}\leq\varepsilon_\mathrm{stop}\min\{1,\tau_k\}$. Otherwise, define $\tau_{k+1}$, set $k\to k+1$ and continue with Step (1).
\end{alg}

The algorithm admits a sequence $(u^k,w^k)_{k\geq0}\subset\mathcal{A}$ that decreases the elastic energy.
For simplicity we assume from now on that $f=0$.

\begin{lem}[Energy decay]\label{lem:energy}
Iterates $(u^k,w^k)_{k\geq0}\subset\mathcal{A}$ of Algorithm \ref{decoupled_grad_flow} satisfy 
\[
E^\theta(u^k,w^k)+\sum_{k=1}^K\tau_k\Bigl(||d_tw^k||^2_\mathrm{ver}+||d_tu^k||^2_\mathrm{hor}\Bigr)\leq E^\theta(u^0,w^0).
\]
This implies that the updates $d_tu^k$ and $d_tw^k$ vanish as $k\to\infty$. Cluster points of the sequence $(u^k,w^k)$ thus become stationary configurations for the elastic energy $E^\theta$. 
\end{lem}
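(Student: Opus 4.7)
The plan is to establish the one-step decay
\begin{align*}
 E^\theta(u^k,w^k) - E^\theta(u^{k-1},w^{k-1}) \leq -\tau_k\bigl(\|d_tw^k\|^2_\mathrm{ver} + \|d_tu^k\|^2_\mathrm{hor}\bigr)
\end{align*}
and then sum over $k=1,\dots,K$. Since Algorithm~\ref{decoupled_grad_flow} is decoupled (first $w^k$ is computed from $(u^{k-1},w^{k-1})$, then $u^k$ from $(u^{k-1},w^k)$), I split the energy change via the intermediate state $(u^{k-1},w^k)$:
\begin{align*}
 E^\theta(u^k,w^k) - E^\theta(u^{k-1},w^{k-1})
 = \underbrace{\bigl[E^\theta(u^k,w^k) - E^\theta(u^{k-1},w^k)\bigr]}_{=:\,\Delta_u}
 + \underbrace{\bigl[E^\theta(u^{k-1},w^k) - E^\theta(u^{k-1},w^{k-1})\bigr]}_{=:\,\Delta_w},
\end{align*}
and bound each contribution separately.

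For $\Delta_u$ only the $u$-dependent coupling energy $\tfrac{\theta}{2}\|\nabla w^k\otimes\nabla w^k + \widetilde{\eps}(u)\|^2$ changes. Applying the standard quadratic identity $\|a\|^2-\|b\|^2 = 2(a,a-b) - \|a-b\|^2$ with $a-b = \widetilde{\eps}(u^k-u^{k-1})$ and then testing the $u$-equation with $z = u^k-u^{k-1} = \tau_k d_tu^k$, the implicit RHS terms cancel exactly against $2(a, a-b)$, giving $\Delta_u = -\tau_k\|d_tu^k\|^2_\mathrm{hor} - \tfrac{\theta}{2}\|\widetilde{\eps}(u^k-u^{k-1})\|^2 \leq -\tau_k\|d_tu^k\|^2_\mathrm{hor}$.

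For $\Delta_w$ I test the $w$-equation with $v = w^k-w^{k-1} = \tau_k d_tw^k$ and match its three right-hand side terms to the bending, quartic, and bilinear coupling contributions of $-\Delta_w$. The bending term is handled by the same quadratic identity applied to $D^2w^k - \alpha I$, producing the desired bending energy difference up to the nonnegative remainder $\tfrac{1}{2}\|D^2(w^k-w^{k-1})\|^2$. The averaged coupling term is precisely where the choice $\nabla w^{k-1/2} = \tfrac12(\nabla w^k + \nabla w^{k-1})$ pays off: combining the algebraic identity
\begin{align*}
 \nabla w^k\otimes\nabla w^k - \nabla w^{k-1}\otimes\nabla w^{k-1}
 = \nabla(w^k{-}w^{k-1}) \otimes \nabla w^{k-1/2} + \nabla w^{k-1/2} \otimes \nabla(w^k{-}w^{k-1})
\end{align*}
with the symmetry of $\widetilde{\eps}(u^{k-1})$ yields an exact match with the bilinear part of the energy difference.

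The main obstacle is the purely quartic, fully implicit contribution $2\bigl(|\nabla w^k|^2\nabla w^k,\, \nabla(w^k-w^{k-1})\bigr)$, for which no averaging is available and which does not telescope on the nose. My plan is to invoke convexity of the scalar function $\phi(a) = \tfrac{1}{4}|a|^4$ on $\mathbb{R}^2$, whose gradient is $\nabla\phi(a) = |a|^2 a$; the supporting hyperplane inequality yields pointwise
\begin{align*}
 |\nabla w^k|^2\nabla w^k \cdot \nabla(w^k-w^{k-1}) \geq \phi(\nabla w^k) - \phi(\nabla w^{k-1}) = \tfrac{1}{4}\bigl(|\nabla w^k|^4 - |\nabla w^{k-1}|^4\bigr),
\end{align*}
so the integrated quartic right-hand side dominates $\tfrac{\theta}{2}(\|\nabla w^k\|_{L^4}^4 - \|\nabla w^{k-1}\|_{L^4}^4)$. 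Summing the three estimates reconstructs $-\Delta_w$ plus nonnegative slack, hence $\Delta_w \leq -\tau_k\|d_tw^k\|^2_\mathrm{ver}$. Adding $\Delta_u$ and $\Delta_w$ and telescoping over $k=1,\dots,K$ delivers the asserted inequality. Finally, the resulting summability of $\tau_k(\|d_tw^k\|^2_\mathrm{ver} + \|d_tu^k\|^2_\mathrm{hor})$ together with the lower bound on the step sizes enforced by the adaptive scheme forces the updates to vanish, and passing to the limit in the gradient-flow equations identifies cluster points of $(u^k,w^k)$ as stationary configurations of $E^\theta$, i.e., $\partial_u E^\theta = \partial_w E^\theta = 0$.
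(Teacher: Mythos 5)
Your proof is correct and uses the same core ingredients as the paper: testing both equations with the difference quotients $\tau_k d_t u^k$ and $\tau_k d_t w^k$, the binomial/quadratic identity for the bending and membrane contributions, the midpoint identity $\nabla w^k\otimes\nabla w^k - \nabla w^{k-1}\otimes\nabla w^{k-1} = \nabla(w^k{-}w^{k-1})\otimes\nabla w^{k-1/2} + \nabla w^{k-1/2}\otimes\nabla(w^k{-}w^{k-1})$ for the averaged coupling term, convexity of $a\mapsto\tfrac14|a|^4$ for the fully implicit quartic term, and dropping nonnegative remainders before telescoping. The only cosmetic difference is that you organize the one-step estimate by explicitly inserting the intermediate state $(u^{k-1},w^k)$ and splitting into $\Delta_u+\Delta_w$, whereas the paper sums the two tested equations and rearranges using the discrete product rule $d_t(a^kb^k)=(d_ta^k)b^k+a^{k-1}(d_tb^k)$ directly -- the underlying calculus is identical.
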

\begin{proof}
Testing the first and second equation of the iterative scheme of Algorithm \ref{decoupled_grad_flow} with $v=d_tw^k$ and $z=d_tu^k$, respectively, shows that
\begin{align*}
	||d_t w^k||_\text{ver}^2 = & -\bigl(D^2w^k - \alpha I , D^2 d_t w^k\bigr) - 2 \theta \bigl(|\nabla w^k|^2 \nabla w^k + \widetilde{\eps}(u^{k-1}) \nabla w^{k-1/2} , \nabla d_t w^k\bigr),\\ 
	||d_t u^k||_\text{hor}^2	= & -\theta \bigl(\widetilde{\eps}(u^k) , \widetilde{\eps}(d_t u^k)\bigr) - \theta \bigl(\nabla w^k \otimes \nabla w^k , \widetilde{\eps}(d_t u^k)\bigr).
\end{align*}
The discrete product rule $d_t(a^kb^k)=(d_ta^k)b^k+a^{k-1}(d_tb^k)$ implies that we have
\[
\theta d_t \bigl(\widetilde{\eps}(u^k),\nabla w^k \otimes \nabla w^k\bigr) = \theta \bigl(\widetilde{\eps}(d_t u^k),\nabla w^k \otimes \nabla w^k\bigr) + \theta \bigl(\widetilde{\eps}(u^{k-1}), d_t(\nabla w^k \otimes \nabla w^k)\bigr),
\]
whereas the binomial formula $d_t|a^k|^2=2d_ta^k\cdot a^{k-1/2}$ yields 
\begin{align*}
\theta \bigl(\widetilde{\eps}(u^{k-1}),d_t(\nabla w^k \otimes \nabla w^k)\bigr) = 2 \theta \bigl(\widetilde{\eps}(u^{k-1}) \nabla w^{k-1/2} , \nabla d_t w^k\bigr).
\end{align*}
Combining the two equations, we arrive at
\[
\theta d_t \bigl(\widetilde{\eps}(u^k),\nabla w^k \otimes \nabla w^k\bigr) = \theta \bigl(\widetilde{\eps}(d_t u^k),\nabla w^k \otimes \nabla w^k\bigr) + 2 \theta \bigl(\widetilde{\eps}(u^{k-1}) \nabla w^{k-1/2} , \nabla d_t w^k\bigr).
\]
Since the backwards difference quotient is invariant under additive constants, we may incorporate the scaled identity matrix $\alpha I$ into the sequence $(D^2d_tw^k)_{k\geq0}$, in the sense that
\[
D^2 d_t w^k = d_t D^2 w^k = d_t(D^2w^k-\alpha I).
\]
By combining the above equation with the binomial formula $2d_ta^k\cdot a^k=d_t|a^k|^2+\tau_k|d_ta^k|^2$ we find that
\begin{align*}
\bigl(D^2w^k-\alpha I,D^2d_tw^k\bigr) &=  \bigl(D^2w^k-\alpha I,d_t(D^2w^k-\alpha I)\bigr)\\
&=\frac{1}{2}\bigl(d_t||D^2w^k-\alpha I||^2+\tau_k||d_t(D^2w^k-\alpha I)||^2\bigr).
\end{align*}
A summation of the discrete evolution equations and the convexity of $|\nabla w|^4$ show that
\begin{align*}
||d_tw^k||_\text{ver}^2&+\frac{1}{2}\bigl(d_t||D^2w^k-\alpha I||^2+\tau_k||d_t(D^2w^k-\alpha I)||^2\bigr)\\[-5pt]
&\hspace{2cm}+||d_tu^k||_\text{hor}^2+\frac{\theta}{2}\bigl(d_t||\widetilde{\eps}(u^k)||^2+\tau_k||\widetilde{\eps}(d_tu^k)||^2\bigr)\\[2pt]
&= -2 \theta \bigl(\widetilde{\eps}(u^{k-1}) \nabla w^{k-1/2} , \nabla d_t w^k\bigr) - \theta \bigl(\widetilde{\eps}(d_t u^k),\nabla w^k \otimes \nabla w^k\bigr)\\[2pt]
&\hspace{2cm} - 2\theta\bigl(|\nabla w^k|^2\nabla w^k,\nabla d_tw^k\bigr)\\[-2pt]
&\leq - \theta d_t \bigl(\widetilde{\eps}(u^k),\nabla w^k \otimes \nabla w^k\bigr) -\frac{\theta d_t}{2} \int_\Omega |\nabla w^k|^4\dx.
\end{align*}
Using that $\tau_k||d_t(D^2w^k-\alpha I)||^2+\tau_k||d_t\widetilde{\eps}(u^k)||^2\geq0$ and multiplying the above equation by $\tau_k$, a summation over $k=1,2,\ldots,K$ yields the desired estimate. 
\end{proof}

The iteration is unconditionally stable, but chosing large step sizes might lead to non-uniqueness of solutions. A mild condition on $\tau_k$ ensures well-posedness of Algorithm \ref{decoupled_grad_flow}.

\begin{prop}[Uniqueness of minimizers]\label{prop:uniqueness}
The iterates $(u^k,w^k)_{k\geq1}\subset\mathcal{A}$ of Algorithm \ref{decoupled_grad_flow} are unique, provided that $||D^2\cdot||\leq c_\text{eq}||\cdot||_\mathrm{ver}$ and
\[
\tau_k\leq\frac{1}{2c_0c_\text{eq}^2c_S^2}
\]
for every $k\geq1$ with a constant $c_0$ depening on $\theta,c_S,w_D$ and $E_0=E^\theta(u_0,w_0)$.
\end{prop}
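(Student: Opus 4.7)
The plan is to exploit the decoupled structure of the scheme: the $w$-equation does not involve $u^k$, so it defines $w^k$ on its own; once $w^k$ is fixed, the $u$-equation is linear in $u^k$. Thus uniqueness may be proved in two independent steps.

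First I would handle the $u$-update. Given $w^k$ the second equation is linear in $u^k$ and corresponds to the bilinear form $(z_1,z_2) \mapsto \tau_k^{-1}(z_1,z_2)_\mathrm{hor}+\theta(\widetilde\eps(z_1),\widetilde\eps(z_2))$ on $\mathcal{A}_0$. This form is symmetric, continuous and, by the Korn--Poincaré inequality \eqref{korn_poincare}, coercive on $\mathcal{A}_0$, so uniqueness (and existence) of $u^k\in\mathcal{A}$ is immediate from Lax--Milgram, independently of the size of $\tau_k$.

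The main work, and the place where the step-size condition is needed, is uniqueness of $w^k$. Assume $w_1,w_2\in\mathcal{A}$ both satisfy the first equation with the prescribed data $(u^{k-1},w^{k-1})$ and set $e=w_1-w_2\in\mathcal{A}_0$. Subtracting the two equations and testing with $v=e$, the $\alpha I$ and $w^{k-1}$ contributions cancel, and the $\widetilde\eps(u^{k-1})\nabla w^{k-1/2}$ term yields $\theta(\widetilde\eps(u^{k-1})\nabla e,\nabla e)$ after the factor $1/2$ from the average. Using that $\xi\mapsto|\xi|^2\xi$ is monotone (as the gradient of the convex map $\xi\mapsto\frac14|\xi|^4$), the cubic contribution is non-negative and can be dropped. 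This leaves
\begin{align*}
\tfrac{1}{\tau_k}\|e\|_\mathrm{ver}^2+\|D^2 e\|^2
\leq \theta\,\|\widetilde\eps(u^{k-1})\|\,\|\nabla e\|_{L^4(\Omega)}^2.
\end{align*}
Applying the Sobolev inequality \eqref{sobolev} to bound $\|\nabla e\|_{L^4}\le c_S\|D^2 e\|$ and the assumed equivalence $\|D^2 e\|\le c_\text{eq}\|e\|_\mathrm{ver}$, the right-hand side is controlled by $\theta c_S^2\|\widetilde\eps(u^{k-1})\| c_\text{eq}^2\|e\|_\mathrm{ver}^2$. Choosing the constant $c_0$ so that $\theta\|\widetilde\eps(u^{k-1})\|\le c_0/2$ for all $k$ then produces a strict inequality under the hypothesis $\tau_k\le(2c_0 c_\text{eq}^2 c_S^2)^{-1}$, forcing $\|e\|_\mathrm{ver}=0$ and hence $w_1=w_2$.

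The principal obstacle is to verify that the bound $\|\widetilde\eps(u^{k-1})\|\le c_0/(2\theta)$ can be achieved with a constant $c_0$ that depends only on $\theta,c_S,w_D$ and $E_0=E^\theta(u^0,w^0)$, uniformly in $k$. This is exactly where the energy-decay estimate of Lemma~\ref{lem:energy} enters: from $E^\theta(u^{k-1},w^{k-1})\le E_0$ one extracts $\|D^2 w^{k-1}-\alpha I\|\le\sqrt{2E_0}$ and $\|\nabla w^{k-1}\otimes\nabla w^{k-1}+\widetilde\eps(u^{k-1})\|\le\sqrt{2E_0/\theta}$. Combining these with the triangle inequality, the Sobolev bound \eqref{sobolev} applied to $w^{k-1}-w_D$, and the norm $\|\cdot\|$ of $D^2 w_D$ and $\nabla w_D\otimes\nabla w_D$, one obtains $\|\widetilde\eps(u^{k-1})\|\le\|\nabla w^{k-1}\|_{L^4}^2+\sqrt{2E_0/\theta}$, which is bounded by a quantity depending only on $\theta,c_S,w_D,E_0$. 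Defining $c_0$ as twice this bound times $\theta$ concludes the argument.
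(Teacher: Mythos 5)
Your argument is correct and rests on the same three pillars as the paper's: the a priori bound $\|\widetilde{\eps}(u^{k-1})\|\leq c_0$ extracted from the energy decay of Lemma~\ref{lem:energy} via the Sobolev inequality \eqref{sobolev}, the convexity of $\xi\mapsto\tfrac14|\xi|^4$, and absorption of the indefinite coupling term $\theta(\widetilde{\eps}(u^{k-1})\nabla e,\nabla e)$ into $\tau_k^{-1}\|e\|_{\mathrm{ver}}^2$ using $\|D^2\cdot\|\leq c_{\mathrm{eq}}\|\cdot\|_{\mathrm{ver}}$ under the stated step-size restriction. Where you differ is in the packaging of the $w$-step: you subtract the Euler--Lagrange equations for two putative solutions $w_1,w_2$ and run a monotonicity estimate on $e=w_1-w_2$ directly, whereas the paper recognizes the $w$-equation as the optimality condition of an auxiliary functional $G$ and shows that $G$ is strongly convex, so that its minimizer (and hence its critical point) is unique. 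The two are essentially dual views of the same estimate -- strong convexity of $G$ is strong monotonicity of $G'$ -- but the paper's variational formulation buys existence of $w^k$ for free via the direct method (lower semicontinuity and coercivity of $G$), which your route does not address; since the proposition only asserts uniqueness, this is not a gap. Your correct observation that the scheme is triangular (the $w^k$-equation involves only $u^{k-1},w^{k-1}$, after which the $u^k$-equation is linear and handled by Lax--Milgram) matches the paper's treatment, and your normalization of $c_0$ differs from the paper's only by harmless factors of $\theta$ and $2$, which is consistent with the statement's flexibility about the constant.
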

\begin{proof}
The linear equation defining $u^k$ admits a unique solution by the Lax--Milgram lemma. The nonlinear equation with respect to $w^k$ defines an optimality condition for the minimization problem $\min_{w\in W}G(w)$, where the functional $G$ is defined as
\begin{align*}
G(w) &= \frac{1}{2\tau_k}||w-w^{k-1}||_\text{ver}^2+\frac{1}{2}||D^2w-\alpha I||^2+\frac{\theta}{2}\int_\Omega|\nabla w|^4\dx\\
&\hspace{2cm}+\frac{\theta}{2}\int_\Omega\widetilde{\varepsilon}(u^{k-1}):\bigl[\nabla(w+w^{k-1})\otimes\nabla(w+w^{k-1})\bigr]\dx.
\end{align*}
The existence of a minimizer $w^k$ follows by lower semicontinuity and coercivity on $H^2(\Omega)$. Lemma \ref{lem:energy} implies that $\theta||\widetilde{\varepsilon}(u^k)+\nabla w^k\otimes \nabla w^k||^2\leq 2E^\theta(u^k,w^k)\leq 2E_0$. We show that $||\widetilde{\varepsilon}(u^k)||\leq c_0$ with a constant $c_0$ defined below. Adding and subtracting $\nabla w^k\otimes \nabla w^k$ and $\nabla w_D$, the Sobolev inequality \eqref{sobolev} and the formula $(x+y)^2\leq 2x^2+2y^2$ yield
\begin{align*}
||\widetilde{\varepsilon}(u^k)|| &\leq ||\widetilde{\varepsilon}(u^k)+\nabla w^k\otimes \nabla w^k||+||\nabla w^k||^2_{L^4(\Omega)}\\
&\leq \sqrt{2E_0/\theta}+2||\nabla(w^k-w_D)||^2_{L^4(\Omega)}+2||\nabla w_D||^2_{L^4(\Omega)}\\
&\leq \sqrt{2E_0/\theta}+2c_S^2||D^2(w^k-w_D)||^2+2||\nabla w_D||^2_{L^4(\Omega)}=: c_0.
\end{align*}
Due to the above estimate we find with similar arguments and the Hölder inequality that
\begin{align*}
\frac{\theta}{2}\int_\Omega \widetilde{\varepsilon}(u^{k-1}):\nabla w&\otimes\nabla w\dx\geq-\frac{\theta}{2}||\widetilde{\varepsilon}(u^{k-1})||\,||\nabla w||^2_{L^4(\Omega)}\\
&\geq -||\widetilde{\varepsilon}(u^{k-1})||\,||\nabla (w-w^{k-1})||^2_{L^4(\Omega)}-||\widetilde{\varepsilon}(u^{k-1})||\,||\nabla w^{k-1}||^2_{L^4(\Omega)}\\[2pt]
&\geq -c_0c_S^2||D^2(w-w^{k-1})||^2-c_0||\nabla w^{k-1}||^2_{L^4(\Omega)}.
\end{align*}
The second term on the right-hand side is independent of $w$ whereas the first term can be absorbed using $||D^2\cdot||\leq c_{eq}||\cdot||_\text{ver}$, provided that $\tau_k\leq1/(2c_0c_{eq}^2c_S^2)$. In particular, the functional $G$ is strongly convex which shows the uniqueness of the minimizer $w^k$.
\end{proof}

To deal with the nonlinear system of equations in Algorithm \ref{decoupled_grad_flow} we employ a Newton scheme. In the following we assume for simplicity that $||\cdot||_\text{ver}=||D^2\cdot||$.

\begin{prop}[Newton scheme]\thlabel{newton_scheme}
Solving the first equation of Algorithm \ref{decoupled_grad_flow} is equivalent to seeking $w^k\in W$ such that $F_k(w^k)[v]=0$ for all $v\in W$ with
\begin{align*}
F_k(w)[v]&=\bigl(D^2[w-w^{k-1}],D^2v\bigr)+\tau_k\bigl(D^2w-\alpha I,D^2v\bigr)\\
&\hspace{1cm}+\tau_k\theta\bigl(\bigl[2|\nabla w|^2+\widetilde{\varepsilon}(u^{k-1})\bigr]\nabla w+\widetilde{\varepsilon}(u^{k-1})\nabla w^{k-1},\nabla v \bigr).
\end{align*}
The Newton scheme converges quadratically for sufficiently small step sizes.
\end{prop}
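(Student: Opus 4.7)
The plan is two-fold: I would first verify the equivalence between the first equation of Algorithm~\ref{decoupled_grad_flow} and $F_k(w^k)[v]=0$ by direct algebraic manipulation, and then deduce quadratic convergence by invoking the standard local convergence theorem for Newton's method at the fixed point $w^k$.

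For the equivalence, since $\|\cdot\|_\text{ver} = \|D^2 \cdot\|$ is assumed here, the time derivative term reads $(d_t w^k, v)_\text{ver} = \tau_k^{-1}\bigl(D^2(w^k-w^{k-1}), D^2 v\bigr)$. Multiplying the evolution equation by $\tau_k$, setting $f=0$, and substituting the identity $2\nabla w^{k-1/2} = \nabla w^k + \nabla w^{k-1}$ produces the contributions $\tau_k(D^2 w^k - \alpha I, D^2 v)$ together with $\tau_k\theta\bigl(2|\nabla w^k|^2\nabla w^k + \widetilde{\eps}(u^{k-1})(\nabla w^k+\nabla w^{k-1}), \nabla v\bigr)$. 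Regrouping the $\widetilde{\eps}(u^{k-1})\nabla w^k$ contribution with the cubic term then reproduces exactly the bracket $[2|\nabla w^k|^2 + \widetilde{\eps}(u^{k-1})]\nabla w^k + \widetilde{\eps}(u^{k-1})\nabla w^{k-1}$ in the definition of $F_k$, establishing the equivalence.

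For the quadratic convergence, I would compute the Fréchet derivative
\begin{align*}
DF_k(w)[\tilde v, v] = (1+\tau_k)(D^2\tilde v, D^2 v) + \tau_k\theta\bigl(4(\nabla w\cdot\nabla\tilde v)\nabla w + 2|\nabla w|^2\nabla\tilde v + \widetilde{\eps}(u^{k-1})\nabla\tilde v, \nabla v\bigr).
\end{align*}
Setting $v=\tilde v$, the three quadratic and quartic integrands are pointwise nonnegative, while the indefinite coupling with $\widetilde{\eps}(u^{k-1})$ is controlled via Hölder's inequality and the Sobolev embedding \eqref{sobolev} by $c_0 c_S^2 \|D^2\tilde v\|^2$, with $\|\widetilde{\eps}(u^{k-1})\|\le c_0$ obtained exactly as in the proof of Proposition~\ref{prop:uniqueness}. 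For step sizes $\tau_k$ in the regime of Proposition~\ref{prop:uniqueness}, the negative contribution is absorbed and one finds $DF_k(w^k)[\tilde v,\tilde v]\ge c\,\|D^2\tilde v\|^2$, so that $DF_k(w^k)$ is an isomorphism on $W$. The second Fréchet derivative $D^2 F_k(w)$ depends polynomially (of degree at most two) on $\nabla w$ and is thus uniformly bounded on $H^2$-balls about $w^k$ via the embedding $H^2(\Omega)\hookrightarrow W^{1,p}(\Omega)$ for every finite $p$, which furnishes local Lipschitz continuity of $DF_k$.

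Combining coercivity of $DF_k(w^k)$ with local Lipschitz continuity of $DF_k$, the standard local convergence theorem for Newton's method applies and yields quadratic contraction of the iteration error for initial guesses sufficiently close to $w^k$. The main obstacle, as in Proposition~\ref{prop:uniqueness}, is ensuring that the indefinite $\widetilde{\eps}(u^{k-1})$ coupling cannot destroy coercivity of the linearization; this is precisely what forces the smallness condition on $\tau_k$, which is compatible with the one required for uniqueness.
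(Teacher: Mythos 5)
Your proof is correct and proceeds along essentially the same lines as the paper's: establish equivalence via $2\nabla w^{k-1/2}=\nabla w^k+\nabla w^{k-1}$, compute $DF_k$, obtain coercivity by Hölder's inequality and the $L^4$-Sobolev estimate \eqref{sobolev} for $\tau_k$ sufficiently small, and invoke the standard local theory for Newton's method. The only minor difference is that you retain the pointwise nonnegativity of the cubic and quartic contributions and bound only the indefinite $\widetilde{\varepsilon}(u^{k-1})$ coupling, whereas the paper absorbs the entire bracket $4\nabla w\otimes\nabla w+2|\nabla w|^2 I+\widetilde{\varepsilon}(u^{k-1})$ in one stroke; your variant yields a smallness condition that, unlike the paper's, is uniform in $w$, and you also spell out the Lipschitz continuity of $DF_k$ via $H^2\hookrightarrow W^{1,p}$ where the paper simply refers to ``continuity properties.''
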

\begin{proof}
The equivalence of the equation $F_k(w)[v]=0$ to the first step of Algorithm \ref{decoupled_grad_flow} follows by the definition of the average $w^{k-1/2}=(w^k+w^{k-1})/2$. Proposition \ref{prop:uniqueness} implies that there exists a unique solution $w^k\in W$. The Fréchet derivative of $F_k$ along $z\in W$ reads
\begin{align*}
F_k'(w)[v,z]&=(1+\tau_k)\bigl(D^2z,D^2v\bigr)+\tau_k\theta\bigl([4\nabla w\otimes\nabla w+2|\nabla w|^2+\widetilde{\varepsilon}(u^{k-1})]\nabla z,\nabla v\bigr).
\end{align*}
The coercivity of $F_k'(w)$ follows by noting that with Hölder's inequality we arrive at
\begin{align*}
F_k'(w)[v,v]\geq(1+\tau_k)||D^2v||^2-\tau_k\theta||4\nabla w\otimes\nabla w+2|\nabla w|^2I_2+\widetilde{\varepsilon}(u^{k-1})||\,||\nabla v||^2_{L^4(\Omega)}
\end{align*}
for every $v\in W$ provided that $\tau_k$ is sufficiently small. The quadratic convergence of the Newton scheme then follows by continuity properties of $F_k$ and its derivatives.
\end{proof}

\section{$\Gamma$-Convergence}

The validity of the discrete approximation of the dimensionally reduced elastic energy for the singlelayer model has been rigorously justified in \cite{bartels_fvk} by means of $\Gamma$-convergence. Additional arguments are required to apply the results to the bilayer model considered in this work. We omit dealing with the forcing term whose convergence analysis is standard and recap that the discrete energy reads
\begin{align*}
E^{\theta}_h(u_h,w_h) =  \frac{1}{2} \int_{\Omega} |\nabla\nabla_h w_h - \alpha I |^2  \dx + \frac{\theta}{2} \int_{\Omega} \widehat{\mathcal{I}}_h\bigl[|\nabla w_h \otimes \nabla w_h + \widetilde{\eps}(u_h)|^2\bigr] \dx 
\end{align*}
for functions $(u_h,w_h)\in V_h\times W_h$. We start by introducing a compactness result that holds for appropriate assumptions on the boundary conditions stated in Section \ref{sec:model_derivation}.

\begin{lem}[Compactness]\thlabel{lem:compactness}
Let $(u_h,w_h)_{h>0}\subset V_h\times W_h$ be a sequence such that
\[
u_h = u_{D,h}+u_{0,h},\quad w_h = w_{D,h}+w_{0,h},
\]
where $(u_{D,h},w_{D,h})\in V_h\times W_h$ satisfy $u_{D,h}\to u_D$ in $H^1(\Omega;\mathbb{R}^2)$ and $w_{D,h}\to w_D$ in $W^{1,4}(\Omega)$ as $h\to0$, and $(u_{0,h},w_{0,h})\in V_h\times W_h$ admit homogeneous boundary conditions, i.e.,
\begin{align*}
u_{0,h}|_{\gamma_D}=0,\quad w_{0,h}|_{\gamma_D}=0,\quad \nabla w_{0,h}|_{\gamma_D} = 0,
\end{align*}
on a subset $\gamma_D\subset\partial\Omega$ of positive surface measure or
\begin{align*}
u_{0,h}|_{\partial\Omega}=0,\quad w_{0,h}|_{\partial\Omega}=0,
\end{align*}
on the whole boundary $\partial\Omega$ of $\Omega$. If $E^\theta_h(u_h,w_h)\leq c_E$ for all $h>0$ we have that
\[
||u_h||_{H^1(\Omega;\mathbb{R}^2)} + ||w_h||_{W^{1,4}(\Omega)} \leq c.
\]
\end{lem}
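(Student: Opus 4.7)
The plan is to convert the two parts of the energy bound into separate norm bounds on $w_h$ and $u_h$: the bending term controls $\|\nabla\nabla_h w_h\|_{L^2}$ and hence (after discrete Sobolev embedding) the $W^{1,4}$-norm of $w_h$, while the shear term, combined with Korn--Poincar\'e, controls $\|u_h\|_{H^1}$.

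\textbf{Step 1 (bending term).} Dropping the nonnegative shear contribution in $E_h^{\theta}(u_h,w_h)\le c_E$ gives $\tfrac12\|\nabla\nabla_h w_h-\alpha I\|_{L^2}^2\le c_E$, hence $\|\nabla\nabla_h w_h\|_{L^2}\le c$. By the elementwise norm equivalence of Lemma~\ref{dkt_properties}(i), the broken norm $\|D^2 w_h\|_{L^2}$ is controlled accordingly.

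\textbf{Step 2 (discrete Sobolev bound for $w_h$).} Write $w_h=w_{D,h}+w_{0,h}$. By inspection of Definition~\ref{gradop}, in the clamped case $\nabla_h w_{0,h}$ vanishes at every nodal degree of freedom on $\gamma_D$ (the nodal values come from $\nabla w_{0,h}|_{\gamma_D}=0$, and at boundary midpoints the normal average and the tangential value vanish as well), so $\nabla_h w_{0,h}$ itself vanishes on $\gamma_D$. In the simple-support case one still obtains vanishing tangential trace of $\nabla_h w_{0,h}$, while Poincar\'e applied to $w_{0,h}\in H^1_0(\Omega)$ gives $\|w_{0,h}\|_{L^2}\le c\|\nabla w_{0,h}\|_{L^2}$. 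Either way, using $\nabla_h w_{0,h}\in H^1(\Omega;\mathbb{R}^2)$ together with the 2D embedding $H^1(\Omega)\hookrightarrow L^4(\Omega)$ and a Poincar\'e-type inequality yields $\|\nabla_h w_{0,h}\|_{L^4}\le c\|\nabla\nabla_h w_{0,h}\|_{L^2}$. Lemma~\ref{dkt_properties}(iii) combined with the standard polynomial inverse estimate bounds $\|\nabla w_{0,h}-\nabla_h w_{0,h}\|_{L^4}$ by $ch^{1/2}\|D^2 w_{0,h}\|_{L^2}$, which is uniformly bounded. Together with the assumption $\|w_{D,h}\|_{W^{1,4}}\le C$ we arrive at $\|w_h\|_{W^{1,4}}\le c$.

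\textbf{Step 3 (shear term $\Rightarrow$ symmetric gradient bound).} From $E_h^{\theta}\le c_E$,
\[
\int_\Omega \widehat{\mathcal{I}}_h\bigl[|\nabla w_h\otimes\nabla w_h+\widetilde{\eps}(u_h)|^2\bigr]\dx \le \tfrac{2c_E}{\theta}.
\]
The pointwise bound $|a+b|^2\ge\tfrac12|b|^2-|a|^2$ applied at every node, together with $|a\otimes a|^2=|a|^4$ and the fact that $\widetilde{\eps}(u_h)$ is piecewise constant, gives
\[
\tfrac12\|\widetilde{\eps}(u_h)\|_{L^2}^2 \le \tfrac{2c_E}{\theta} + \int_\Omega \widehat{\mathcal{I}}_h\bigl[|\nabla w_h|^4\bigr]\dx.
\]
On each element $T$, $\tfrac{|T|}{3}\sum_{z\in\mathcal{N}_h\cap T}|\nabla w_h(z)|^4\le \tfrac{|T|}{3}\|\nabla w_h\|_{L^\infty(T)}^4$, and the inverse estimate applied to the polynomial $\nabla w_h|_T$ bounds this by $c\|\nabla w_h\|_{L^4(T)}^4$. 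Summation and Step~2 yield $\|\widetilde{\eps}(u_h)\|_{L^2}\le c$.

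\textbf{Step 4 (Korn--Poincar\'e for $u_h$).} With $u_h=u_{D,h}+u_{0,h}$, applying \eqref{korn_poincare} to the piecewise $P_1$ function $u_{0,h}$ (dropping the $H^2$ component, which is irrelevant for the horizontal part) gives $\|u_{0,h}\|_{H^1}\le c_P\|\widetilde{\eps}(u_{0,h})\|_{L^2}\le c$, and the $H^1$-convergence of $u_{D,h}$ finishes the proof.

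The most delicate point is the discrete Sobolev--Poincar\'e bound in Step~2 under simple support, since $w_{0,h}|_{\partial\Omega}=0$ only constrains the tangential part of $\nabla_h w_{0,h}$; one has to route through $w_{0,h}\in H^1_0$ and the approximation property of Lemma~\ref{dkt_properties}(iii) rather than apply Poincar\'e directly to $\nabla_h w_{0,h}$. A secondary technicality is the equivalence, up to shape-regularity constants, between the elementwise nodal quadrature $\int\widehat{\mathcal{I}}_h[|\nabla w_h|^4]\dx$ and the exact $L^4$-norm of the polynomial $\nabla w_h|_T$, which is handled by the standard inverse estimate used above.
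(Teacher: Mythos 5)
Your argument is correct and follows essentially the same route as the paper's (much terser) proof: the bending term bounds $\|\nabla\nabla_h w_h\|_{L^2}$, the discrete norm property of \thref{dkt_properties}(iv) upgrades this to a $W^{1,4}$ bound on $w_h$, and the shear term together with the resulting $L^4$ bound on $\nabla w_h$ and the Korn--Poincar\'e inequality \eqref{korn_poincare} controls $u_h$ in $H^1$. The additional care you take with the nodal quadrature $\widehat{\mathcal{I}}_h$ (the one-sided comparison of $\int\widehat{\mathcal{I}}_h[|\nabla w_h|^4]\dx$ with $\|\nabla w_h\|_{L^4}^4$ via an inverse estimate, using that $\widetilde{\eps}(u_h)$ is elementwise constant) and with the simple-support case is detail the paper leaves implicit; where you reconstruct the discrete Poincar\'e--Sobolev step by hand you could simply invoke \thref{dkt_properties}(iv), which is exactly the $h$-uniform statement needed there.
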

\begin{proof}
From the bound on the discrete energy and a binomial formula we find that
$\nabla_h w_h$ is bounded in $H^1(\Omega;\mathbb{R}^2)$. \thref{dkt_properties} implies that the mapping $w_h\to||\nabla\nabla_h w_h||$ defines a norm on the set of functions $w_h\in W_h$ that satisfy the assumed boundary conditions. In particular, the sequence $(w_h)_{h>0}$ is bounded in $W^{1,4}(\Omega)$. Since $\widetilde{\varepsilon}(u_h)$ is bounded in $L^2(\Omega,\mathbb{R}^2)$, Korn's inequality \eqref{korn_poincare} implies that the sequence $(u_h)_{h>0}$ is bounded in $H^1(\Omega;\mathbb{R}^2)$. 
\end{proof}

The compactness result ensures $\Gamma$-convergence of the discrete energy functionals $E_h^\theta$ to the continuous energy $E^\theta$ as $h\to0$. 

\begin{thm}[$\Gamma$-convergence]
(i) Let $(u_h,w_h)_{h>0}\subset V_h\times W_h$ be a sequence of functions with bounded energy, i.e., $E_h^\theta(u_h,w_h)\leq c_E$ for all $h>0$. Then there exists a pair $(u,w)\in H^1(\Omega;\mathbb{R}^2)\times W^{1,4}(\Omega)$ with $w\in H^2(\Omega)$ and a subsequence (not relabeled) such that
\[
(u_h,w_h)\to(u,w)\,\text{ in }\,H^1(\Omega;\mathbb{R}^2)\times W^{1,4}(\Omega)
\]
as $h\to0$ and
\[
E^\theta(u,w)\leq\liminf_{h\to0}E_h^\theta(u_h,w_h).
\]
(ii) For every $(u,w)\in H^1(\Omega;\mathbb{R}^2)\times H^2(\Omega)$ there exists a recovery sequence $(u_h,w_h)_{h>0}\subset V_h\times W_h$ such that
\[
(u_h,w_h)\to(u,w)\,\text{ in }\,H^1(\Omega;\mathbb{R}^2)\times W^{1,4}(\Omega)
\]
as $h\to0$ and
\[
\lim_{h\to0}E_h^\theta(u_h,w_h)=E^\theta(u,w).
\]
\end{thm}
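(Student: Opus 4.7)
The plan is to reduce to the single-layer case treated in \cite{bartels_fvk} by carefully handling the coupling term and exploiting the discrete gradient machinery of \thref{dkt_properties} together with the compactness result \thref{lem:compactness}.

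For part (i), I would start from \thref{lem:compactness} which yields $\|u_h\|_{H^1}+\|w_h\|_{W^{1,4}}\leq c$, and observe that since $\|\nabla\nabla_h w_h\|$ is controlled by the bending term, \thref{dkt_properties}(i) upgrades this to an $H^2$-bound on $w_h$. Passing to a subsequence, $u_h\rightharpoonup u$ in $H^1$ and $w_h\rightharpoonup w$ in $H^2$, so the compact Sobolev embedding $H^2\hookrightarrow W^{1,4}$ available in two dimensions delivers the strong convergence $w_h\to w$ in $W^{1,4}$; by Rellich, $u_h\to u$ in every $L^p$, $p<\infty$. To identify the weak limit of the discrete Hessian I combine \thref{dkt_properties}(iii), giving $\|\nabla_h w_h-\nabla w_h\|_{L^2}\leq c\,h\,\|D^2 w_h\|_{L^2}\to 0$, with the weak $H^1$ convergence of $\nabla w_h$ to $\nabla w$, obtaining $\nabla\nabla_h w_h\rightharpoonup D^2w$ weakly in $L^2$. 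Weak lower semicontinuity of the $L^2$-norm then yields the liminf bound for the bending contribution $\tfrac12\int|\nabla\nabla_h w_h-\alpha I|^2\dx$.

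The stretching term is more delicate because of the lumping operator $\widehat{\mathcal{I}}_h$. I would argue that
\[
\int_\Omega \widehat{\mathcal{I}}_h\bigl[|\nabla w_h\otimes\nabla w_h+\widetilde{\eps}(u_h)|^2\bigr]\dx
=\int_\Omega |\nabla w_h\otimes\nabla w_h+\widetilde{\eps}(u_h)|^2\dx+o(1),
\]
using that the integrand is elementwise polynomial of bounded degree, so that the standard quadrature consistency estimate for $\widehat{\mathcal{I}}_h$ gives an error controlled by $h$ times products of norms already bounded by the energy. Since $\nabla w_h\to\nabla w$ strongly in $L^4$ we have $\nabla w_h\otimes\nabla w_h\to\nabla w\otimes\nabla w$ strongly in $L^2$, and $\widetilde{\eps}(u_h)\rightharpoonup\widetilde{\eps}(u)$ in $L^2$, so the sum converges weakly in $L^2$ to the correct limit and weak lower semicontinuity finishes the liminf inequality.

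For part (ii), I would first treat smooth data $(u,w)$ with $u\in H^1\cap C^\infty(\bar\Omega)$ (respecting the boundary conditions) and $w\in H^3$, taking $u_h$ to be a Scott–Zhang interpolant and $w_h=\mathcal{I}_h^{\mathrm{dkt}}w$ the DKT nodal interpolant of \thref{dkt_interpol}. \thref{dkt_properties}(ii) then yields $\nabla_h w_h\to\nabla w$ in $L^2$ and $\nabla\nabla_h w_h\to D^2 w$ in $L^2$, so the bending term converges; classical interpolation estimates together with $H^2\hookrightarrow W^{1,4}$ provide $u_h\to u$ in $H^1$ and $w_h\to w$ in $W^{1,4}$, and the same quadrature consistency used in part (i) gives convergence of the stretching term. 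A standard density-plus-diagonal argument extends the construction to general $(u,w)\in H^1(\Omega;\mathbb{R}^2)\times H^2(\Omega)$ by approximating the target in the strong norms and choosing $h=h(\epsilon)$ appropriately.

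The principal obstacle in both parts is the consistent treatment of $\widehat{\mathcal{I}}_h$ on the quartic integrand, which is absent from the single-layer proof in \cite{bartels_fvk}: one must control mixed terms of the form $\widetilde{\eps}(u_h):(\nabla w_h\otimes\nabla w_h)$ where $u_h$ is only $H^1$-bounded while $\nabla w_h$ is $H^1$-bounded. Once these cross-terms are estimated by the elementwise quadrature error together with inverse estimates on $\widetilde{\eps}(u_h)$, the remainder of the argument parallels the single-layer analysis.
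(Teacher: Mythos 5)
Your overall strategy coincides with the paper's: compactness from Lemma \ref{lem:compactness}, identification of the limit of the discrete Hessian, weak lower semicontinuity for the liminf inequality, a quadrature-consistency estimate for the lumped membrane term, and interpolation of smooth data plus density for the recovery sequence. However, there is one genuine gap in part (i): you treat $w_h$ as an element of $H^2(\Omega)$, claiming that \thref{dkt_properties}(i) ``upgrades'' the energy bound to an $H^2$-bound on $w_h$, that $w_h\rightharpoonup w$ in $H^2$, and that $\nabla w_h$ converges weakly in $H^1$. The discrete Kirchhoff triangle is $H^2$-\emph{non}conforming: functions in $W_h$ are reduced cubics whose gradients are continuous only at the vertices, so $\nabla w_h$ jumps across element edges and $w_h\notin H^2(\Omega)$ in general. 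The bound from \thref{dkt_properties}(i) is only a broken (elementwise) $H^2$ bound, and neither the weak $H^2$ convergence nor the compact embedding $H^2\hookrightarrow W^{1,4}$ can be invoked for $w_h$ directly. The correct route — and the one the paper takes — is to work with the conforming object $\nabla_h w_h\in\Theta_h\subseteq H^1(\Omega;\mathbb{R}^2)$: it is bounded in $H^1$, so $\nabla_h w_h\rightharpoonup Y$ in $H^1$ and $\nabla_h w_h\to Y$ strongly in $L^4$; combining \thref{dkt_properties}(i),(iii) with an inverse estimate gives $\|\nabla_h w_h-\nabla w_h\|_{L^4}\leq ch^{1/2}\|\nabla\nabla_h w_h\|\to0$, whence $Y=\nabla w$, $w\in H^2(\Omega)$, and $\nabla w_h\to\nabla w$ strongly in $L^4$. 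Your subsequent argument (strong $L^4$ convergence of $\nabla w_h\otimes\nabla w_h$ in $L^2$, weak convergence of $\widetilde{\eps}(u_h)$, weak lower semicontinuity) then goes through unchanged.

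Two smaller points. First, your quadrature estimate is too optimistic as stated: the elementwise error is controlled by $h_T\|D|\varphi_h|^2\|_{L^1(T)}$ with $\varphi_h=\nabla w_h\otimes\nabla w_h+\widetilde{\eps}(u_h)$, and the derivative $D\varphi_h$ contains the product $\nabla w_h\, D^2 w_h$ whose norm is \emph{not} uniformly bounded by the energy; one must spend an inverse estimate $\|\nabla w_h\|_{L^\infty(T)}\leq ch_T^{-1/2}\|\nabla w_h\|_{L^4(T)}$, which leaves a net decay of order $h^{1/2}$ rather than $h$ — still sufficient, but the bookkeeping matters. Second, the cross-terms you worry about at the end are harmless for a simpler reason than ``inverse estimates on $\widetilde{\eps}(u_h)$'': since $u_h$ is piecewise affine, $\widetilde{\eps}(u_h)$ is piecewise constant and contributes nothing to $D\varphi_h$ on each element, which is exactly how the paper disposes of them.
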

\begin{proof}
We show that the error of numerical integration vanishes as $h\to0$, i.e.,
\begin{align*}
\int_{\Omega} \widehat{\mathcal{I}}_h\bigl[|\nabla w_h \otimes \nabla w_h + \widetilde{\eps}(u_h)|^2\bigr] \dx \to \int_{\Omega} |\nabla w_h \otimes \nabla w_h + \widetilde{\eps}(u_h)|^2 \dx.
\end{align*} 
Upon defining $\varphi_h=\nabla w_h \otimes \nabla w_h + \widetilde{\eps}(u_h)$ and applying an elementwise interpolation estimate, the chain rule, Hölder's inequality and the triangle inequality, we find that
\begin{align*}
\left|\int_{\Omega} \widehat{\mathcal{I}}_h|\varphi_h|^2-|\varphi_h|^2 \dx\right|
&\leq c \sum_{T\in\Th}h_T\left|\left|D|\varphi_h|^2\right|\right|_{L^1(T)}\\
&\leq 2c \sum_{T\in\Th}h_T^{1/2}\left|\left|\varphi_h\right|\right|_{L^2(T)}h_T^2\left|\left|D\varphi_h\right|\right|_{L^2(T)}\\
&\leq 2c h^{1/2}\left|\left|\varphi_h\right|\right|\sum_{T\in\Th}h_T^2\left|\left|D\varphi_h\right|\right|_{L^2(T)}\\
&\leq 4ch^{1/2}\left|\left|\varphi_h\right|\right|\Bigl(\sum_{T\in\Th}h_T||\nabla w_h||_{L^\infty(T)}^2||D^2w_h||_{L^2(T)}^2\Bigr)^{1/2}.
\end{align*}
We used the fact that $u_h\in\mathcal{S}^1(\Th)^2$ implies $||D^2u_h||_{L^2(T)}=0$ on every $T\in\Th$. 
The bound $E_h^\theta(u_h,w_h)\leq c_E$ for all $h>0$ yields that the terms including $\varphi_h$ and $D^2w_h$ are uniformly bounded by a constant. In addition, we are able to apply \thref{lem:compactness} which gives
\[
||u_h||_{H^1(\Omega;\mathbb{R}^2)} + ||w_h||_{W^{1,4}(\Omega)} \leq c.
\]
Hence, a final application of the inverse estimates $||\nabla w_h||_{L^\infty(T)}\leq ch_T^{-1/2}||\nabla w_h||_{L^4(T)}$ and $||D^2 u_h||_{L^2(T)}\leq ch_T^{-1}||\nabla u_h||_{L^2(T)}$ shows that the right-hand side tends to zero as $h\to0$.

(i) Let $(u_h,w_h)_{h>0}\subset V_h\times W_h$ be a sequence of functions satisfying $E_h^\theta(u_h,w_h)\leq c_E$ for all $h>0$. \thref{lem:compactness} implies that $||u_h||_{H^1(\Omega;\mathbb{R}^2)} + ||w_h||_{W^{1,4}(\Omega)} \leq c$. In particular, for some $Y\in H^1(\Omega;\mathbb{R}^2)$ and $w\in W^{1,4}(\Omega)$ we have for a subsequence (not relabeled) that
\begin{alignat*}{2}
\nabla_hw_h&\rightharpoonup Y&&\text{ in }H^1(\Omega;\mathbb{R}^2),\\
w_h&\rightharpoonup w&&\text{ in } W^{1,4}(\Omega),\\
\nabla_hw_h&\to \nabla w_h\quad&&\text{ in }L^4(\Omega;\mathbb{R}^2).
\end{alignat*}
We used items (i) and (iii) of \thref{dkt_properties} and an inverse estimate to find that
\[
||\nabla_hw_h-\nabla w_h||_{L^4(\Omega)}\leq ch^{1/2}||\nabla\nabla_hw_h||_{L^2(\Omega)}\to0
\]
as $h\to0$. The uniqueness of weak limits implies that $\nabla w =Y$, hence $w\in H^2(\Omega)$. Since $\nabla_hw_h\to Y=\nabla w$ in $L^4(\Omega;\mathbb{R}^2)$ we also have that $\nabla w_h\to\nabla w$ in $L^4(\Omega;\mathbb{R}^2)$. Similar arguments show that there exists $u\in H^1(\Omega;\mathbb{R}^2)$ such that $u_h\to u$ in $H^1(\Omega;\mathbb{R}^2)$. Since the error of numerical integration vanishes, strong convergence of $\nabla w_h\to\nabla w$ in $L^4(\Omega;\mathbb{R}^2)$ and $u_h\to u$ in $H^1(\Omega;\mathbb{R}^2)$ together with weak lower semicontinuity of the convex terms shows the desired estimate.

(ii) We argue by density of smooth functions to assume that $(u,w)\in H^2(\Omega;\mathbb{R}^2)\times H^3(\Omega)$ and construct a recovery sequence $(u_h,w_h)_{h>0}$ via interpolation of $(u,w)$ by defining $u_h=\mathcal{I}_hu\in V_h$ and $w_h=\mathcal{I}_h^\text{dkt}w\in W_h$. Here, $\mathcal{I}_h$ denotes the standard nodal interpolation operator while $\mathcal{I}_h^\text{dkt}$ is defined in \thref{dkt_interpol}. \thref{dkt_properties} implies the interpolation estimate
\[
||\nabla_hw-\nabla w||_{L^2(T)}+h_T||\nabla\nabla_hw-D^2 w||_{L^2(T)}\leq ch_T^2||D^3w||_{L^2(T)}.
\]
Upon recalling that $\nabla_hw=\nabla_h\mathcal{I}_h^\text{dkt}w$ for $w\in H^3(\Omega)$, a summation over all elements $T\in\Th$ yields that $\nabla\nabla_h w_h\to D^2 w$ in $L^2(\Omega)$ as $h\to0$. In particular, this shows that
\[
\int_{\Omega} |\nabla\nabla_h w_h - \alpha I |^2  \dx \to \int_{\Omega} |D^2 w - \alpha I |^2  \dx
\] 
as $h\to0$. From standard interpolation estimates for $w_h$ and $u_h$ we also obtain that 
\[
\int_{\Omega} |\nabla w_h \otimes \nabla w_h + \widetilde{\eps}(u_h)|^2 \dx \to \int_{\Omega} |\nabla w \otimes \nabla w + \widetilde{\eps}(u)|^2 \dx
\]
as $h\to0$. Combining the above results shows the asserted convergence property.
\end{proof}

\section{Numerical Experiments} 
\subsection{Implementation}
In this section we introduce the discrete version of the proposed gradient flow and analyse the performance of the corresponding \textsc{Matlab} implementation. If $D^2_h=\nabla\nabla_h$ denotes the discrete Hessian, the iterative scheme is defined via inner products
\[
(w_h,v_h)_\text{ver} = \bigl(D_h^2w_h,D_h^2v_h\bigr),\quad (u_h,z_h)_\text{hor} = \bigl(\widetilde{\varepsilon}(u_h),\widetilde{\varepsilon}(z_h)\bigr)
\]
for discrete functions $w_h,v_h\in W_h$ and  $u_h,z_h\in V_h$ and the $L^2$ inner product $(\cdot,\cdot)_h$ specified in \eqref{discrete_product}. For approximations $u_{D,h}\in V_h$ and $w_{D,h}\in W_h$ of some boundary data, we define the discrete set of admissible functions as
\[
\mathcal{A}_h = \mathcal{A}_{0,h}+(u_{D,h},w_{D,h}),
\]
where $\mathcal{A}_{0,h}=V_{0,h}\times W_{0,h}$ is a linear subspace subject to homogeneous boundary conditions. This leads to the following scheme for the discrete decoupled gradient flow.

\begin{alg}[Discrete decoupled gradient flow]\thlabel{discr_decoupled_grad_flow}
Specify an initial configuration $(u^0_h,w^0_h)\in\mathcal{A}_h$, an initial step size $\tau_1>0$, stopping tolerances $\varepsilon_\mathrm{Newton},\varepsilon_\mathrm{stop}>0$ and set $k=1$.\\[3pt]
(1a) Decrease $\tau_k$ until the Newton scheme computes $w^k_h\in [W_{0,h}+w_{D,h}]$ within a tolerance $||D_h^2d_tw_h^k||\leq\varepsilon_\mathrm{Newton}$ and a maximum number $N>0$ of iterations such that
\begin{align*}
	\bigl(D_h^2d_t w_h^k,D_h^2v_h\bigr) &= -\bigl(D_h^2w_h^k - \alpha I , D_h^2v_h\bigr)\\
	&\hspace{1cm} - 2 \theta \bigl(|\nabla w_h^k|^2 \nabla w_h^k + \widetilde{\eps}(u_h^{k-1}) \nabla w_h^{k-1/2} , \nabla v_h\bigr)_h + \bigl(f,v_h\bigr)_h
\end{align*}
for all $v_h\in W_{0,h}$.\\[3pt]
(1b) Compute $u^k_h\in [V_{0,h}+u_{D,h}]$ such that
\begin{align*}
	\bigl(d_t \widetilde{\varepsilon}(u_h^k),\widetilde{\varepsilon}(z_h)\bigr)	= -\theta \bigl(\widetilde{\eps}(u_h^k) , \widetilde{\eps}(z_h)\bigr)_h - \theta \bigl(\nabla w_h^k \otimes \nabla w_h^k , \widetilde{\eps}(z_h)\bigr)_h
\end{align*}
for all $z_h\in V_{0,h}$.\\[3pt]
(2) Terminate the algorithm if $||d_t \widetilde{\varepsilon}(u_h^k)||+||d_t D^2_hw_h^k||\leq\varepsilon_\mathrm{stop}\min\{1,\tau_k\}$. Otherwise, define 
\[
\tau_{k+1}=\min\bigl\{2\tau_k,\tau_\mathrm{max}\bigr\},
\]
set $k\to k+1$ and continue with Step (1a).
\end{alg}

For the following numerical experiments we use a maximum number of Newton iterations of $N=5$, a maximum step size $\tau_\text{max}=10^5$, stopping tolerances $\varepsilon_\text{Newton}=10^{-5}$, $\varepsilon_\text{stop}=10^{-12}$ and $\alpha = 1$.

We analyse the performance of the iterative scheme by running the discrete algorithm for an initially flat configuration on a uniform grid of meshsize $h=0.05$ and a prestrain parameter $\theta=1000$. The left plot in Figure \ref{fig:flat_circle_energy} shows the development of the energies $E_h^\theta(w_h^k,u_h^k)$ for the first fifty iterations $1\leq k\leq50$. We observe that the scheme effectively reduces the elastic energy. On the right-hand side of Figure \ref{fig:flat_circle_energy} the automatically chosen step sizes are visualized. The adaptive time stepping scheme decreases the step size for the first iteration to $\tau_1=0.125$. The step sizes then gradually increase until they reach the maximum value $\tau_\text{max}=10^5$ after 23 iterations.

\begin{figure}[H]
\centering
\begin{minipage}{0.49\textwidth}
\centering
\begin{tikzpicture}[scale=0.8]
\begin{axis}[
    xlabel={Iterations},
    xtick = {1,20,40,60,80,100},
    xticklabels = {$1$,$20$,$40$,$60$,$80$},
    legend pos=north east,
    ymajorgrids=false,
    xmajorgrids=false,
    grid style=dashed,
    legend style={nodes={scale=0.9, transform shape}},
    legend cell align={left},
    xmax = 50
]
\addplot[
    color=black
    ]
    table [x=iters,y = ener] {material/circ005_flat_theta1000_eps14_new_short.txt};
    \legend{$E_h^\theta(w_h^k{,}u_h^k)$}
\end{axis}
\end{tikzpicture}
\end{minipage}
\begin{minipage}{0.49\textwidth}
\centering
\begin{tikzpicture}[scale=0.8]
\begin{axis}[
		every y tick scale label/.style={at={(yticklabel* cs:0.96,0cm)},anchor=near yticklabel},
    xlabel={Iterations},
    xtick = {1,20,40,60,80,100},
    xticklabels = {$1$,$20$,$40$,$60$,$80$},
    legend pos=south east,
    ymajorgrids=false,
    xmajorgrids=false,
    grid style=dashed,
    legend style={nodes={scale=1, transform shape}},
    legend cell align={left},
    xmax = 50,
    ymax = 115000
]
\addplot[const plot,
    color=black
    ]
    table [x=iters,y = taus] {material/circ005_flat_theta1000_eps14_new_short.txt};
    \legend{$\tau_k$}
\addplot[color=black,
    dashed
    ]
    coordinates {(1,100000) (23,100000)};
    \addlegendentry{$\tau_\text{max}$}
\end{axis}
\end{tikzpicture}
\end{minipage}
\caption{Energy development of the sequence $(w_h^k,u_h^k)_{k\geq1}$ and step sizes $(\tau_k)_{k\geq1}$ of the iterative scheme for the first fifty iterations with $\theta=1000$ and $h=0.05$.}
\label{fig:flat_circle_energy}
\end{figure}
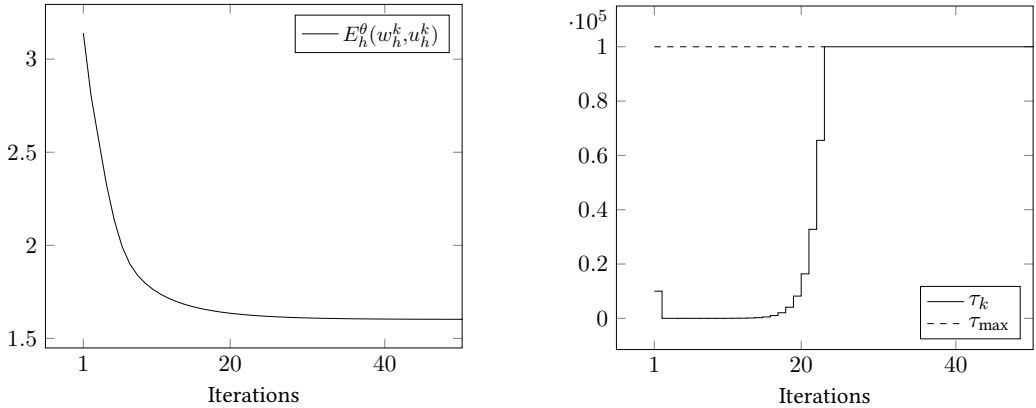

\subsection{Controlled Sphere-Cylinder Transition}
The amount of prestrain specified by the parameter $\theta$ dictates the shape of minimizers for the elastic energy functional. In theory, minimizers approach spherical configurations as in the linearized von Kármán model for small values of $\theta$. On the other hand, energy optimal configurations are expected to obtain cylindrical shapes as in the linearized Kirchhoff model for large values of $\theta$. To investigate the various shapes an initially flat approximation of the unit disc $\Omega=B_1(0)$ is considered. It is defined by the initial configurations
\[
u^0(x)=\begin{bmatrix}
0\\0
\end{bmatrix},\quad w^0(x)=0,\quad x\in B_1(0).
\]
We run the iterative scheme (\thref{discr_decoupled_grad_flow}) for $\alpha=1$ without applying any body force or boundary conditions, i.e., we set $f=0$, $\gamma_D=\emptyset$, and  $u_D = 0$ and $w_D=0$. To ensure uniqueness of discrete solutions an $L^2$-contribution is included in the discrete gradient flow. In particular, we define
\begin{align}\label{l2_gradflow}
(w_h,v_h)_\text{ver} = \bigl(D_h^2w_h,D_h^2v_h\bigr)+\bigl(w_h,v_h\bigr),\quad (u_h,z_h)_\text{hor} = \bigl(\widetilde{\varepsilon}(u_h),\widetilde{\varepsilon}(z_h)\bigr)+\bigl(u_h,z_h\bigr).
\end{align}
The corresponding numerical solutions for a grid of meshsize $h = 0.05$ and different values $\theta\in\{1,300,350,1000\}$ are visualized in Figure \ref{fig:flat_circle}.

\begin{figure}[H]
\centering
\begin{minipage}{0.49\textwidth}
\includegraphics[width=1\textwidth]{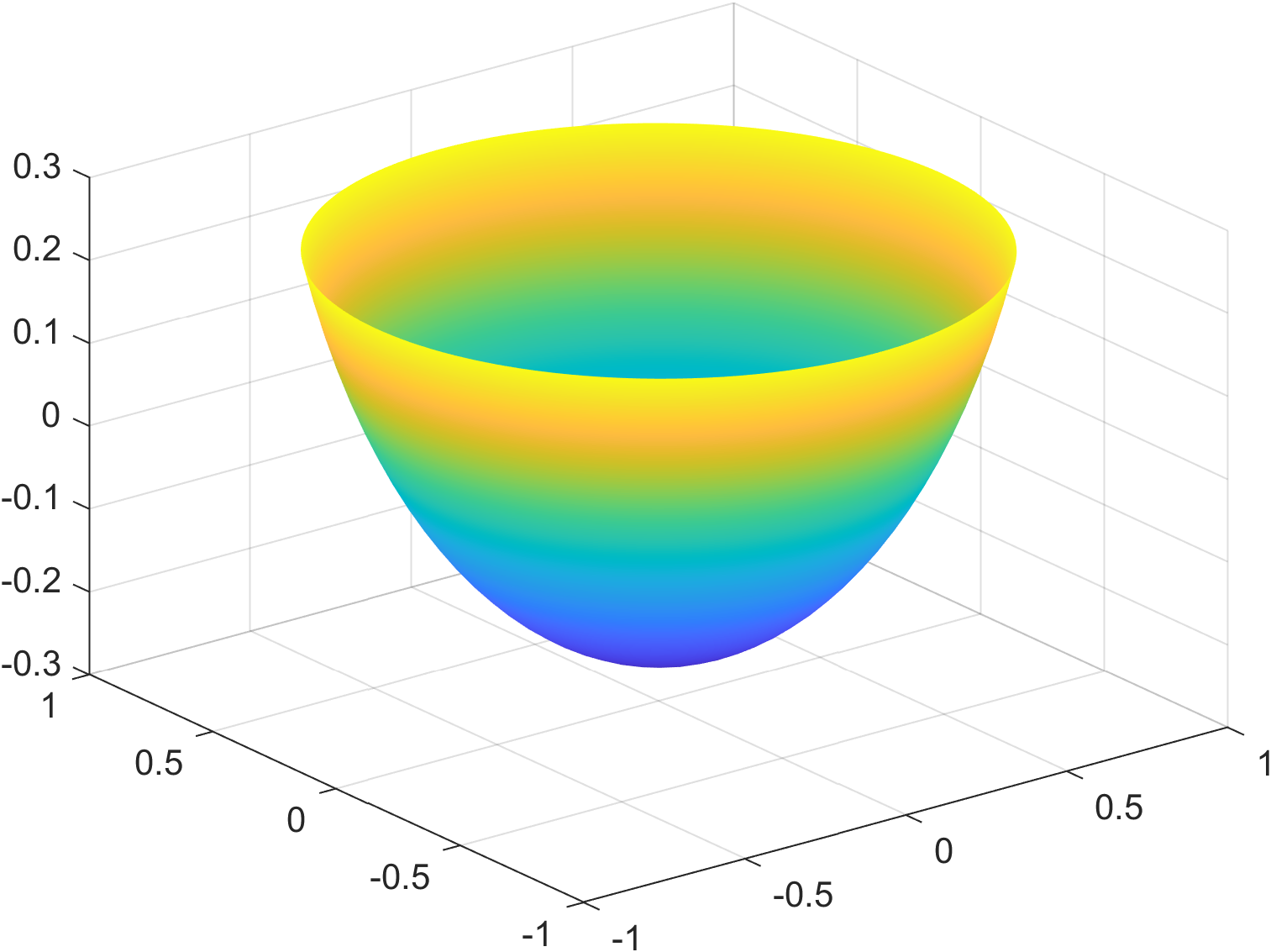}
\end{minipage}
\begin{minipage}{0.49\textwidth}
\includegraphics[width=1\textwidth]{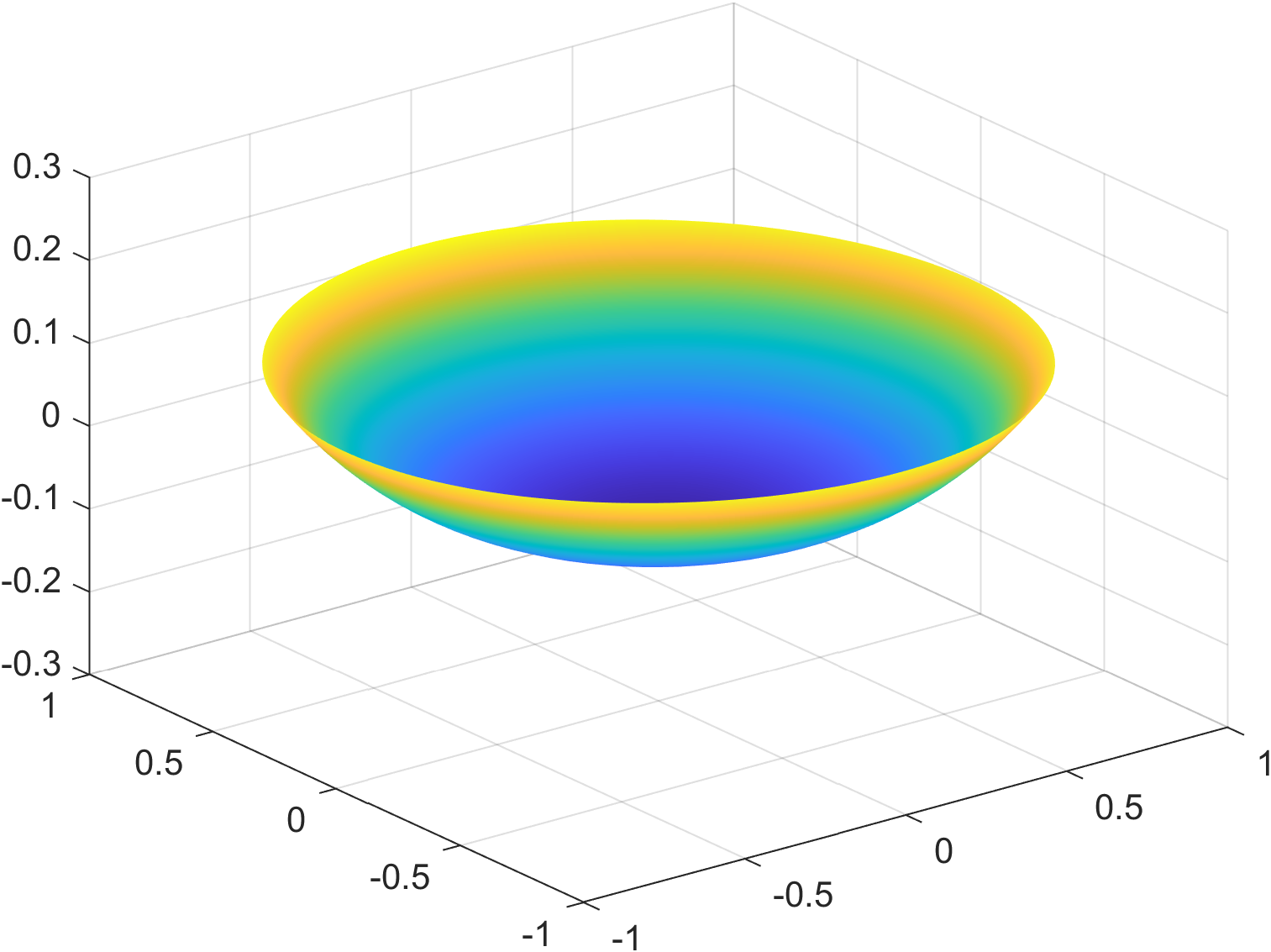}
\end{minipage}
\begin{minipage}{0.49\textwidth}
\includegraphics[width=1\textwidth]{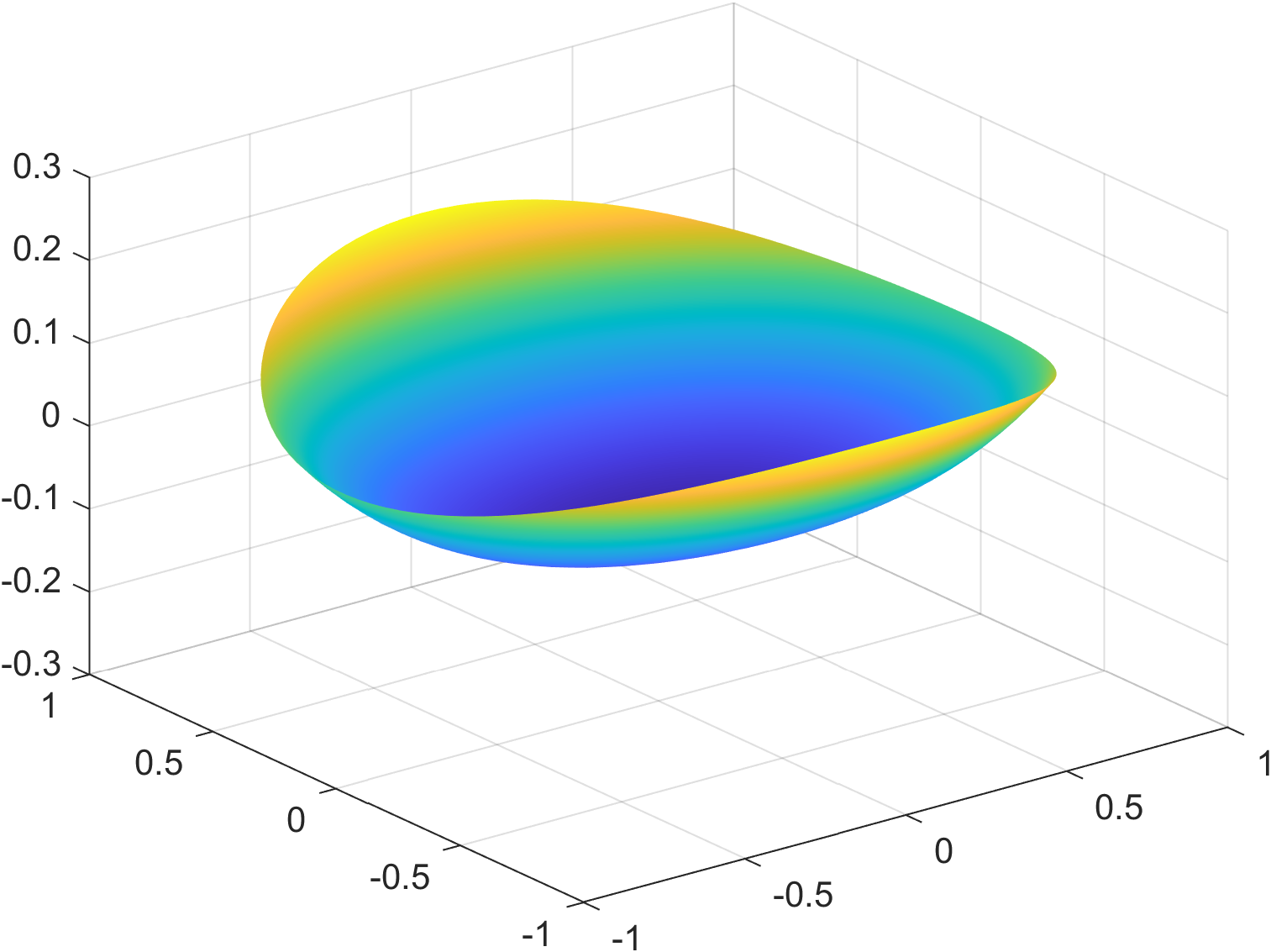}
\end{minipage}
\begin{minipage}{0.49\textwidth}
\includegraphics[width=1\textwidth]{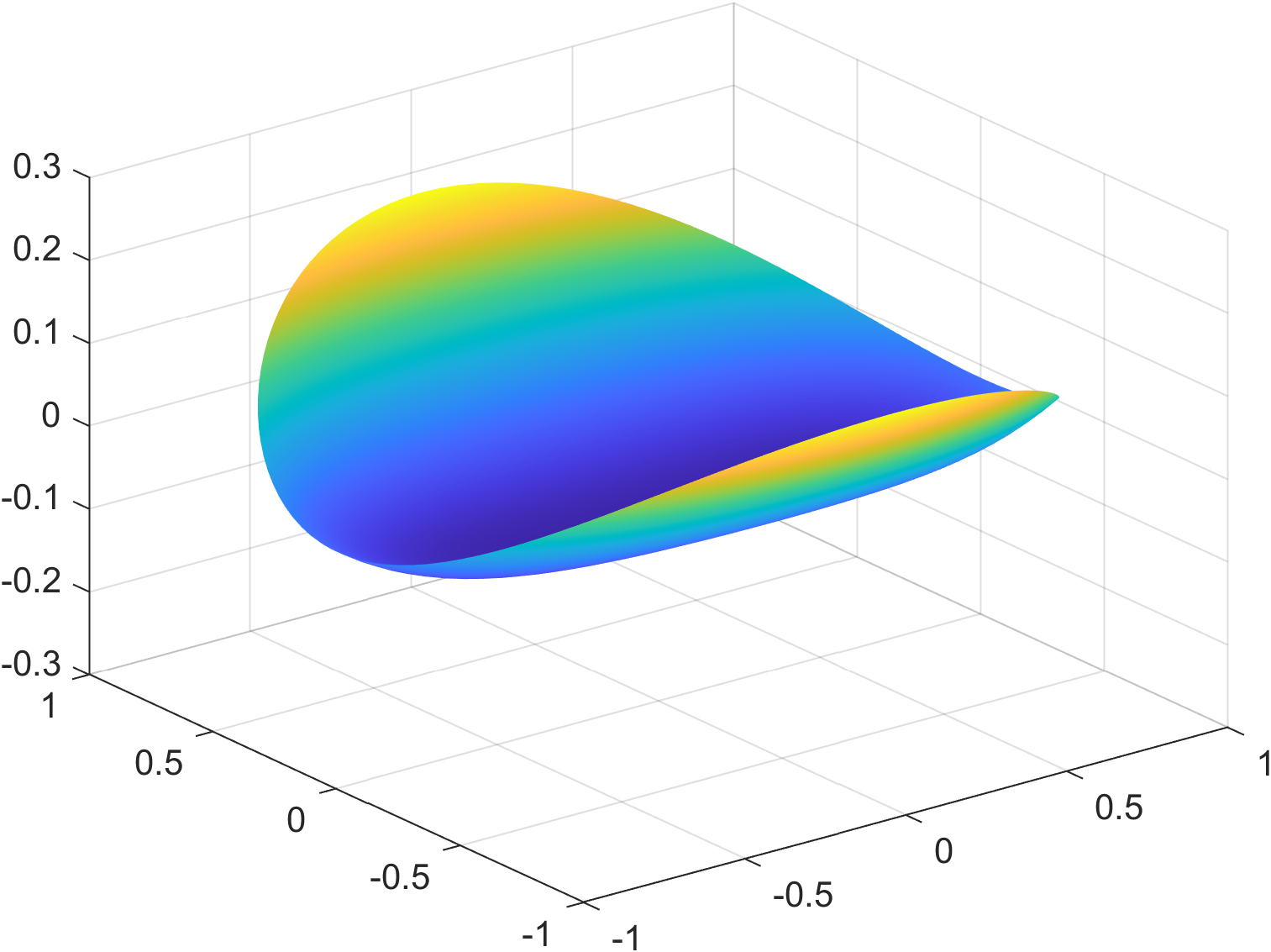}
\end{minipage}
\caption{Numerical solutions of \thref{discr_decoupled_grad_flow} for an initially flat disk with $\theta=1$ (top left), $\theta=300$ (top right), $\theta=350$ (bottom left) and $\theta=1000$ (bottom right). The colors represent the vertical magnitude from dark (lowest) to bright (highest). {\cc A transition from spherical to cylindrical configurations for increasing values of $\theta$ can be observed.}}
\label{fig:flat_circle}
\end{figure}

As can be seen in Figure \ref{fig:flat_circle} we obtain a spherical shape for small values of $\theta$ which coincides with the theoretical conjectures. The configuration flattens as $\theta$ increases until some critical range of values $\theta\in[250,350]$, at which a transition to a cylindrical shape takes place. The transformation occurs abruptly and sooner than theoretical observations might suggest, as minimizers of the Föppl--von Kármán model are known to approximate cylindrical minimizers of the linearized Kirchhoff model only in the limit $\theta\to\infty$. 

So far we identified low-energy solutions of the discrete model for some fixed values $\theta$. The remainder of this section is devoted to analyse the transition between the spherical and the cylindrical shape for intermediate values of the parameter $\theta$. We aim at identifying some critical point $\theta_\text{crit}$ at which a drastic deviation takes place. Our previous experiments suggest that $\theta_\text{crit} \approx 300$. The graphs in Figure \ref{fig:flat_circle_curvature} display the directional mean curvatures and the quotient 
\begin{align}\label{qsym}\cc
q_{\text{sym}}=\frac{\max u_1 -\min u_1}{\max u_2-\min u_2}
\end{align}
of the directional in-plane deflections of the final configurations for $\theta=1,2,\ldots,400$. {\cc The value $q_{\text{sym}}$ quantifies symmetric properties of the in-plane deformation $u$, where larger deviations from the value one indicate more asymmetric, and in our case more cylindrical, states.} We also include results using larger increments $\theta=400,450,\ldots,600$ to reduce the overall computation times.
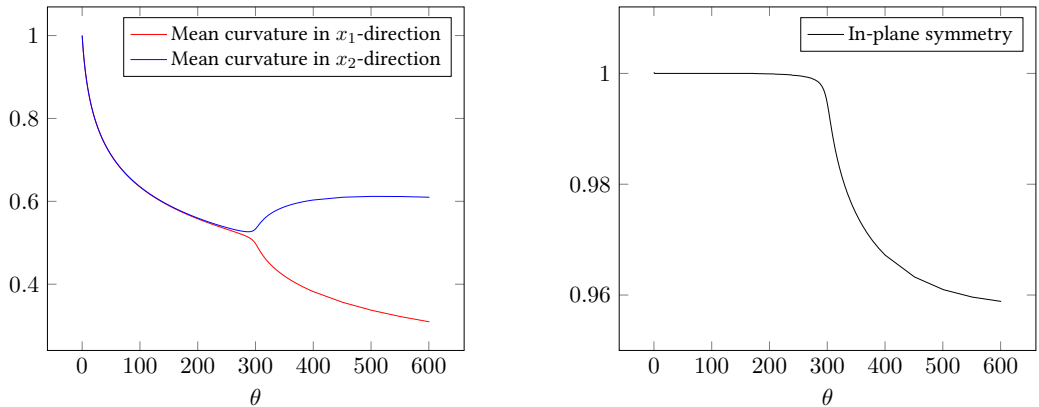
\begin{figure}
\centering
\begin{minipage}{0.49\textwidth}
\centering
\begin{tikzpicture}[scale=0.8]
\begin{axis}[
    xlabel={$\theta$},
    legend pos=north east,
    ymajorgrids=false,
    xmajorgrids=false,
    grid style=dashed,
    legend style={nodes={scale=0.9, transform shape}},
    legend cell align={left}
]
\addplot[
    color=red
    ]
    table [x = thetas, y expr=\thisrow{mean_curvatures_1_thetas}] {material/circ005thetas0_1_400_50_600.txt};
    \legend{Mean curvature in $x_1$-direction}
\addplot[
    color=blue
    ]
    table [x = thetas, y expr=\thisrow{mean_curvatures_2_thetas}] {material/circ005thetas0_1_400_50_600.txt};
    \addlegendentry{Mean curvature in $x_2$-direction}
\end{axis}
\end{tikzpicture}
\end{minipage}
\begin{minipage}{0.49\textwidth}
\centering
\begin{tikzpicture}[scale=0.8]
\begin{axis}[
    xlabel={$\theta$},
    ymin = 0.95, ymax = 1.012,
    legend pos=north east,
    ymajorgrids=false,
    xmajorgrids=false,
    grid style=dashed,
    legend style={nodes={scale=0.9, transform shape}},
    legend cell align={left}
]
\addplot[
    color=black
    ]
    table [x = thetas, y expr=1/\thisrow{symmetry}] {material/circ005thetas0_1_400_50_600.txt};
    \legend{In-plane symmetry}
\end{axis}
\end{tikzpicture}
\end{minipage}
\caption{Development of the directional mean curvatures (left) and the quotient $q_{\text{sym}}$, see \eqref{qsym}, of the directional in-plane deflections (right) of the final configurations for $\theta=1,2,...,400$ and $\theta=400,450,...,600$. {\cc A critical regime at around $\theta=300$ appears from which on a stark break of symmetry occurs.}}
\label{fig:flat_circle_curvature}
\end{figure}
We observe, that the directional curvatures remain nearly equal and the in-plane symmetry is almost constant for $\theta\lesssim200$. For intermediate values $200\lesssim\theta\lesssim300$ slight deviations appear, whereas, for values greater than $300$, the directional mean curvatures split apart and the symmetric properties decrease at a high rate. This is coherent with the observations from Figure \ref{fig:flat_circle}, in which the spherical deformations flatten and stay symmetric at first but rapidly switch to a cylindrical shape at some point.

Similar experiments have been carried out in \cite{schmidt_multilayers} with a different scaling of the elastic energy. In particular, the authors considered the energy $\widetilde{E}^\lambda$ defined by
\begin{align*}
	\widetilde{E}^{\lambda}(u,w) = \frac{1}{24} \int\limits_{\Omega} |D^2w - I |^2  \dx
	+ \frac{\lambda}{8} \int\limits_{\Omega} |\nabla w \otimes \nabla w + \widetilde{\eps}(u)|^2 \dx.
\end{align*}
To draw a comparison between our experiments and their numerical results, we note that rescaling the elastic energy $\widetilde{E}$ by a constant $C$ (in this case $C = 12$) has no effect on its minimizers. Matching the parameters $\theta$ and $\lambda$ of the membrane energies in $E$ and $\widetilde{E}$, respectively, yields the relation $\theta = 3 \lambda$. The critical value the authors achieved, at which the transition between spherical and cylindrical shape takes place, reads $\lambda_{\text{crit}} \approx 86$, hence $3\lambda_{\text{crit}} \approx 258$. It is difficult to quantify such a critical value in our numerical experiments since the transition is much smoother than the one observed in the first experiment of \cite{schmidt_multilayers} for an initial flat configuration. However, the results observed therein suggest a very similar range of critical values. The developments of the directional mean curvatures and the symmetric properties agree with the results of the authors' second test in which an initial configuration with the shape of a potato chip is used instead. 

\subsection{Curvature Inversion} 
The identity matrix $I$ in the first part of the energy enforces a specific bending behaviour of energy optimal configurations. The strength of the prescribed curvature can be modified by scaling the matrix $I$ by a factor $\alpha\in\mathbb{R}$. In particular, we consider the energy
\begin{align*}
	E^{\alpha}(u,w) = \frac{1}{2} \int_{\Omega} |D^2w - \alpha I |^2  \dx
	+ \frac{\theta}{2} \int_{\Omega} |\nabla w \otimes \nabla w + \widetilde{\eps}(u)|^2 \dx
\end{align*}
in which the forcing term is neglected. The sign of the factor $\alpha$ correlates to the sign of the specified surface curvature. The case $\alpha=0$ is related to the corresponding singlelayer model which has been studied in \cite{bartels_fvk}. We address in this section the inversion of the curvature which is modeled by letting $\alpha=1$ go to $\alpha=-1$. Similar to the previous experiment neither an external force nor boundary conditions are applied, i.e., $f=0$ and $\gamma_D=\emptyset$. In addition to the $L^2$-contribution in the discrete gradient flow \eqref{l2_gradflow}, we fix the vertical deflection of the midpoint $x_0=[0,0]\T$ of the mesh to zero by prescribing $w(x_0)=0$. \thref{discr_decoupled_grad_flow} is run with an initial flat configuration and $\alpha=1$. Next, the parameter $\alpha$ is decreased by 0.05 and the algorithm is reapplied to the resulting configuration of the previous step. This process is repeated until $\alpha=-1$ is reached. Figure \ref{fig:curvature_inversion} shows the numerical solutions for $\theta\in\{0,1000\}$ and $\alpha\in\{1,0.7,0.3,-1\}$. 
It is apparent from Figure \ref{fig:curvature_inversion} that for $\theta=0$ and any $\alpha\in[-1,1]$ the strain density is distributed radially symmetric across the plate. The center always contains the least amount of strain while the values progressively increase towards the boundary of the object. When $\alpha=0$ we obtain a completely flat configuration. For $\theta=1000$ and $\alpha=1$ the algorithm produces a cylindrical shaped object which is coherent with the observations of the previous experiments. For decreasing $\alpha$ the configurations flatten along paths with the largest directional curvature until all corner points of the plate reach an equal height for $\alpha\approx0.3$. The numerical solutions then flatten evenly to a planar state for $\alpha\to0$. In both cases $\theta\in\{0,1000\}$ the processes are reversed for negative values of the parameter $\alpha$ with a change in the sign of the surface curvature.

\begin{figure}
\centering
\begin{minipage}{0.49\textwidth}
\includegraphics[width=0.95\textwidth]{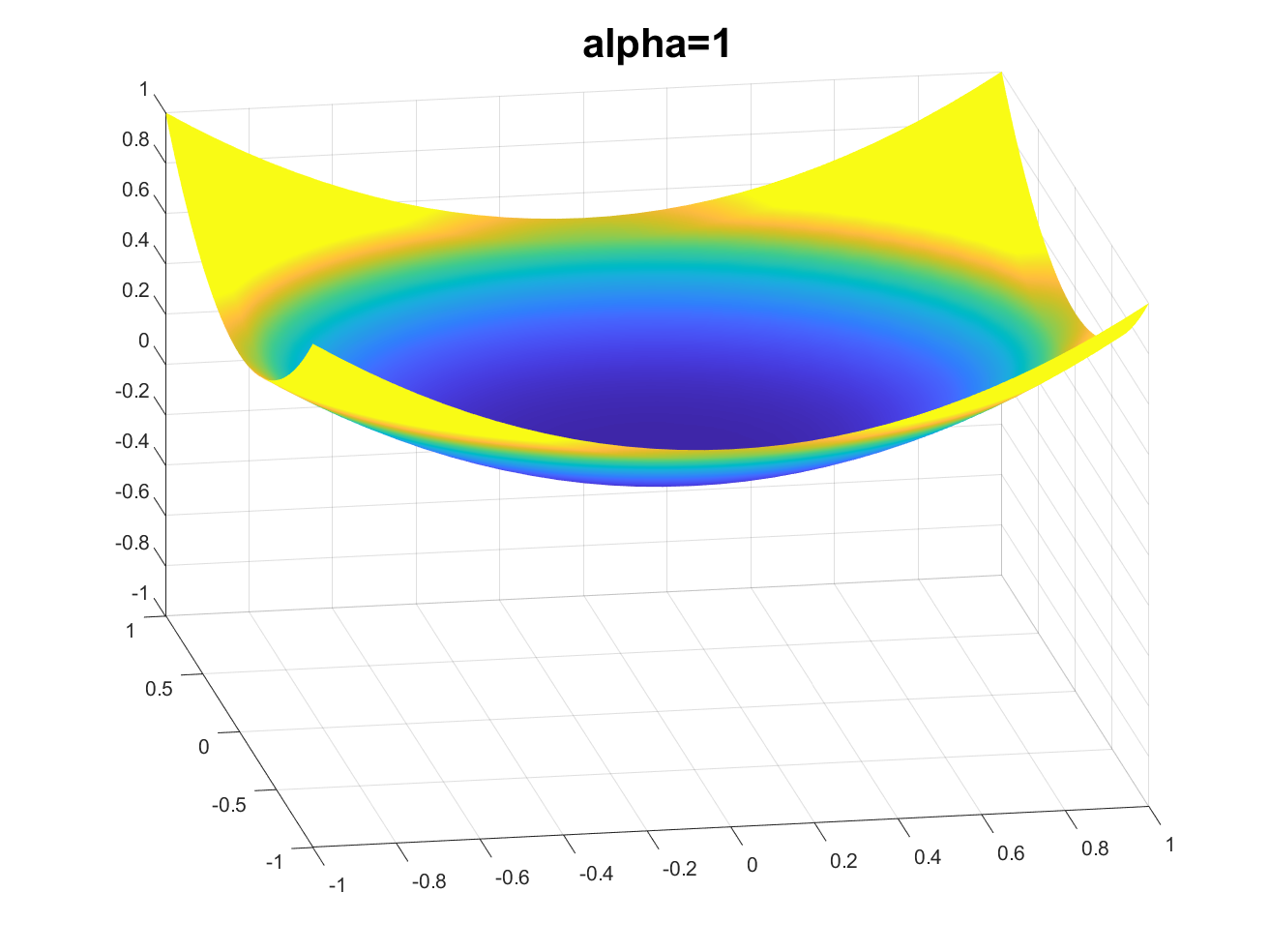}
\end{minipage}
\begin{minipage}{0.49\textwidth}
\includegraphics[width=0.95\textwidth]{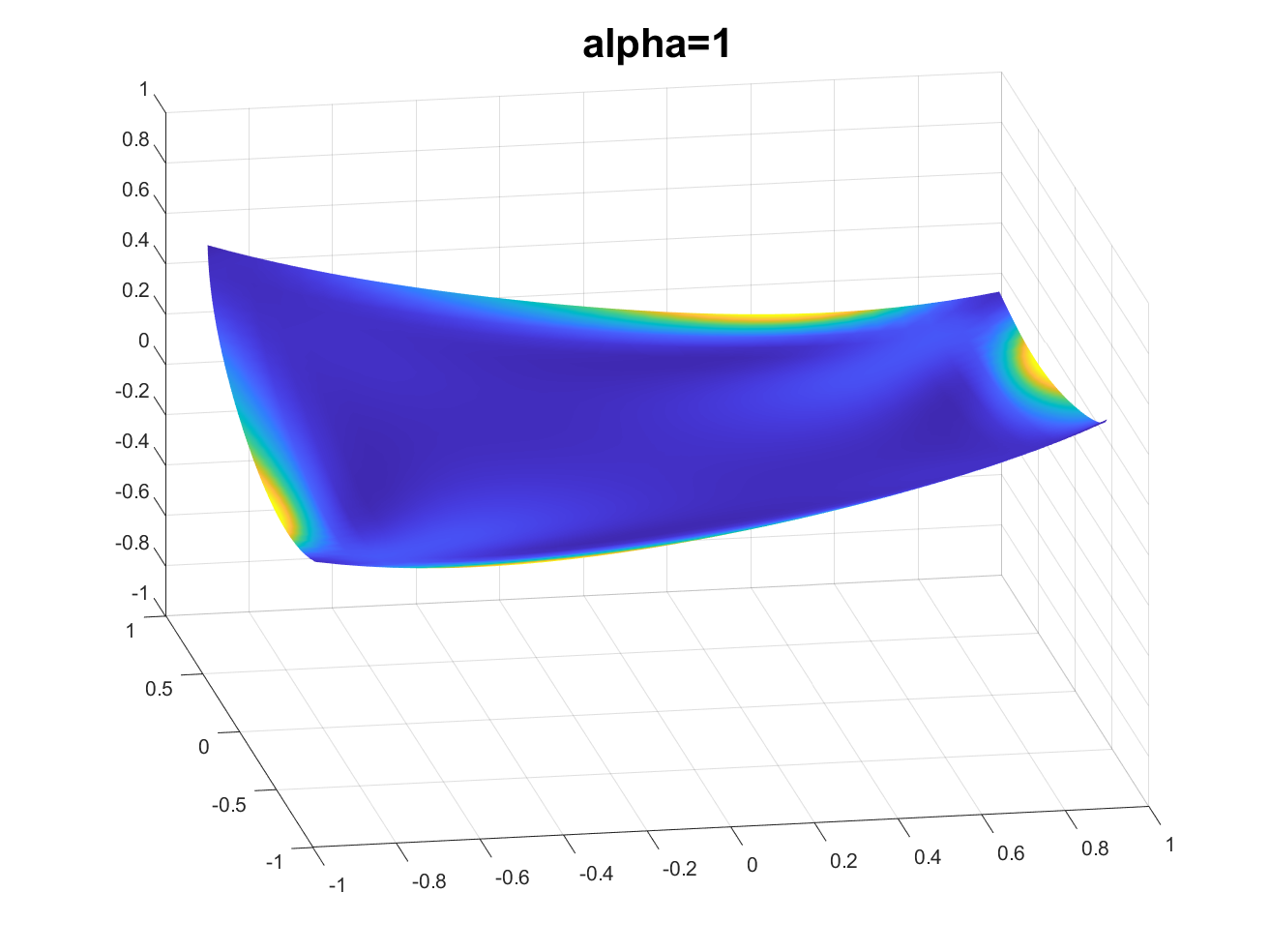}
\end{minipage}
\begin{minipage}{0.49\textwidth}
\includegraphics[width=0.95\textwidth]{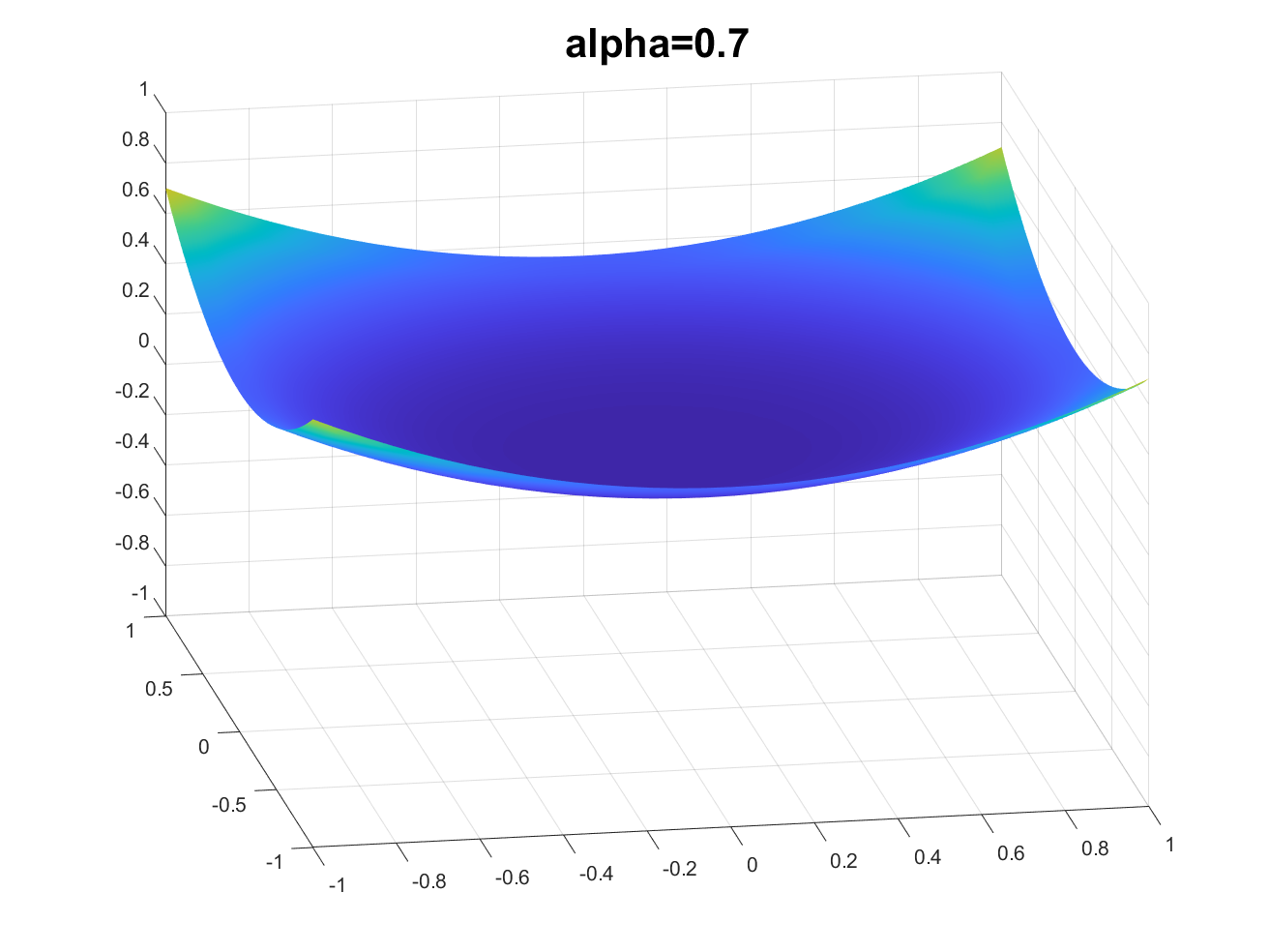}
\end{minipage}
\begin{minipage}{0.49\textwidth}
\includegraphics[width=0.95\textwidth]{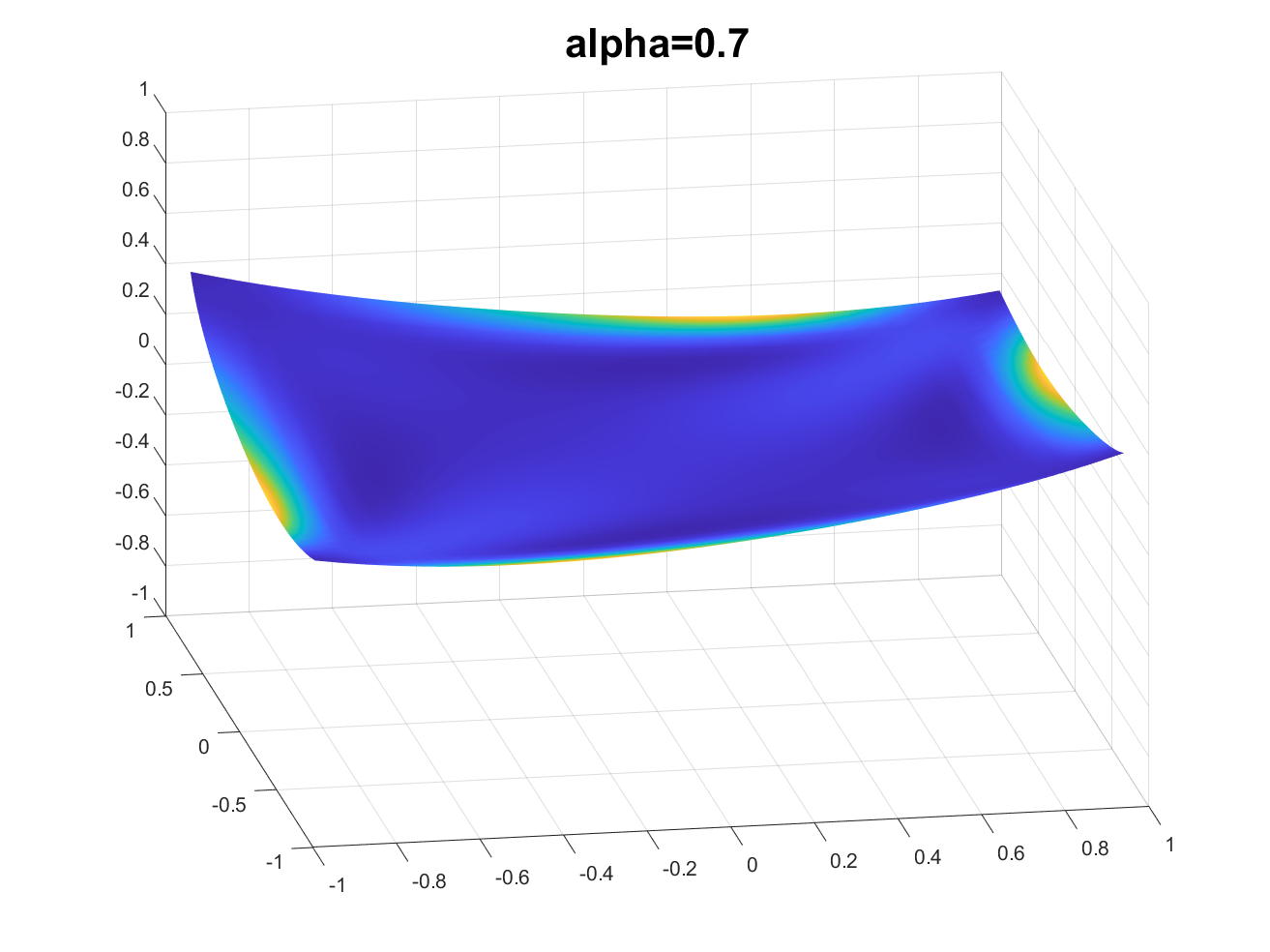}
\end{minipage}
\begin{minipage}{0.49\textwidth}
\includegraphics[width=0.95\textwidth]{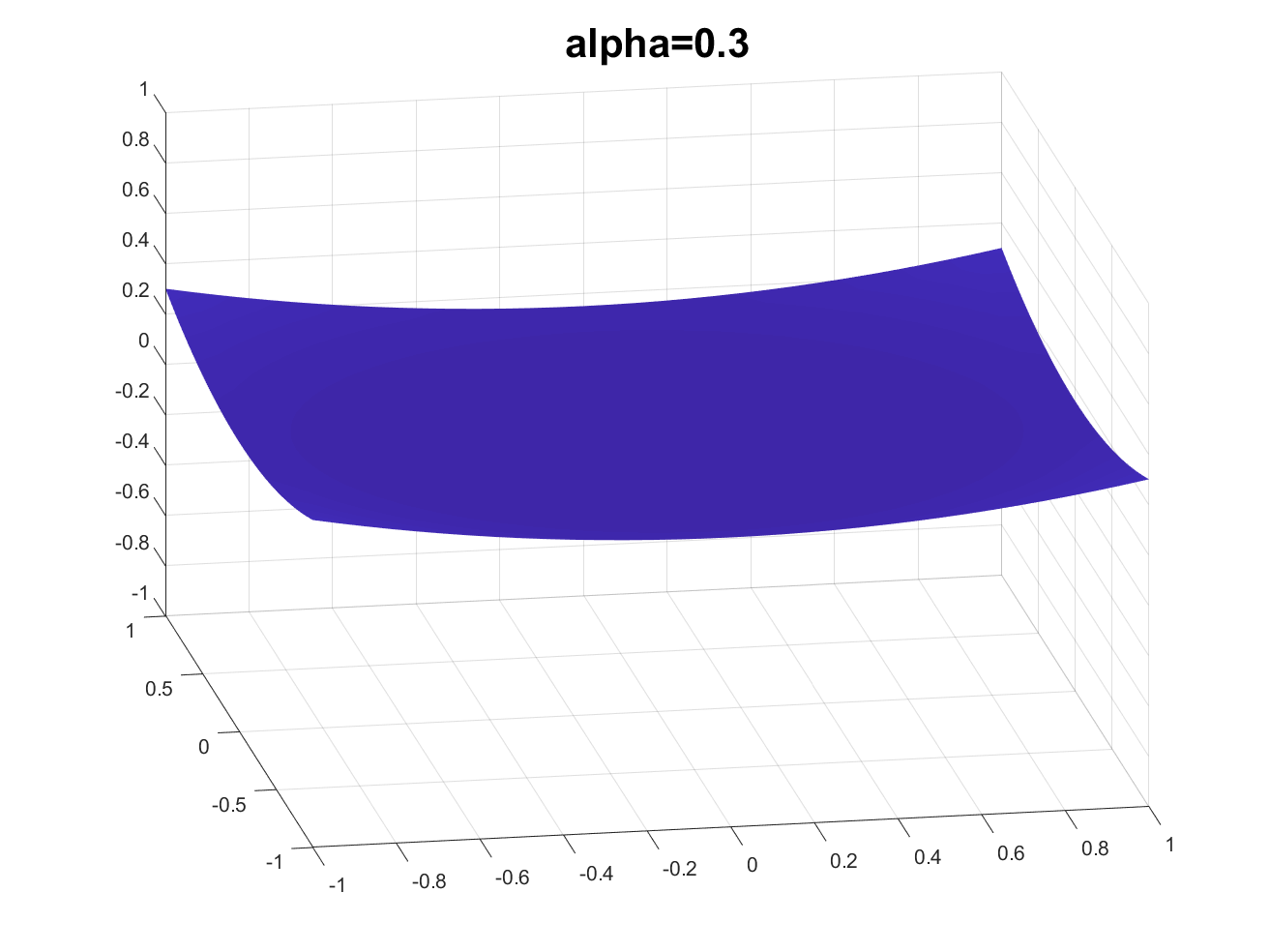}
\end{minipage}
\begin{minipage}{0.49\textwidth}
\includegraphics[width=0.95\textwidth]{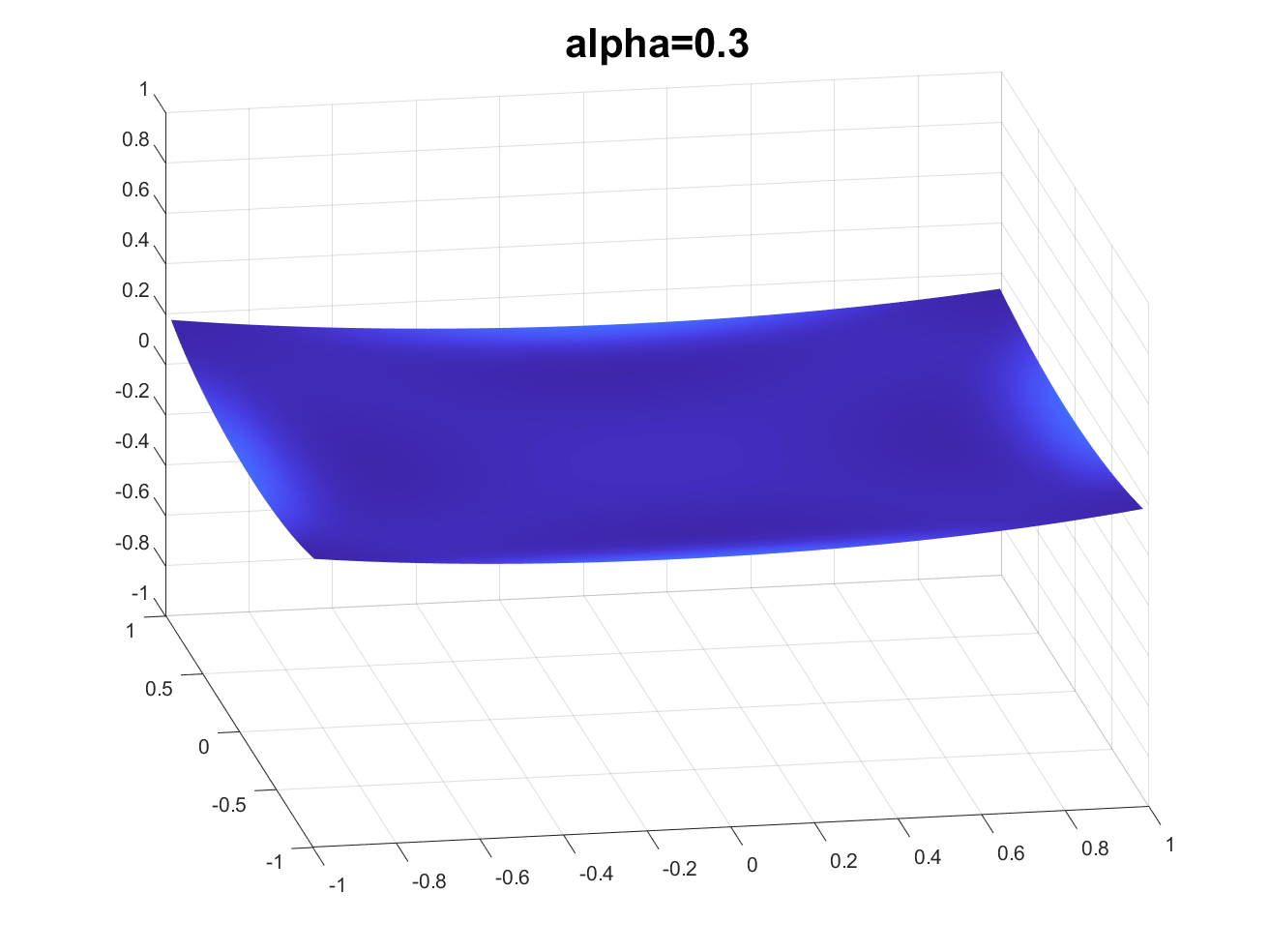}
\end{minipage}
\begin{minipage}{0.49\textwidth}
\includegraphics[width=0.95\textwidth]{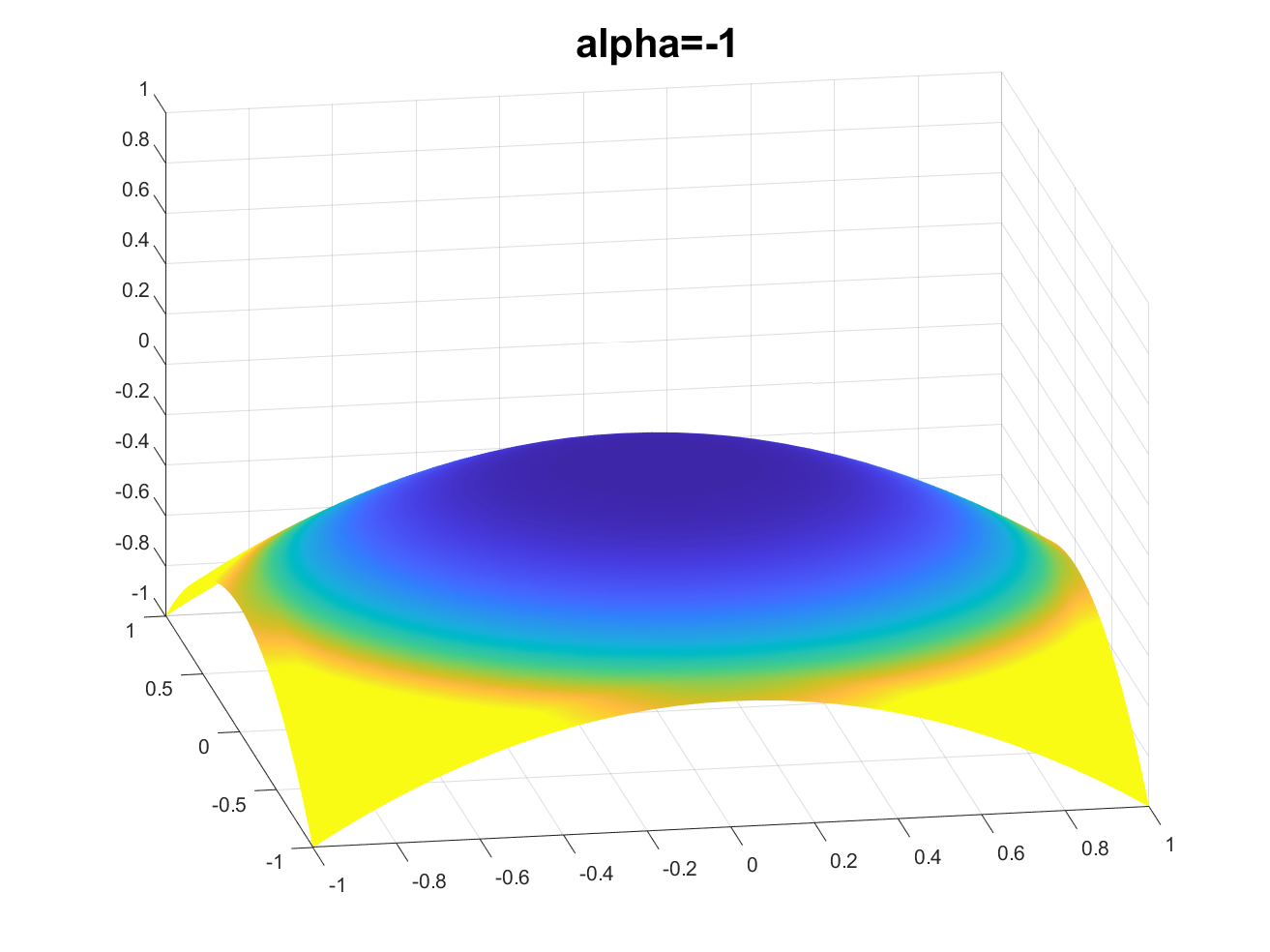}
\end{minipage}
\begin{minipage}{0.49\textwidth}
\includegraphics[width=0.95\textwidth]{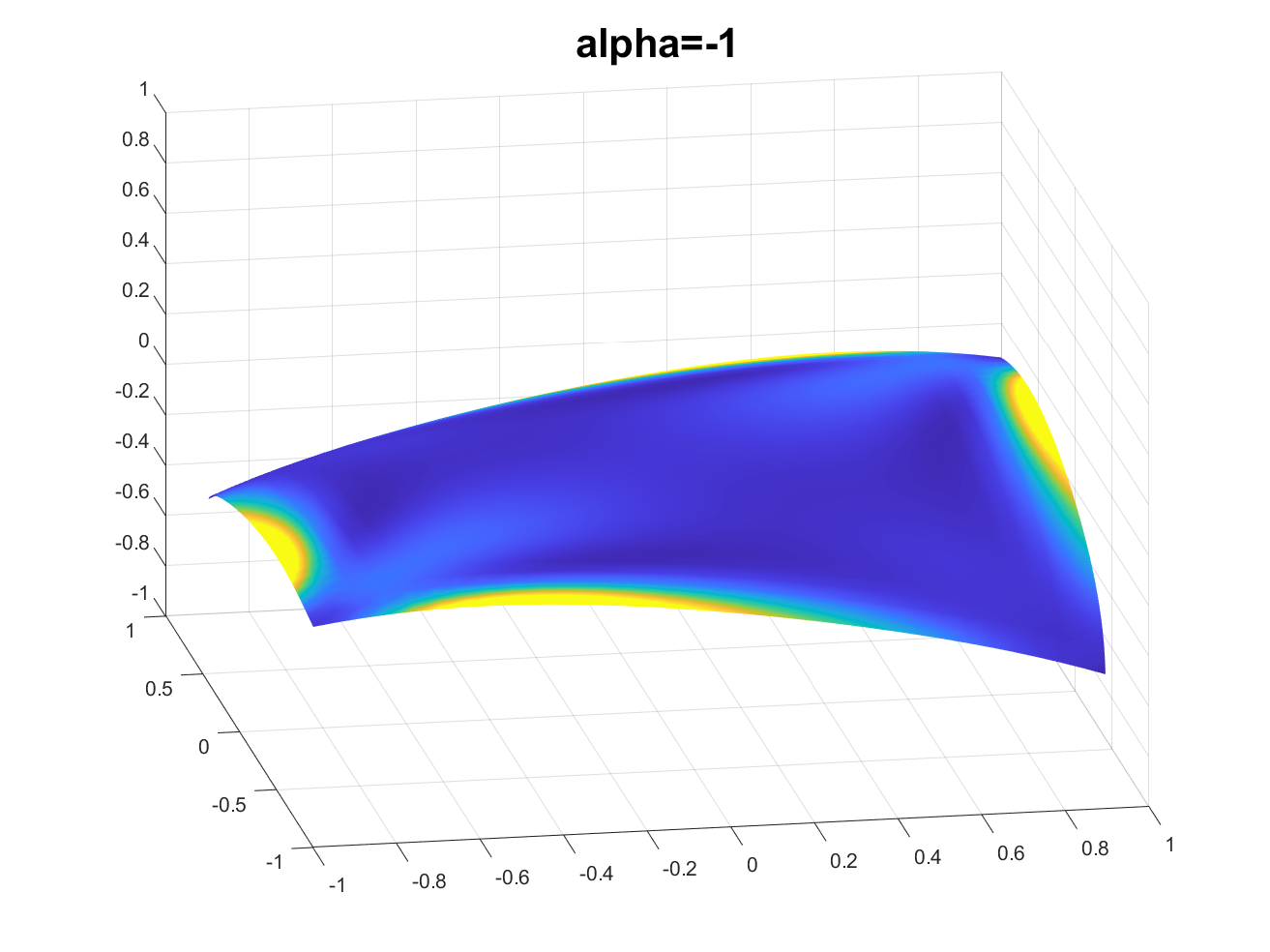}
\end{minipage}
\caption{Numerical solutions of \thref{discr_decoupled_grad_flow} for $\alpha\in\{1,0.7,0.3,-1\}$ (top to bottom) with $\theta=0$ (left) and $\theta=1000$ (right). The colors represent the strain density from dark (lowest) to bright (highest). {\cc The transition from $\alpha=1$ to $\alpha=-1$ induces a curvature inversion, where the value $\alpha=0$ leads to flat configurations for arbitrary $\theta$.}}
\label{fig:curvature_inversion}
\end{figure}

\subsection{Foldable Cardboard}\label{subsec:FoldableCardboard}

{\cc Next, we aim at analyzing the behavior of the bottom part of a deployed cardboard (compare with the right picture of Figure \ref{fig:photos_cardboard}), when the bistable mechanism is initiated by application of a force in a small radial area at the center of the plate.} We consider a singlelayer plate, i.e. $\alpha=0$, whose vertical component $w$ is clamped in a cylindrical shape via simple support boundary conditions on two opposing sides. No boundary conditions are enforced on the in-plane displacement $u$. The initial deformation is described by the data
\[
u_D(x)=\begin{bmatrix}
0\\0
\end{bmatrix},\quad w_D(x)=-\frac{1}{2}\left(x_2^2-1\right),\quad x\in [-1,1]^2.
\]
We set $\theta=10^{6}$ and induce a gradually increasing vertical force with a maximal value $f=-0.6\cdot10^6$ in the center $B_{1/10}(0)$ of the object. \thref{discr_decoupled_grad_flow} is applied to a regular cardboard without containing a crease line. The same experiment is carried out to a cardboard that is predamaged on a straight line $\If=\{0\}\times[-1,1]$ across its center. In the simulation this is achieved by assigning to nodes $z\in\Nh\cap\Ifh$ on the crease line two seperate gradient values $\nabla w_1(z)$ and $\nabla w_2(z)$ corresponding to each side $\Omega_1,\Omega_2$ of the crease. This is equivalent to solving the discrete decoupled gradient flow separately on each subdomain subject to the continuity constraint $w_1(z)=w_2(z)$ for all $z\in\Nh\cap\Ifh$. 
If $DF_i$ and $F_i$ denote the respective system matrices for the Newton scheme (\thref{newton_scheme}) restricted to $\Omega_i$, the continuity condition is enforced via Lagrange multipliers by solving the modified linear system of equations
\[
\begin{bmatrix}
DF_1 & 0 & I_1\T \\
0 & DF_2 & -I_2\T \\
I_1 & -I_2 & 0
\end{bmatrix}
\begin{bmatrix}
\widetilde{w}_1\\
\widetilde{w}_2\\
\Lambda
\end{bmatrix}
=
\begin{bmatrix}
F_1\\
F_2\\
0
\end{bmatrix}.
\]
Here, $\widetilde{w}_i=w^{k}_i-w^{k-1}_i$ denotes the $k$-th update on each subdomain $\Omega_i$ containing the values $\widetilde{w}_i(z)$ and $\nabla \widetilde{w}_i(z)$ for all $z\in\Nh\cap\,\overline{\Omega}_i$, while the vector $\Lambda$ includes the Lagrange multipliers. The matrices $I_1,I_2$ contain the values $0$ or $1$, encoding the condition $\widetilde{w}_1(z)=\widetilde{w}_2(z)$ for all $z\in\Nh\cap\,\Ifh$. If the previous iterates $w^{k-1}_1$ and $w^{k-1}_2$ are continuous on $z\in\Nh\cap\Ifh$ the condition guarantees the pointwise continuity $w^k_1(z)=w^k_2(z)$ of the next iterates for $z\in\Nh\cap\Ifh$.
Numerical solutions for the cardboard with and without crease line are visualized in Figure \ref{fig:cardboard_radial}. The numerical simulations show that both plates are pushed downwards in the center. Initially, the cardboards are indented in a radially symmetric area. The indented area of the undamaged cardboard transforms to an oval shape and the natural formation of a crease line across the plate center can be observed. In the deformation profile of the predamaged cardboard a rhombic shape is visible instead. The indented areas are surrounded by lines of large curvature that emenate from the endpoints of the middle crease to the boundary of the plate. This is especially apparent from Figure \ref{fig:cardboard_bending_radial} which contains top views of the numerical solutions shaded with respect to the elementwise bending energy densities. 
Figure \ref{fig:energy_cap_fold} shows the developments of the total elastic energies of the respective numerical solutions. 

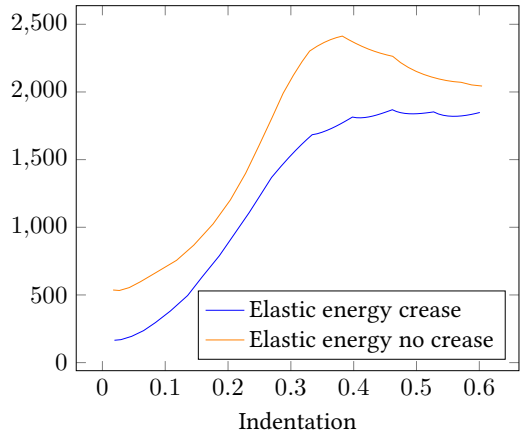
\begin{figure}[H]
\centering
\begin{tikzpicture}[scale=0.85]
\begin{axis}[
    xlabel={Indentation},
    legend pos=south east,
    ymajorgrids=false,
    xmajorgrids=false,
    grid style=dashed,
    legend style={nodes={scale=1, transform shape}},
    legend cell align={left},
]
\addplot[
    color=blue
    ]
    table [x expr=-\thisrow{indentation}+0.5,y expr=\thisrow{ener_pcw}*1e6] {material/cardboard_fold/energy_fold_incforce.txt};
    \legend{Elastic energy crease}
\addplot[
    color=orange
    ]
    table [x expr=-\thisrow{indentation}+0.5,y expr=\thisrow{ener}*1e6] {material/cardboard_fold/energy_nofold_incforce.txt};
    \addlegendentry{Elastic energy no crease}   
\end{axis}
\end{tikzpicture}

\caption{Energy development of the sequence $(w_h^k,u_h^k)_{k\geq0}$ with and without crease line. In both cases we observe an energy barrier but the total elastic energy of the model with crease is persistently lower.}
\label{fig:energy_cap_fold}
\end{figure}

In both cases we observe an energy barrier between the unindented, cylindrical initial configuration and the indented configurations that contain large flat areas. The energy barrier of the plate without predamage is much larger due to an increased elastic resistance compared to the plate with crease. 
Interestingly, the distance of the wrinkles perpendicular to the formed line match the length of a crack that appeared in a real cardboard after repeated actuation, cf. Figure \ref{fig:photos_cardboard}. This suggests a connection between the two phenomena.

\begin{figure}[p]
\centering
\begin{minipage}{0.49\textwidth}
\includegraphics[width=0.97\textwidth]{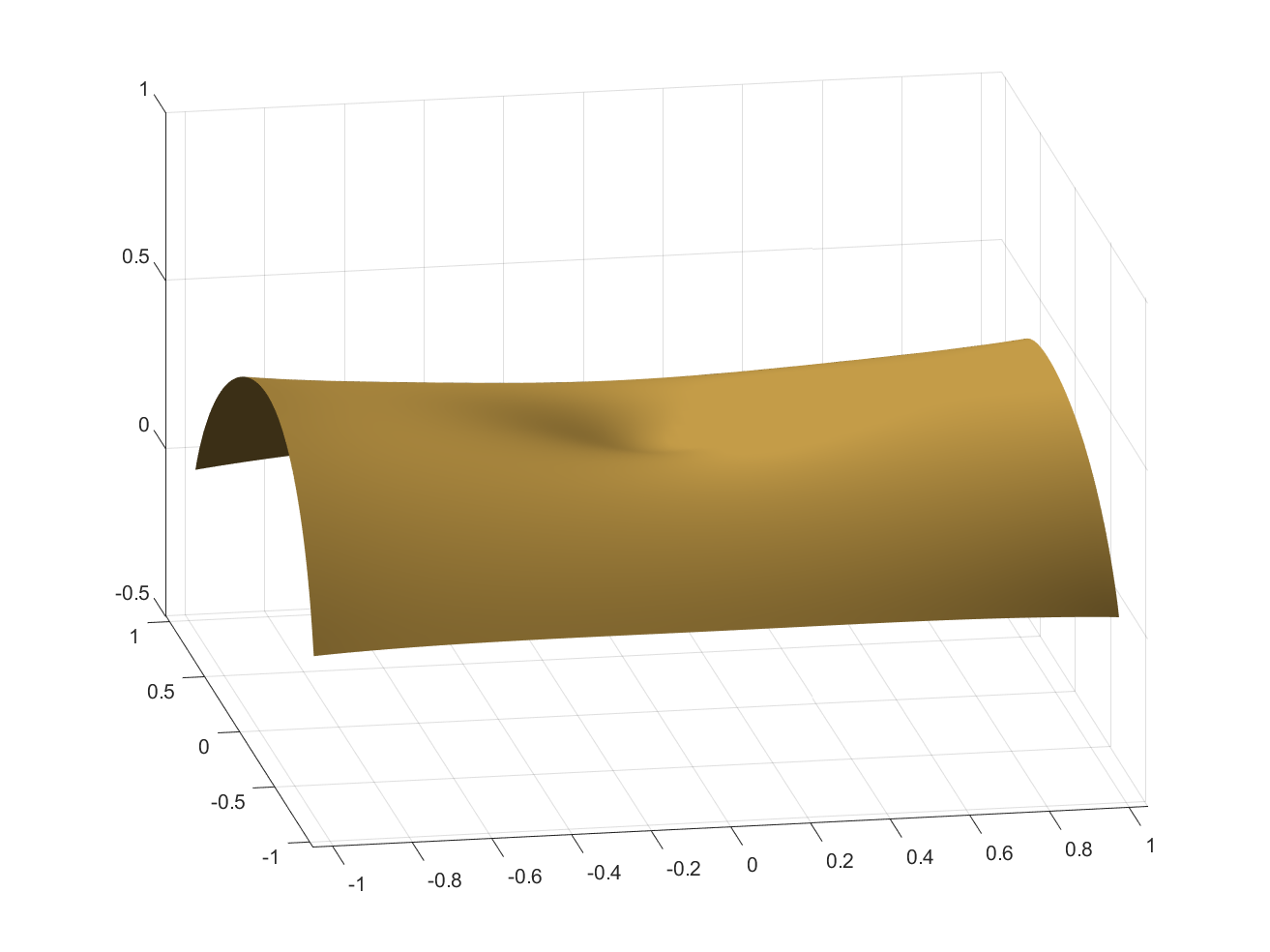}
\end{minipage}
\begin{minipage}{0.49\textwidth}
\includegraphics[width=0.97\textwidth]{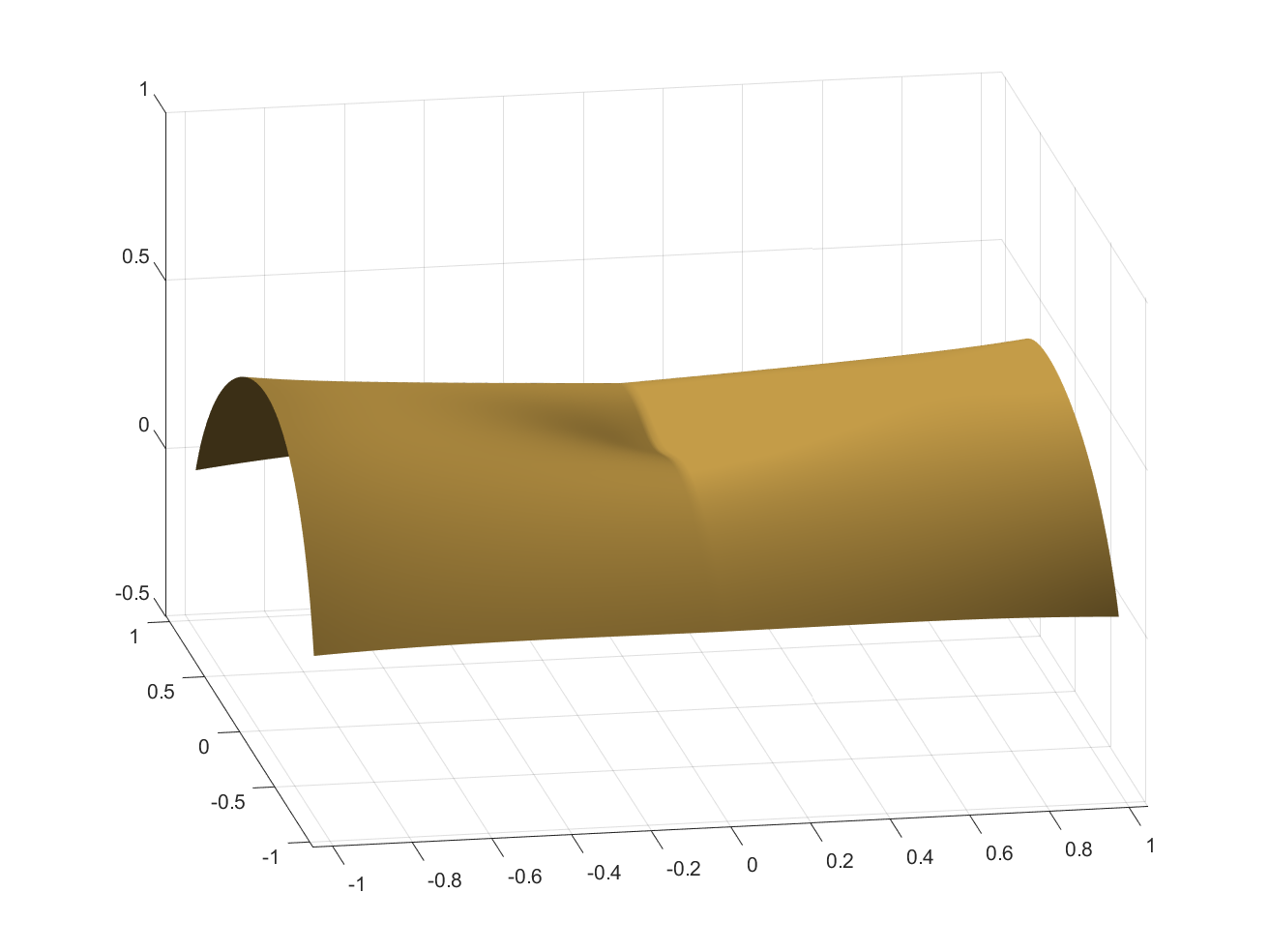}
\end{minipage}
\begin{minipage}{0.49\textwidth}
\includegraphics[width=0.97\textwidth]{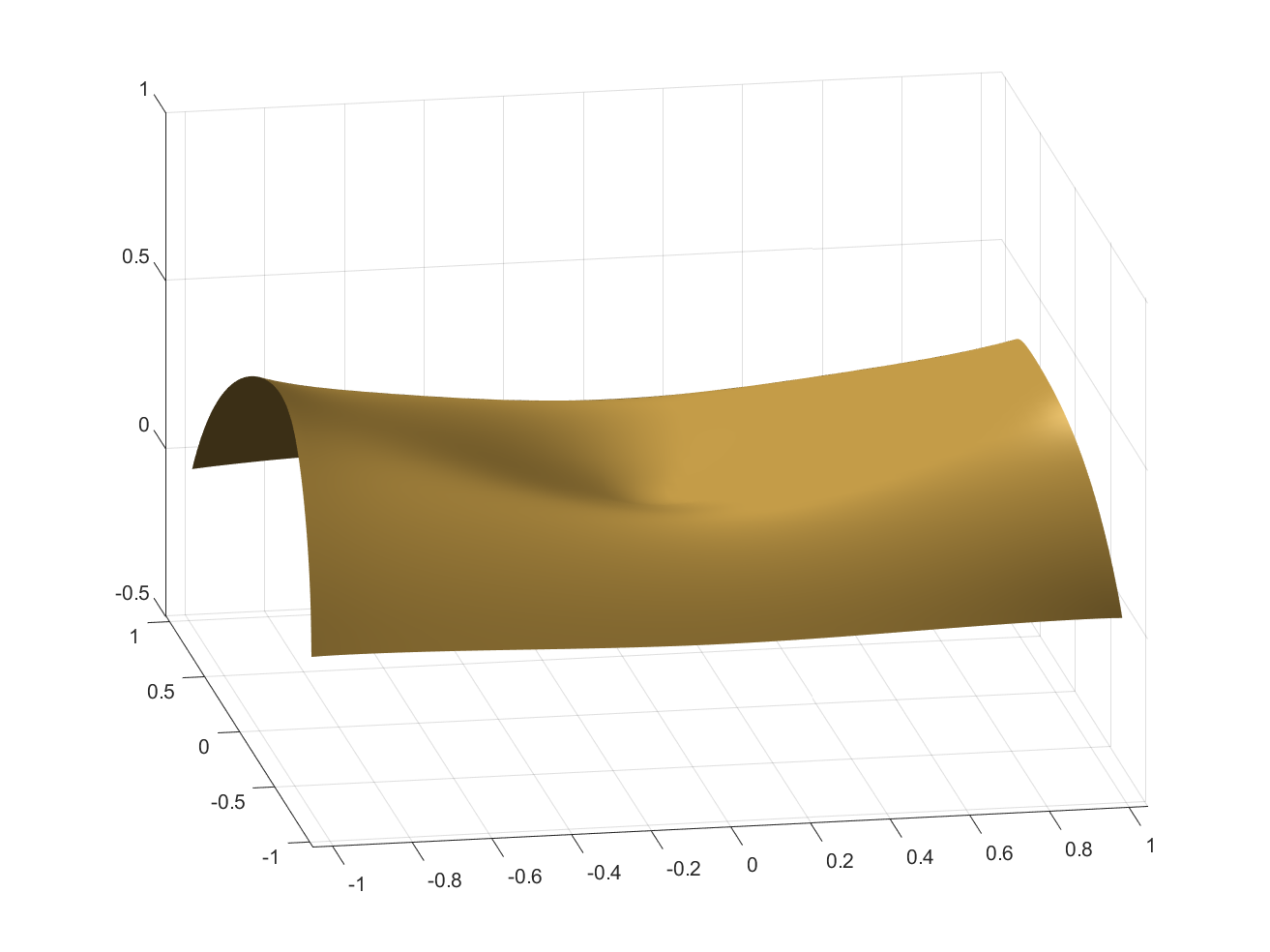}
\end{minipage}
\begin{minipage}{0.49\textwidth}
\includegraphics[width=0.97\textwidth]{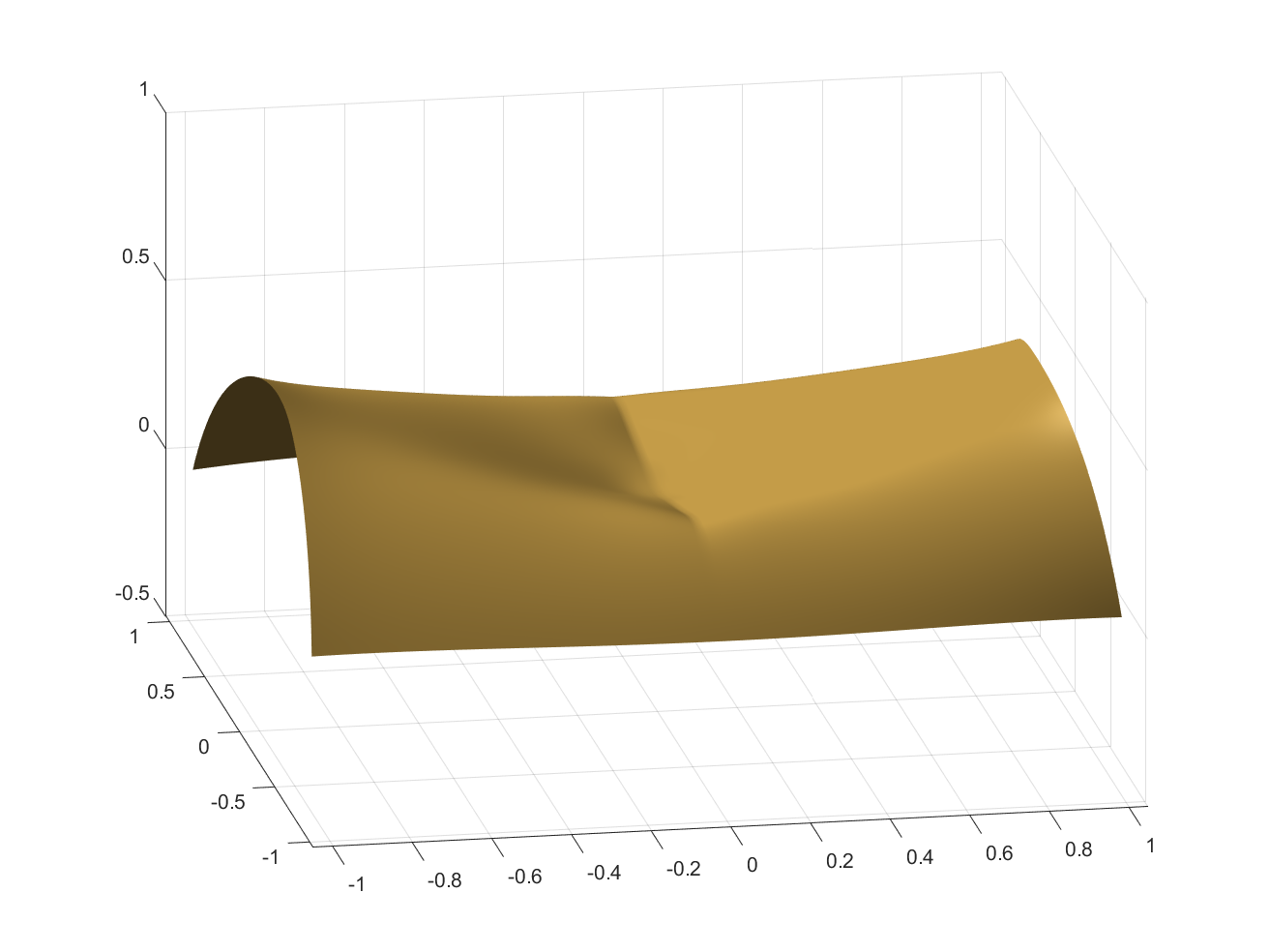}
\end{minipage}
\begin{minipage}{0.49\textwidth}
\includegraphics[width=0.97\textwidth]{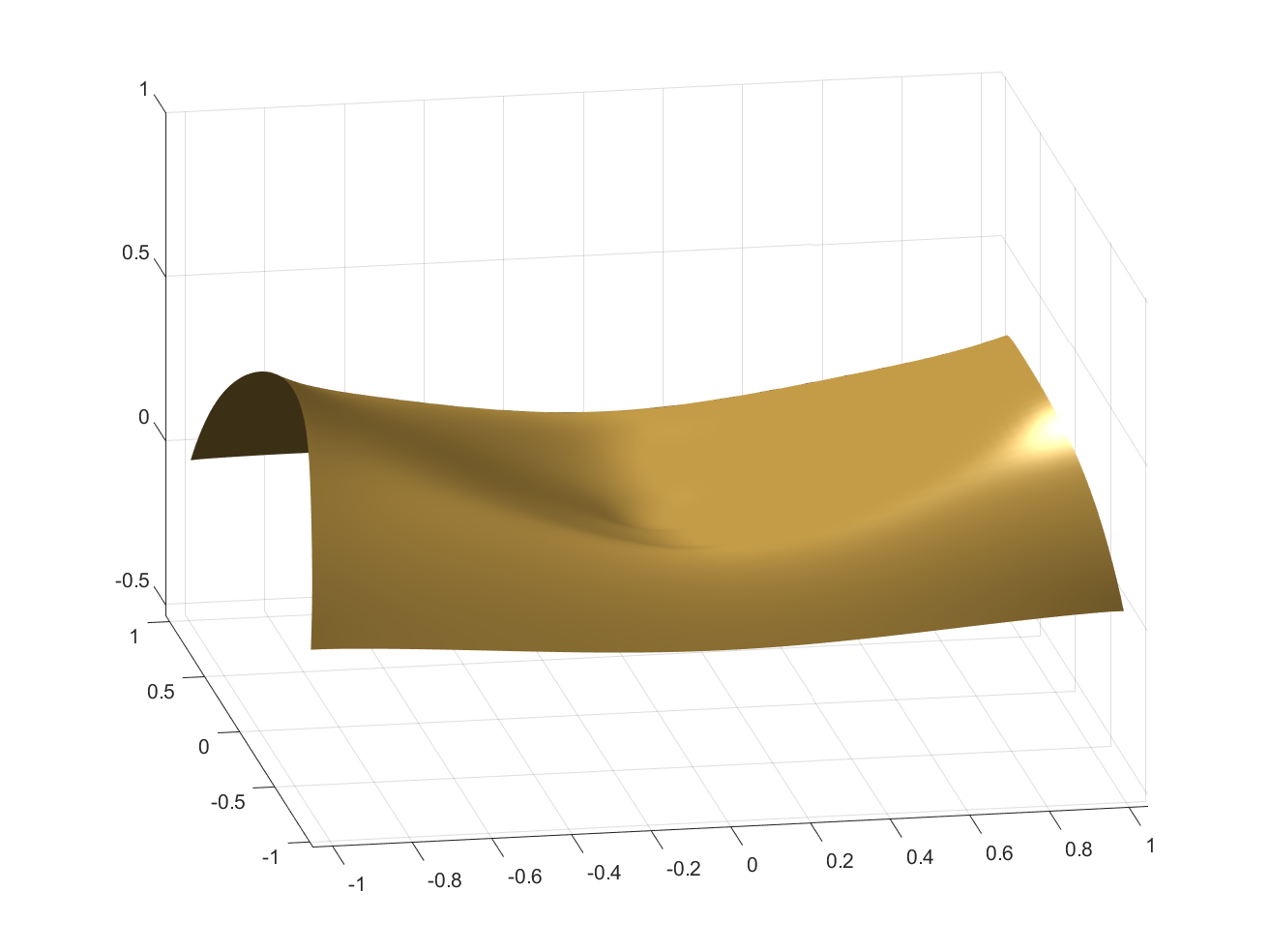}
\end{minipage}
\begin{minipage}{0.49\textwidth}
\includegraphics[width=0.97\textwidth]{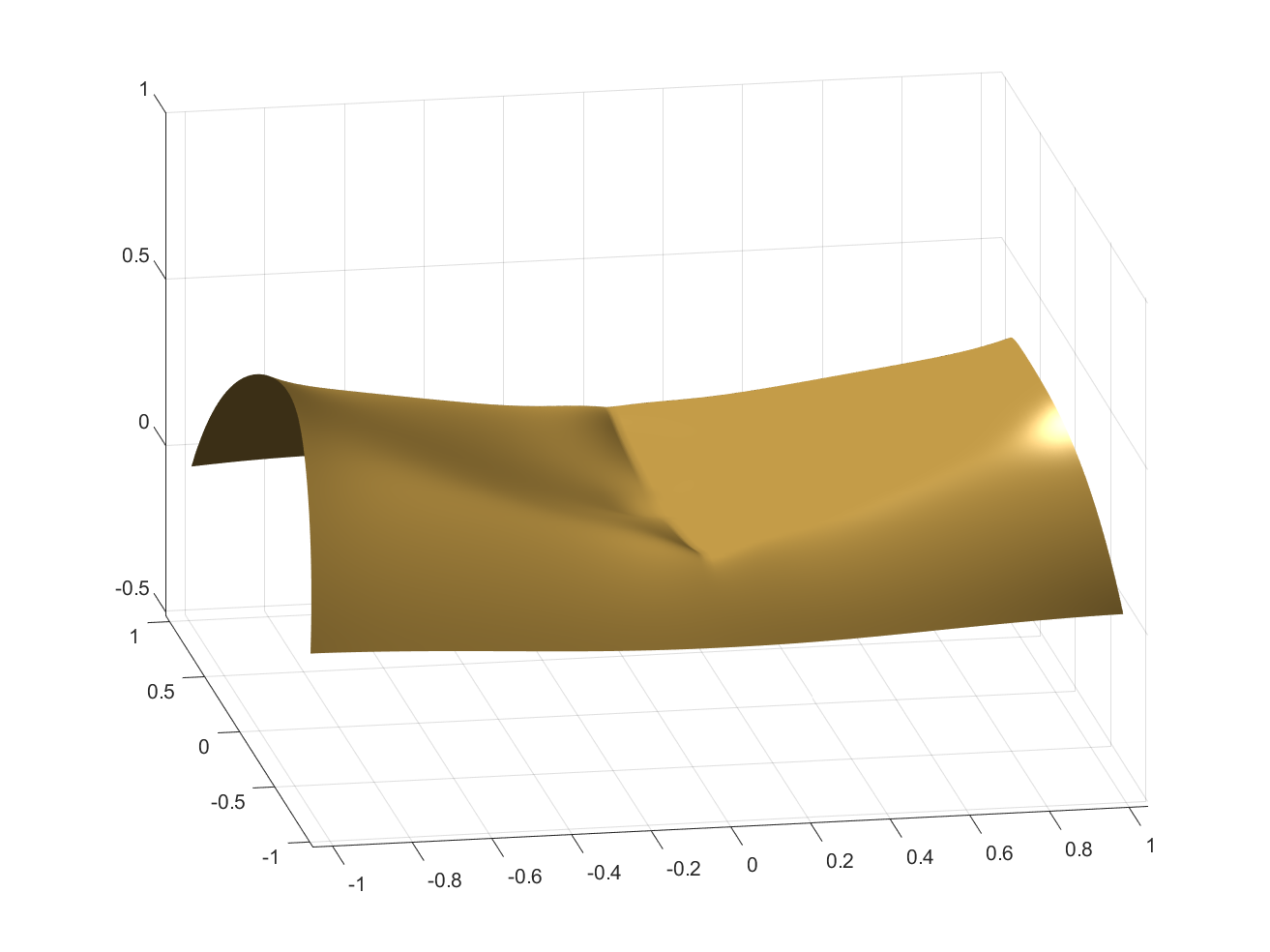}
\end{minipage}
\begin{minipage}{0.49\textwidth}
\includegraphics[width=0.97\textwidth]{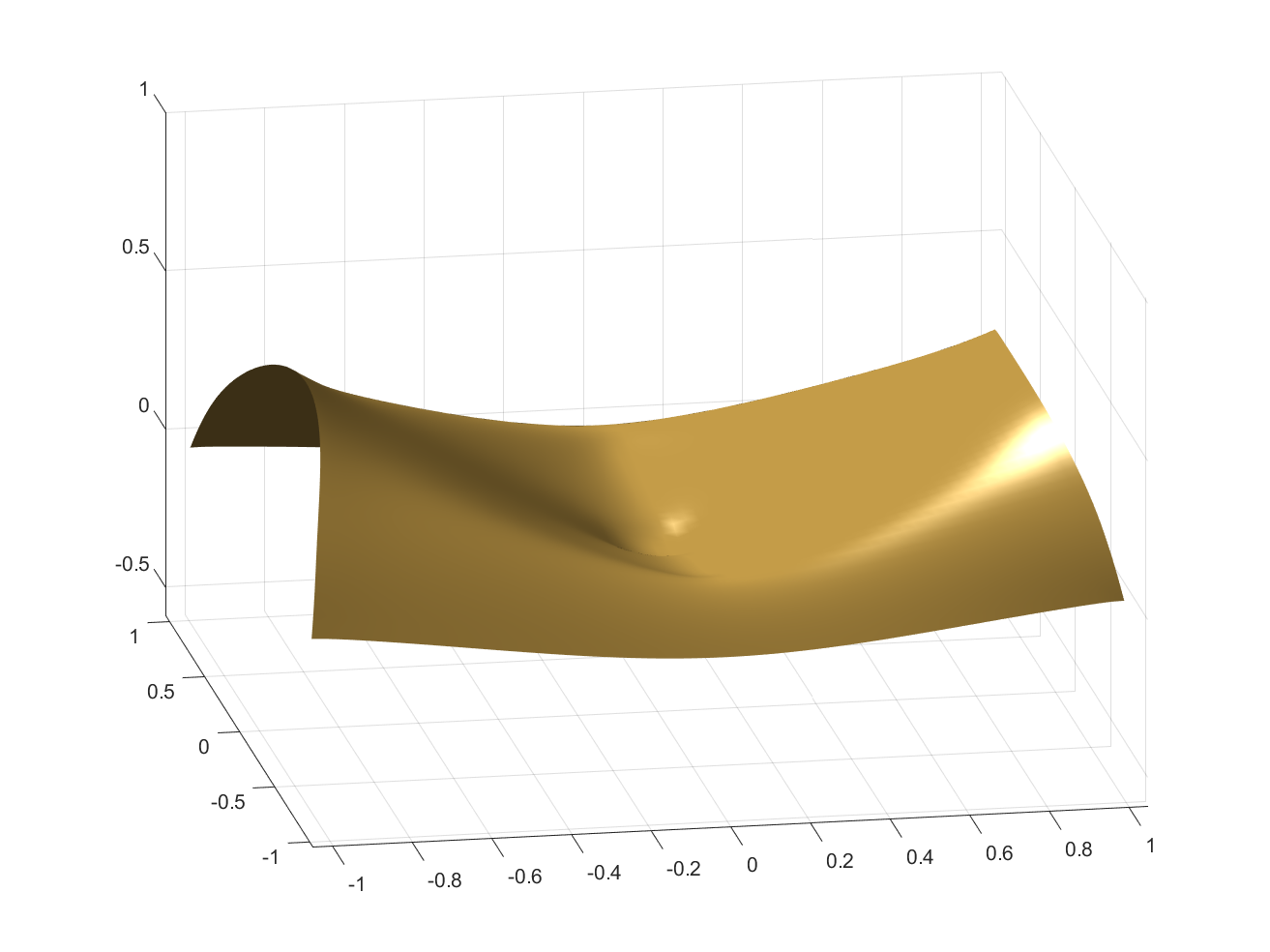}
\end{minipage}
\begin{minipage}{0.49\textwidth}
\includegraphics[width=0.97\textwidth]{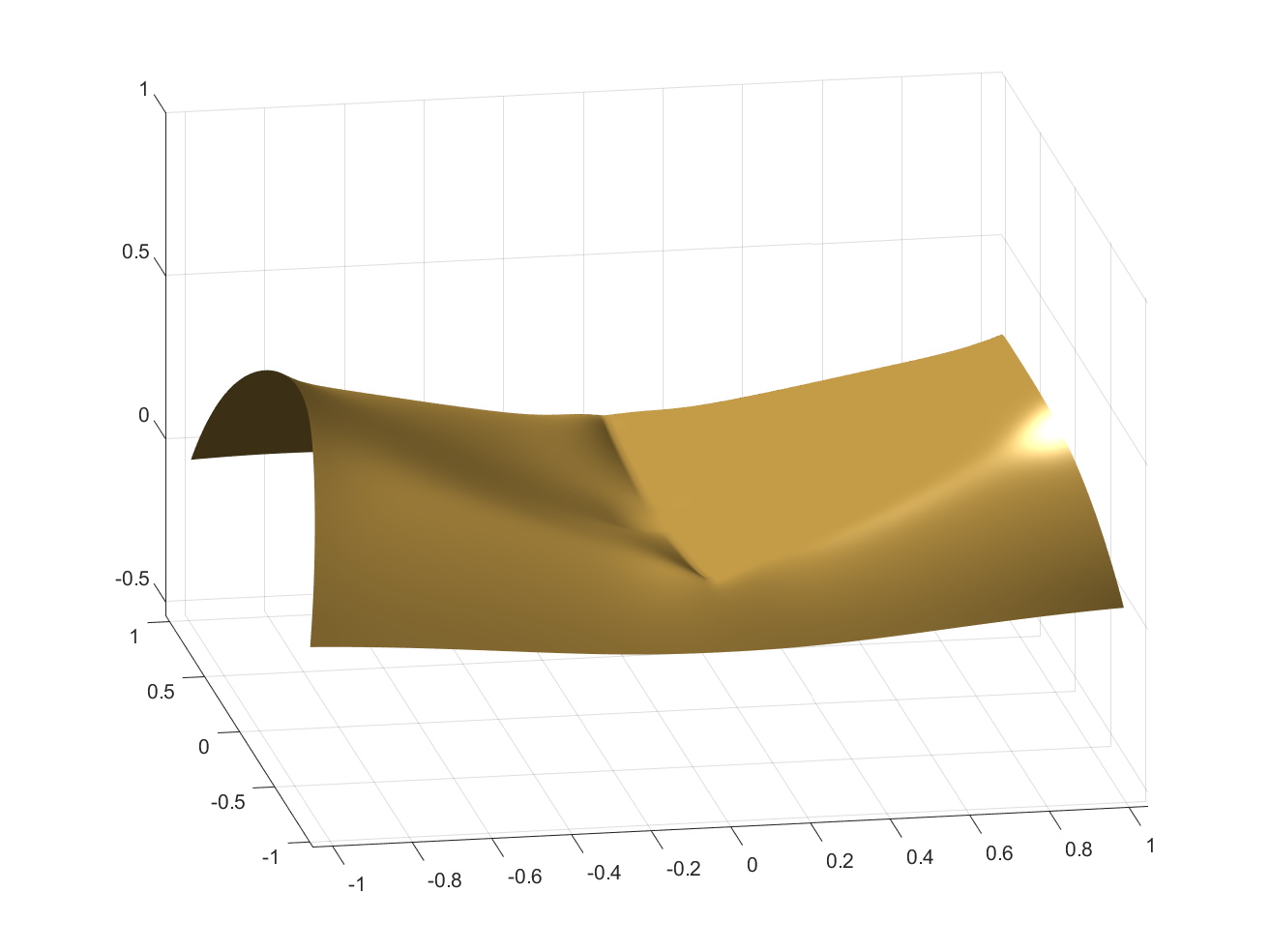}
\end{minipage}
\caption{Numerical solutions for a cardboard without crease line (left) and with crease line (right) for a radial force acting in the plate center after 20, 30, 40 and 50 iterations of the algorithm. {\cc The deformation of the undamaged cardboard leads to an oval indentation profile whereas the predamaged cardboard obtains a rhombic indentation profile.}}
\label{fig:cardboard_radial}
\end{figure}

\begin{figure}[p]
\centering
\begin{minipage}{0.49\textwidth}
\includegraphics[width=0.9\textwidth]{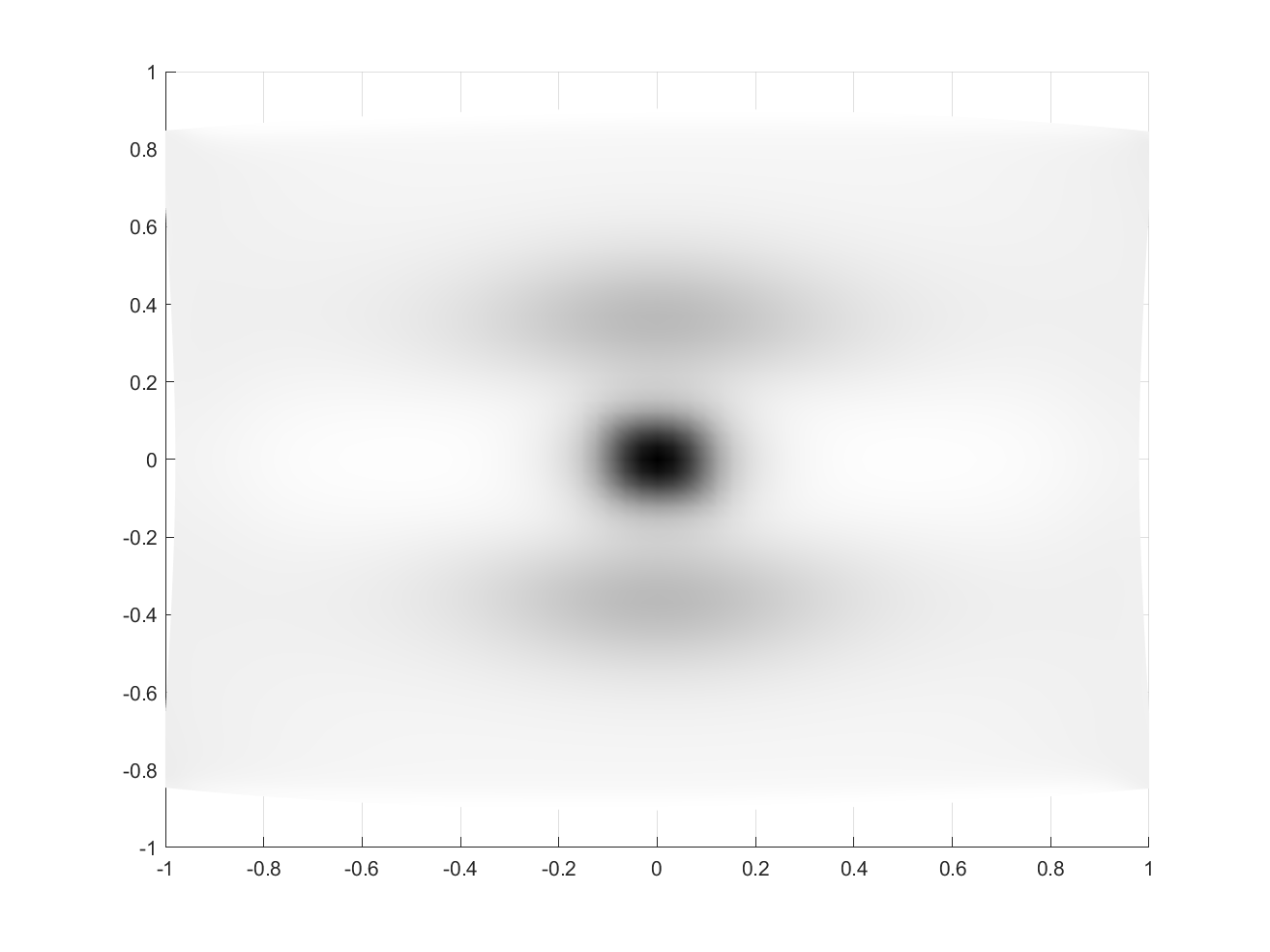}
\end{minipage}
\begin{minipage}{0.49\textwidth}
\includegraphics[width=0.9\textwidth]{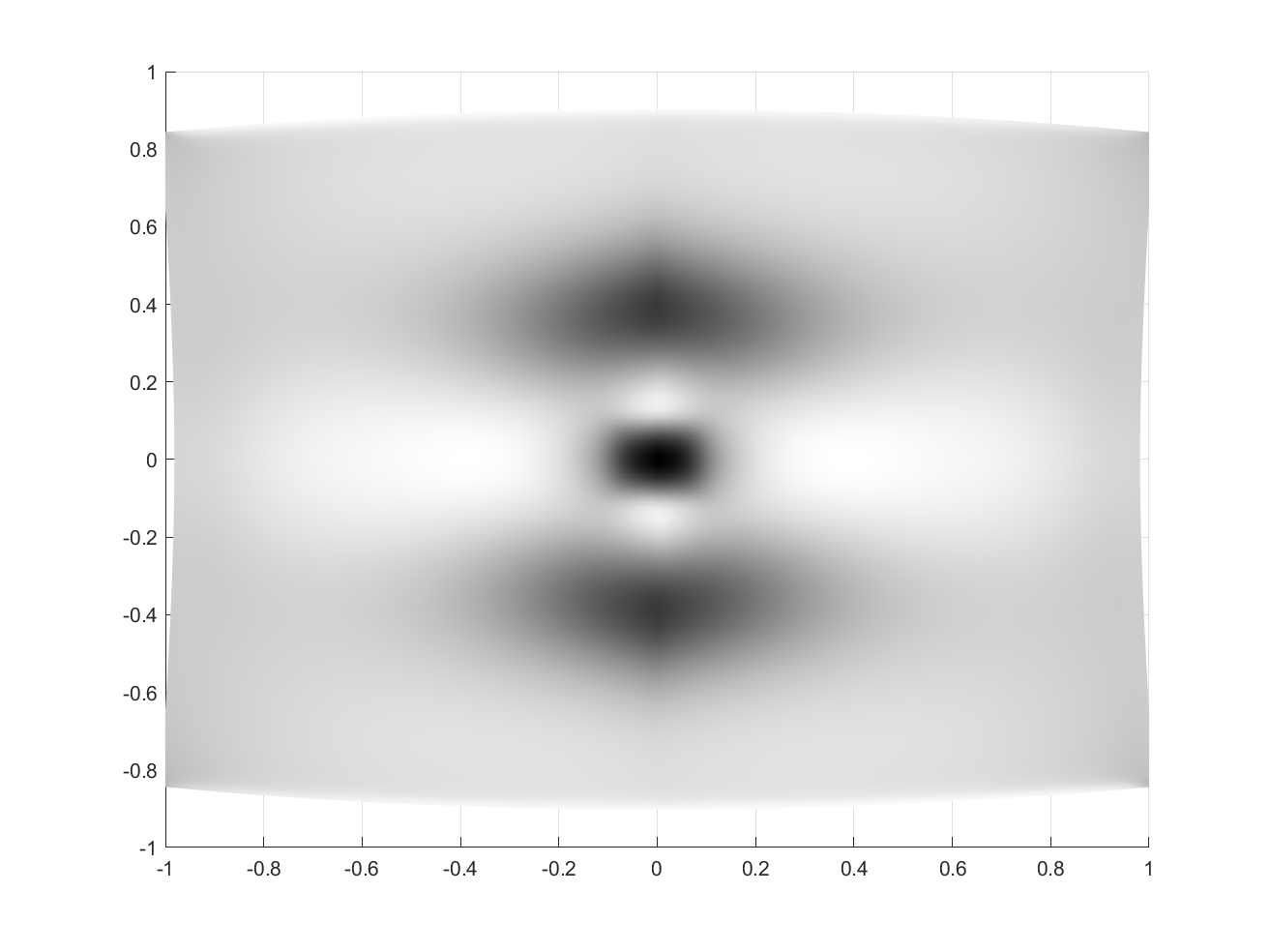}
\end{minipage}
\begin{minipage}{0.49\textwidth}
\includegraphics[width=0.9\textwidth]{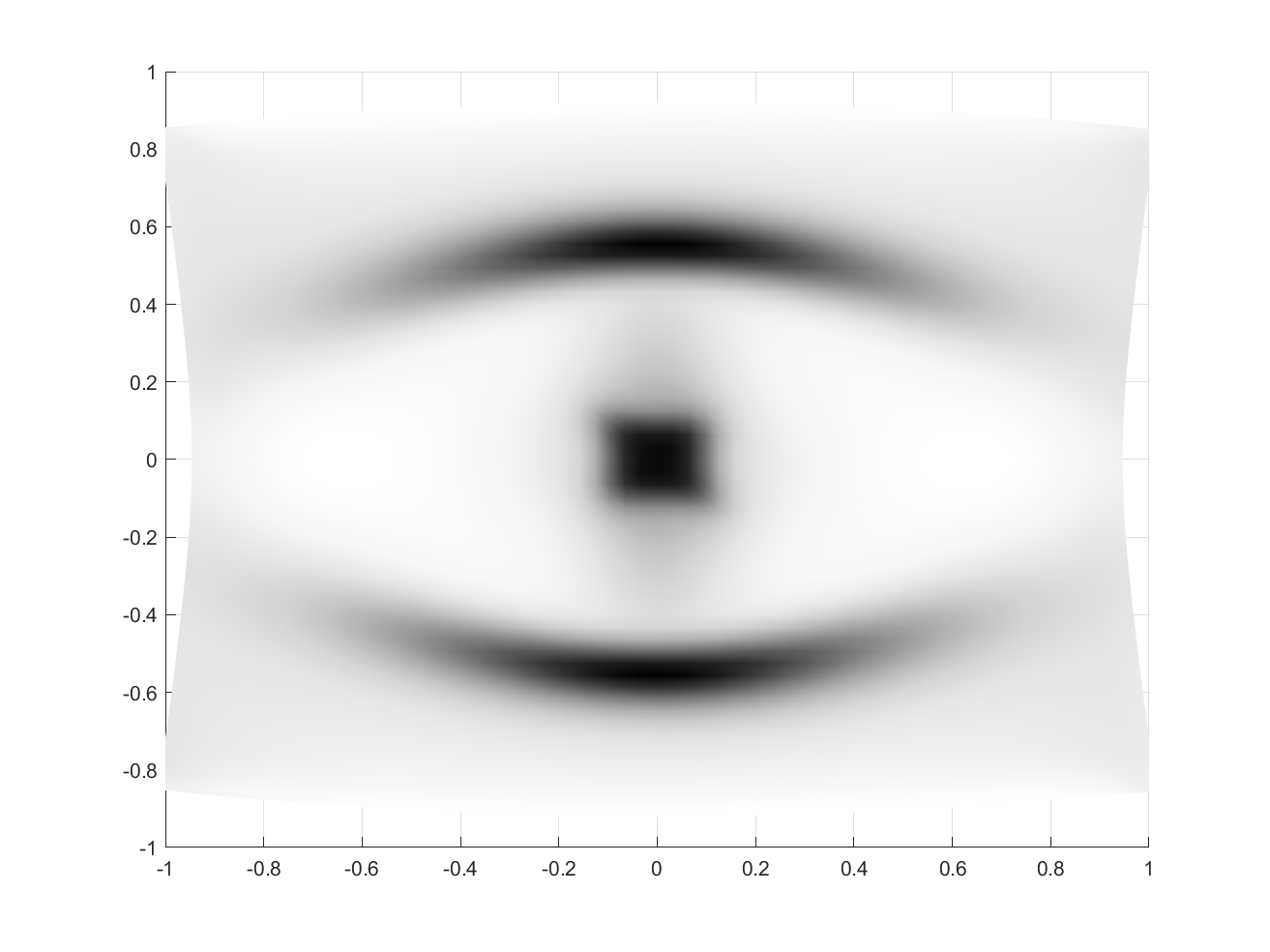}
\end{minipage}
\begin{minipage}{0.49\textwidth}
\includegraphics[width=0.9\textwidth]{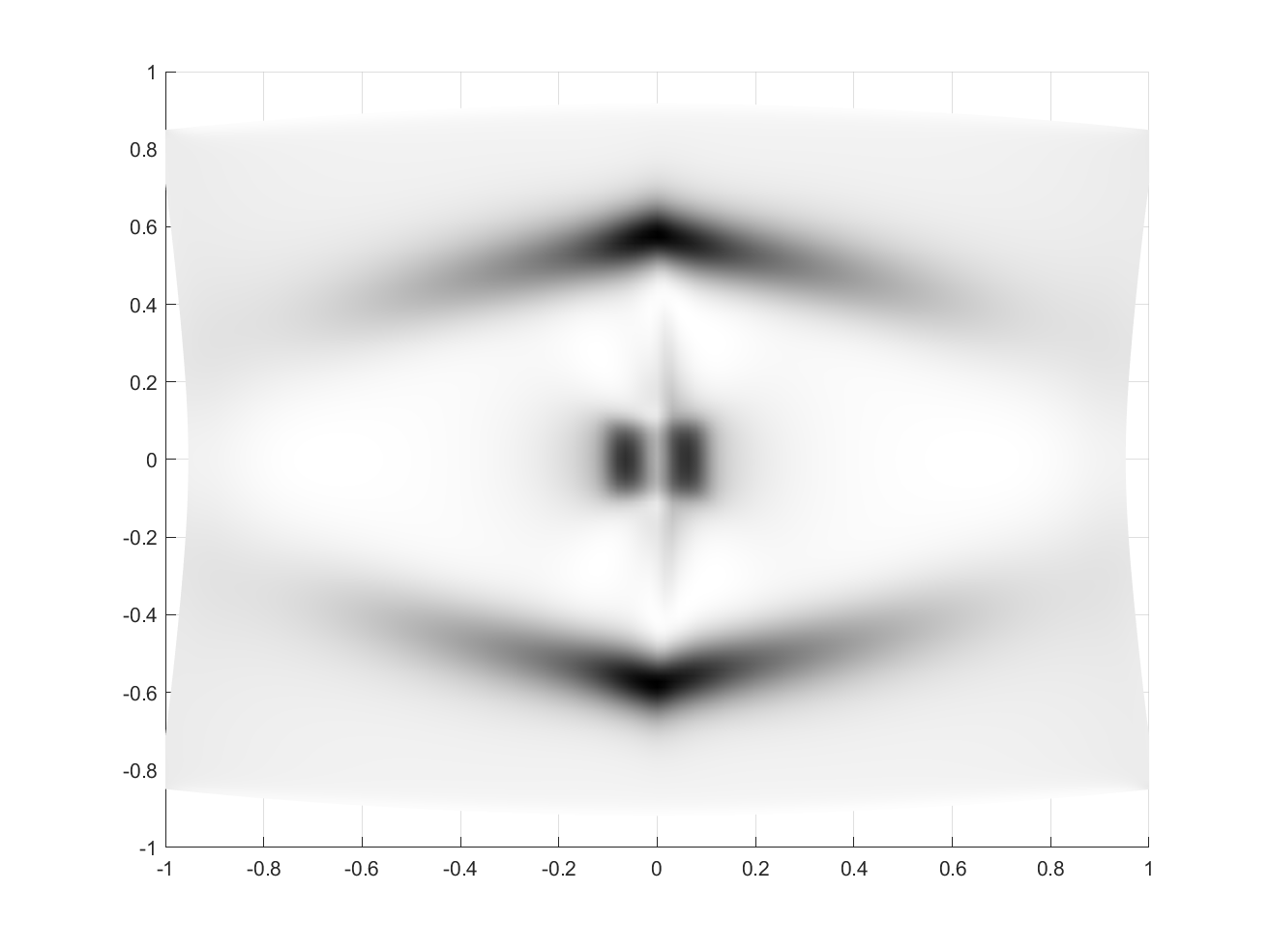}
\end{minipage}
\begin{minipage}{0.49\textwidth}
\includegraphics[width=0.9\textwidth]{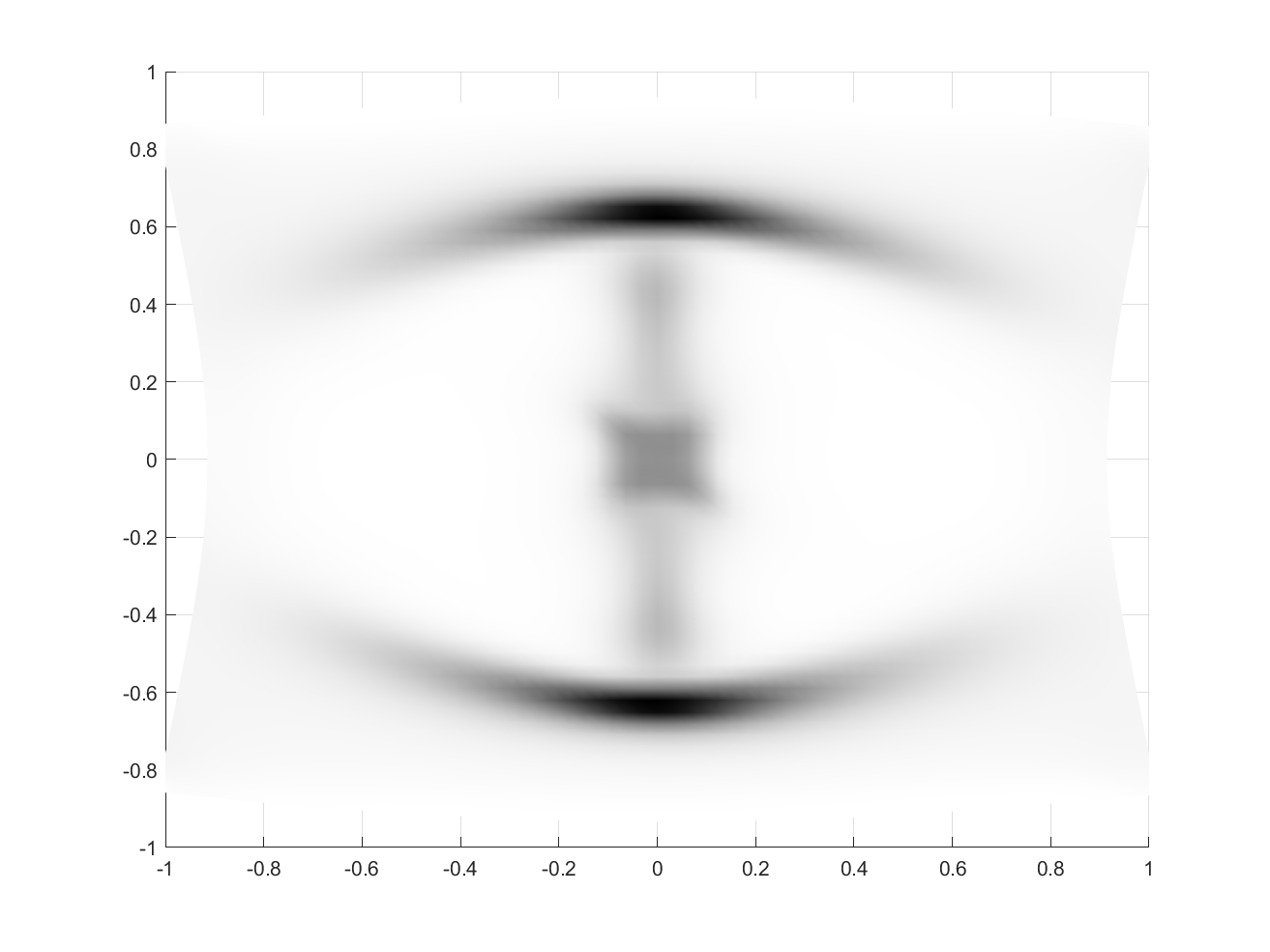}
\end{minipage}
\begin{minipage}{0.49\textwidth}
\includegraphics[width=0.9\textwidth]{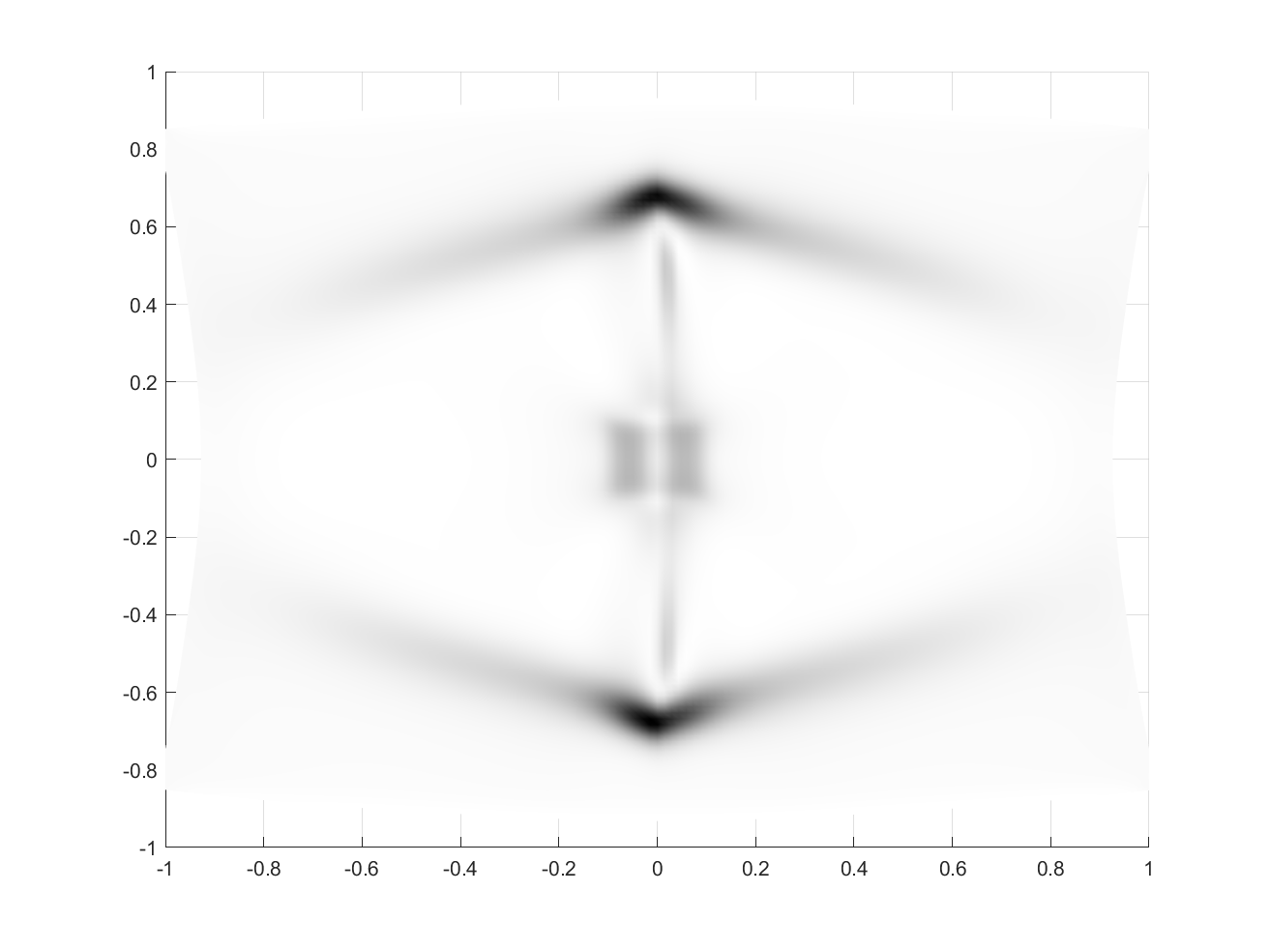}
\end{minipage}
\begin{minipage}{0.49\textwidth}
\includegraphics[width=0.9\textwidth]{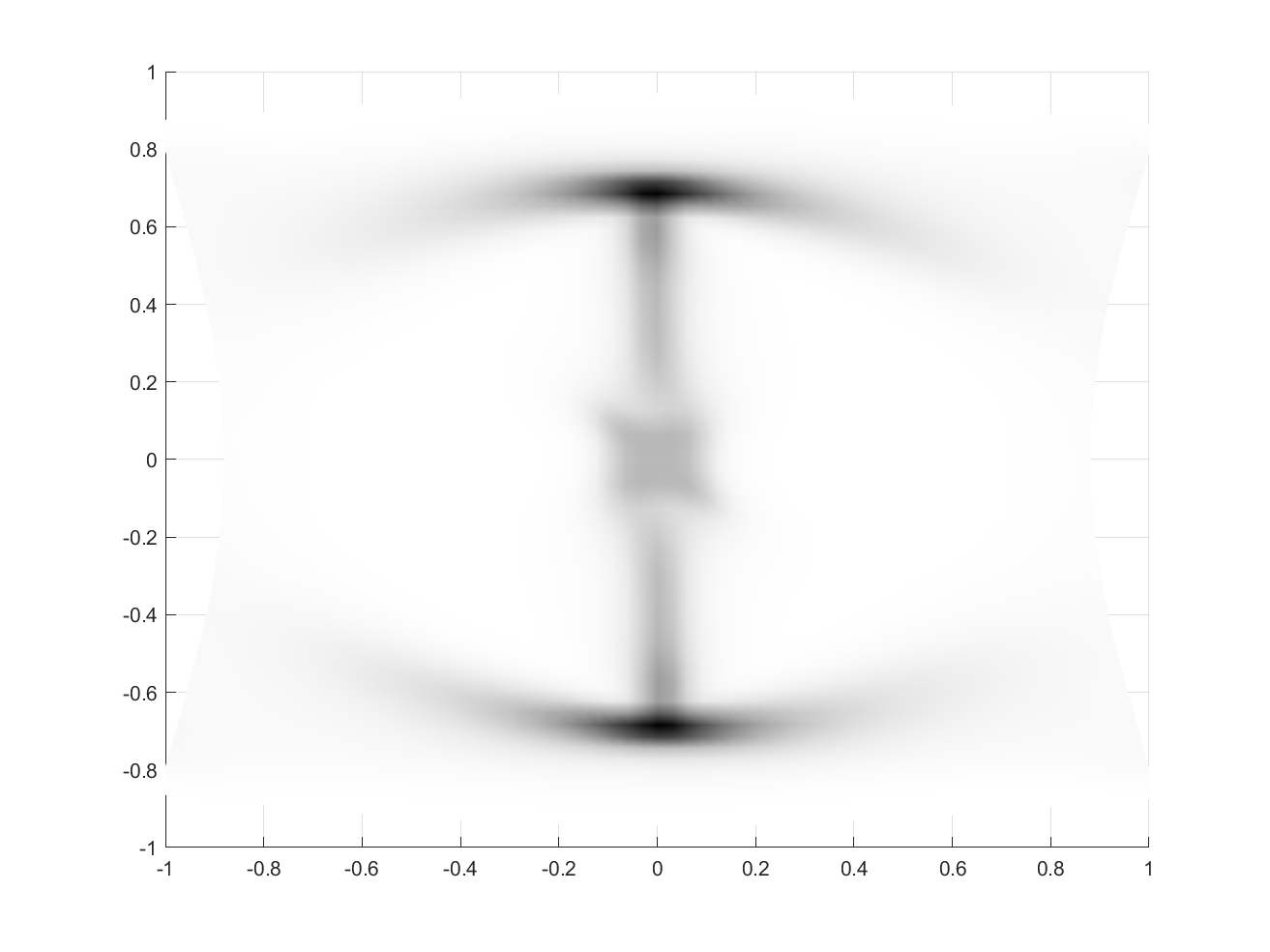}
\end{minipage}
\begin{minipage}{0.49\textwidth}
\includegraphics[width=0.9\textwidth]{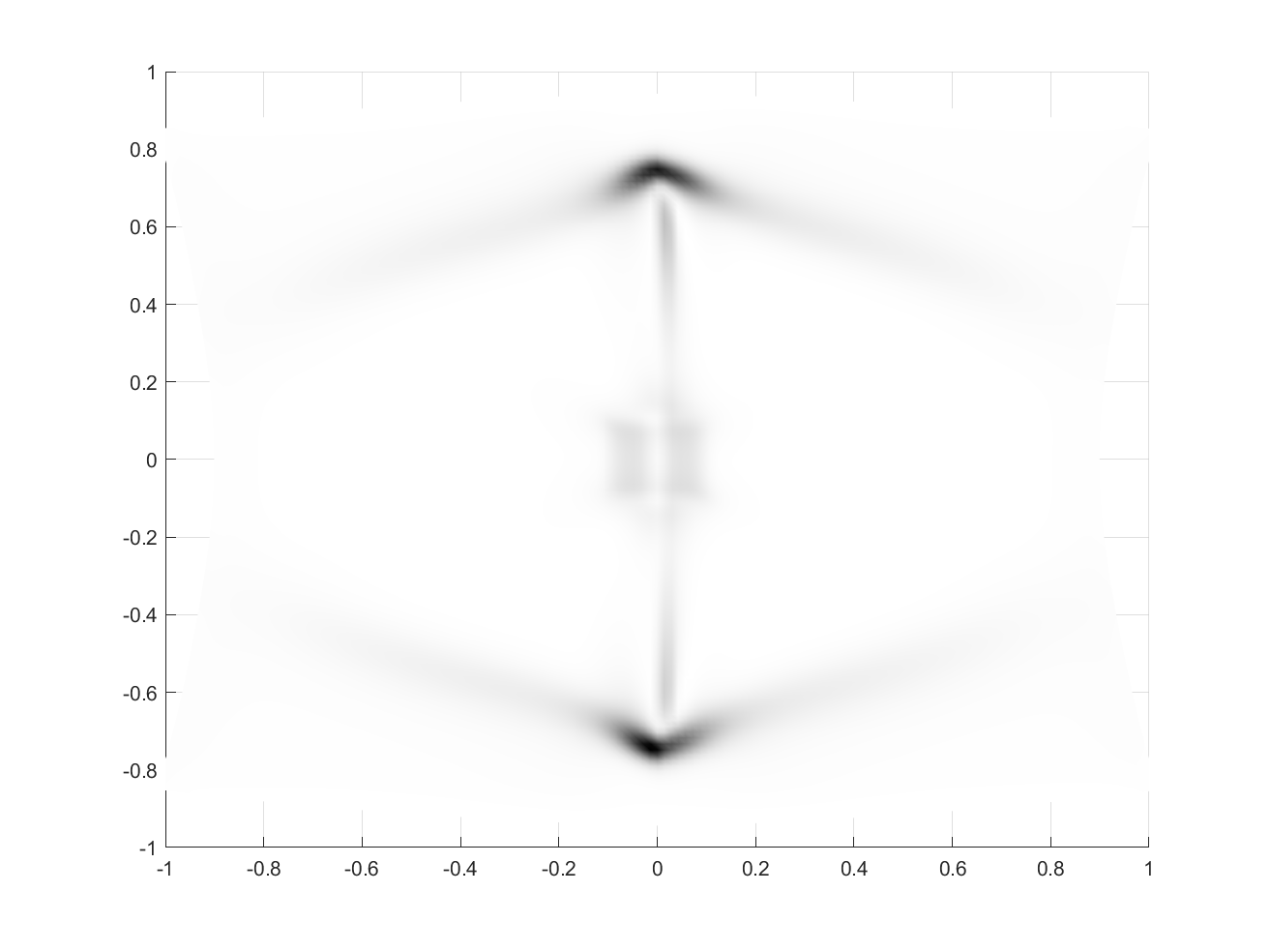}
\end{minipage}
\caption{Top views of the numerical solutions for a cardboard without crease line (left) and with crease line (right) for a radial force acting in the plate center after 20, 30, 40 and 50 iterations of the algorithm with a shading of the bending energy densities $|D^2_hw_h|$. {\cc In the undamaged cardboard a natural formation of a folding line across the plate center can be observed. In both the undamaged and predamaged models, lines of large curvature appear across the (formed) folding line close to the boundary of the plates. The distance between these creases and the boundary seems to match the crack length observed in the left picture of Figure \ref{fig:photos_cardboard}.}}
\label{fig:cardboard_bending_radial}
\end{figure}

\subsection{Bilayer Plate Folding}
Finally, we combine elastic effects of bilayer plates with physical phenomena related to folding. The motivation stems from the fact that simulations of mechanisms like Venus flytraps via bilayer models are presumably more realistic than simulations obtained from singlelayer models which depend on the artificial application of boundary conditions or forcing terms. For instance, bilayer models are capable of describing swelling effects that enable the actuation of carnivorous plants. We give a brief overview over different configurations for various parameters $\alpha_1,\alpha_2\in\mathbb{R}$ corresponding to the subdomains $\Omega_1,\Omega_2$ when combined with different geometries of given crease curves $\If$ that separate the two regions. The discrete crease is either assumed to be a straight line $\Ifh=\{0\}\times[-1,1]$ 
or a piecewise linear approximation of the arc 
\[
\If(t)=\left(\frac{1}{6}\sin(\pi t)+\frac{1}{3},t\right),\quad t\in[-1,1].
\]
Figure \ref{fig:bilayer_folding} contains numerical solutions for straight and piecewise linear crease geometries paired with various combinations of parameters $\alpha_1,\alpha_2\in\{-1,0,1\}$ and $\theta=1$. 
We observe drastic differences in the deformation profiles between the two crease geometries. Interestingly, the straight crease with $\alpha_1,\alpha_2=1$ leads to a kink while the curved crease leads to a nearly smooth deformation without significant jumps in the deflection gradient. In the last two tests a spontaneous curvature is only applied on the left part of the plate. The curved crease geometry leads to a snapping mechanism while the deflection on the right side remains nearly flat when a straight crease is included. 


\begin{figure}[p]
\begin{minipage}{0.49\textwidth}
\centering
\includegraphics[width=.90\textwidth]{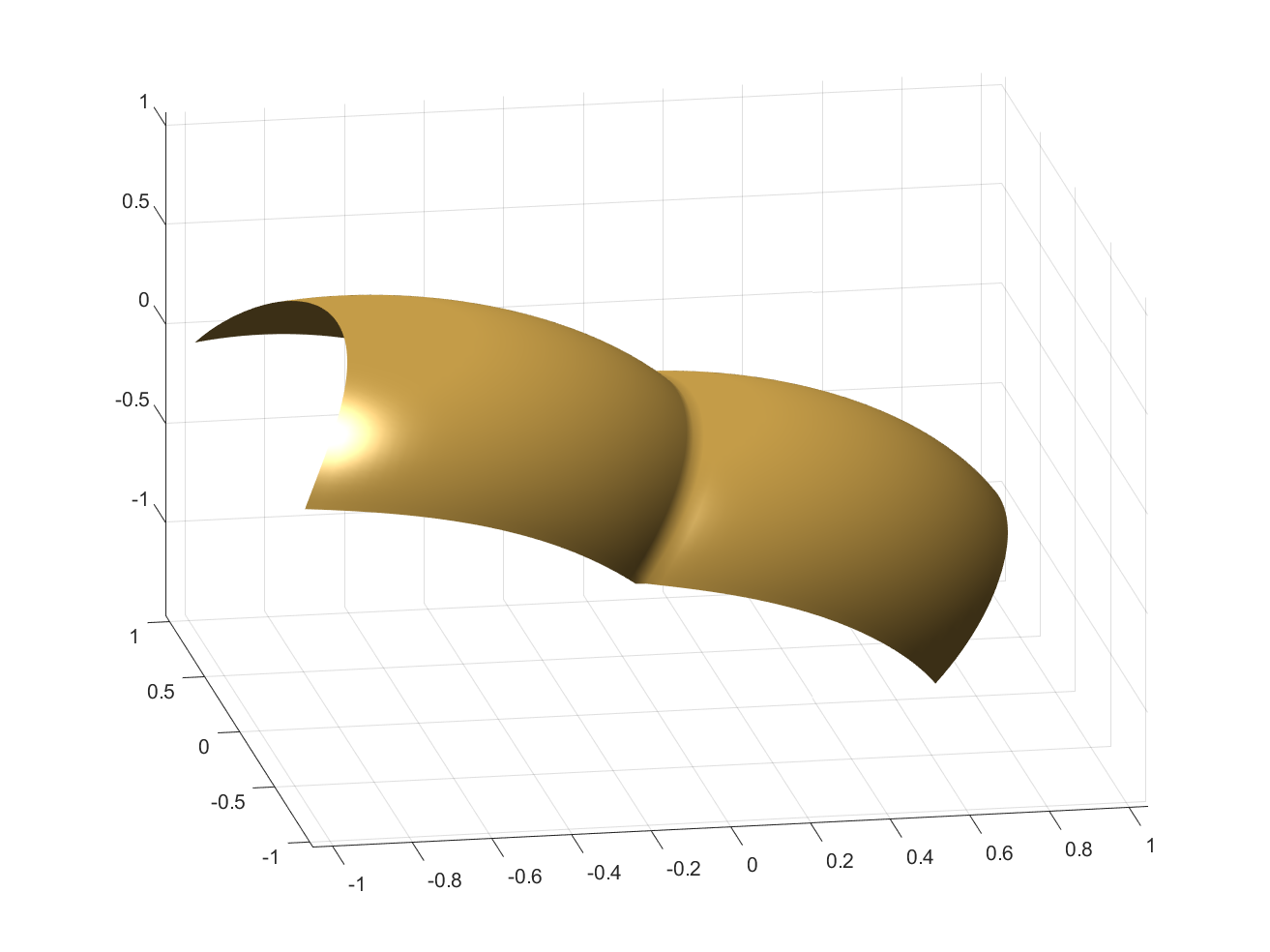}
\end{minipage}
\begin{minipage}{0.49\textwidth}
\centering
\includegraphics[width=.90\textwidth]{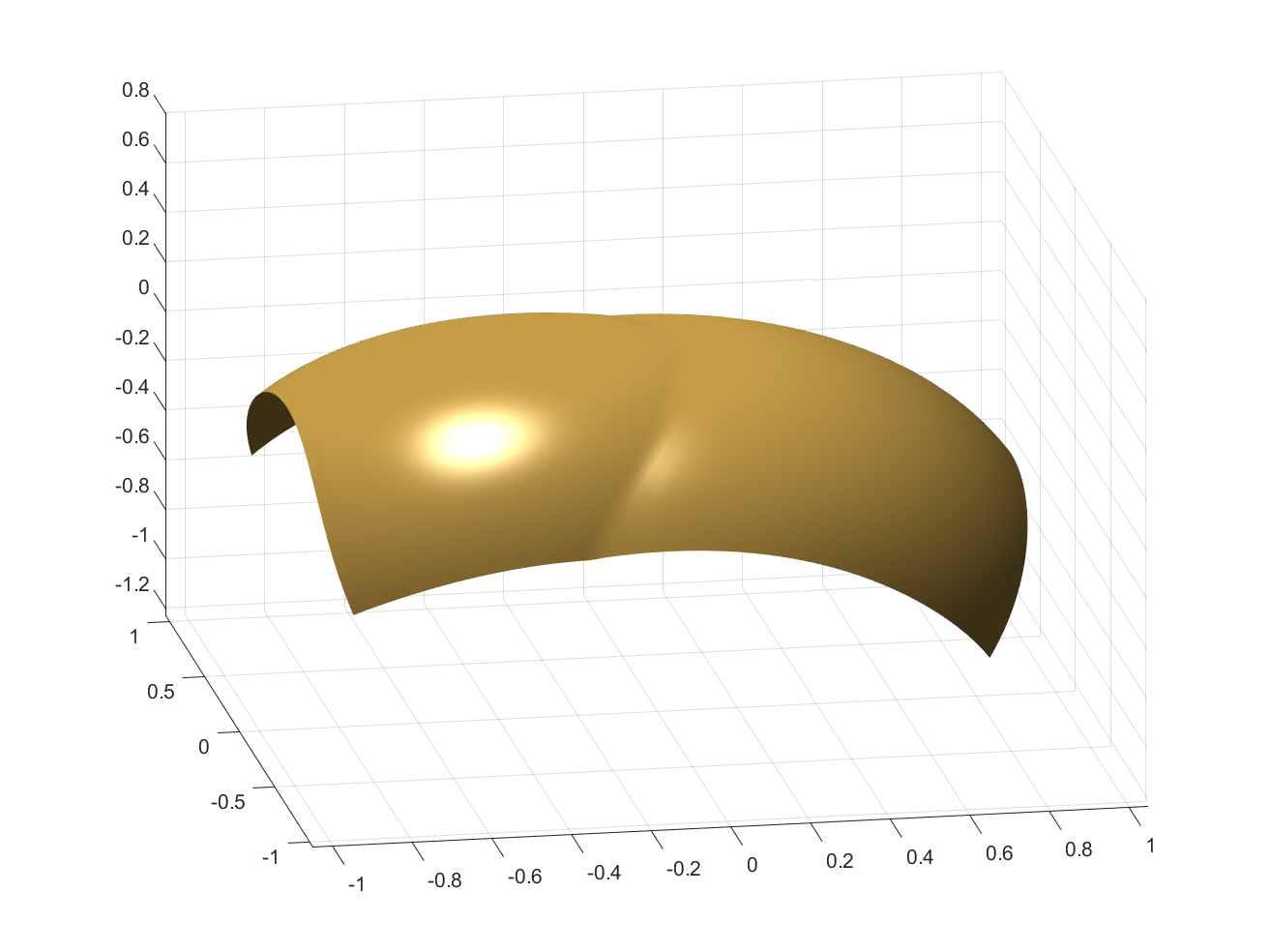}
\end{minipage}
\begin{minipage}{0.49\textwidth}
\centering
\includegraphics[width=.90\textwidth]{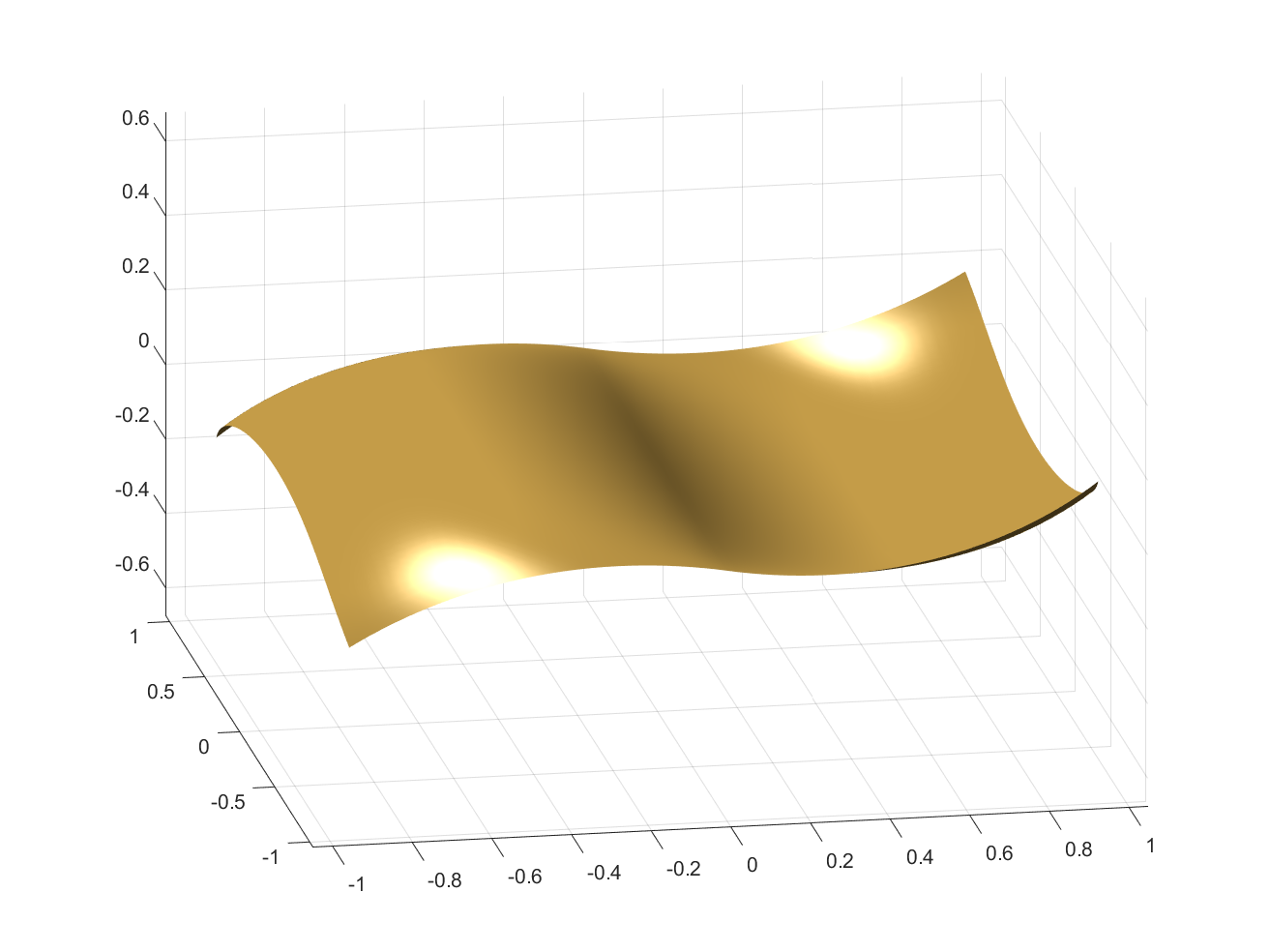}
\end{minipage}
\begin{minipage}{0.49\textwidth}
\centering
\includegraphics[width=.90\textwidth]{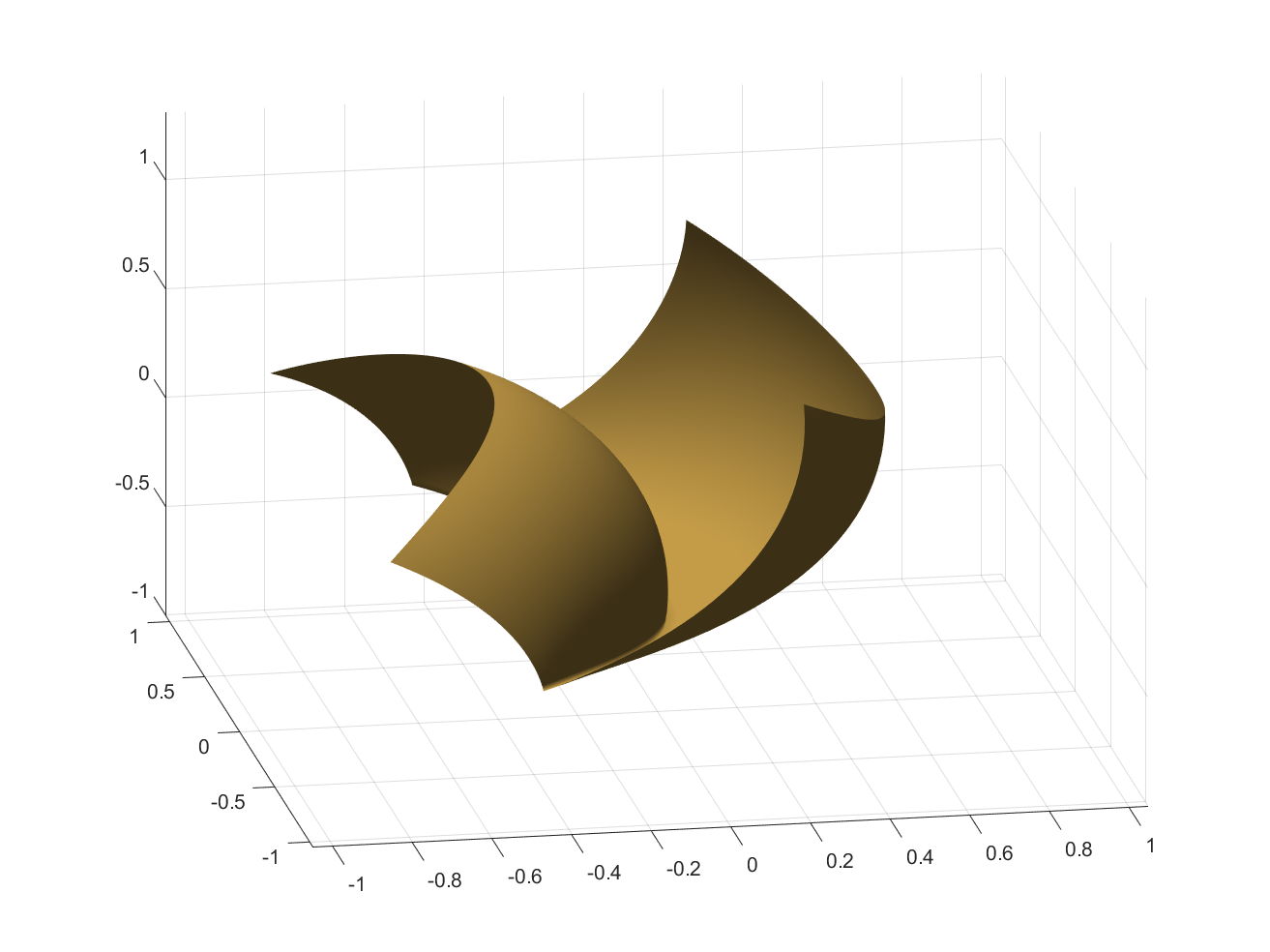}
\end{minipage}
\begin{minipage}{0.49\textwidth}
\centering
\includegraphics[width=.90\textwidth]{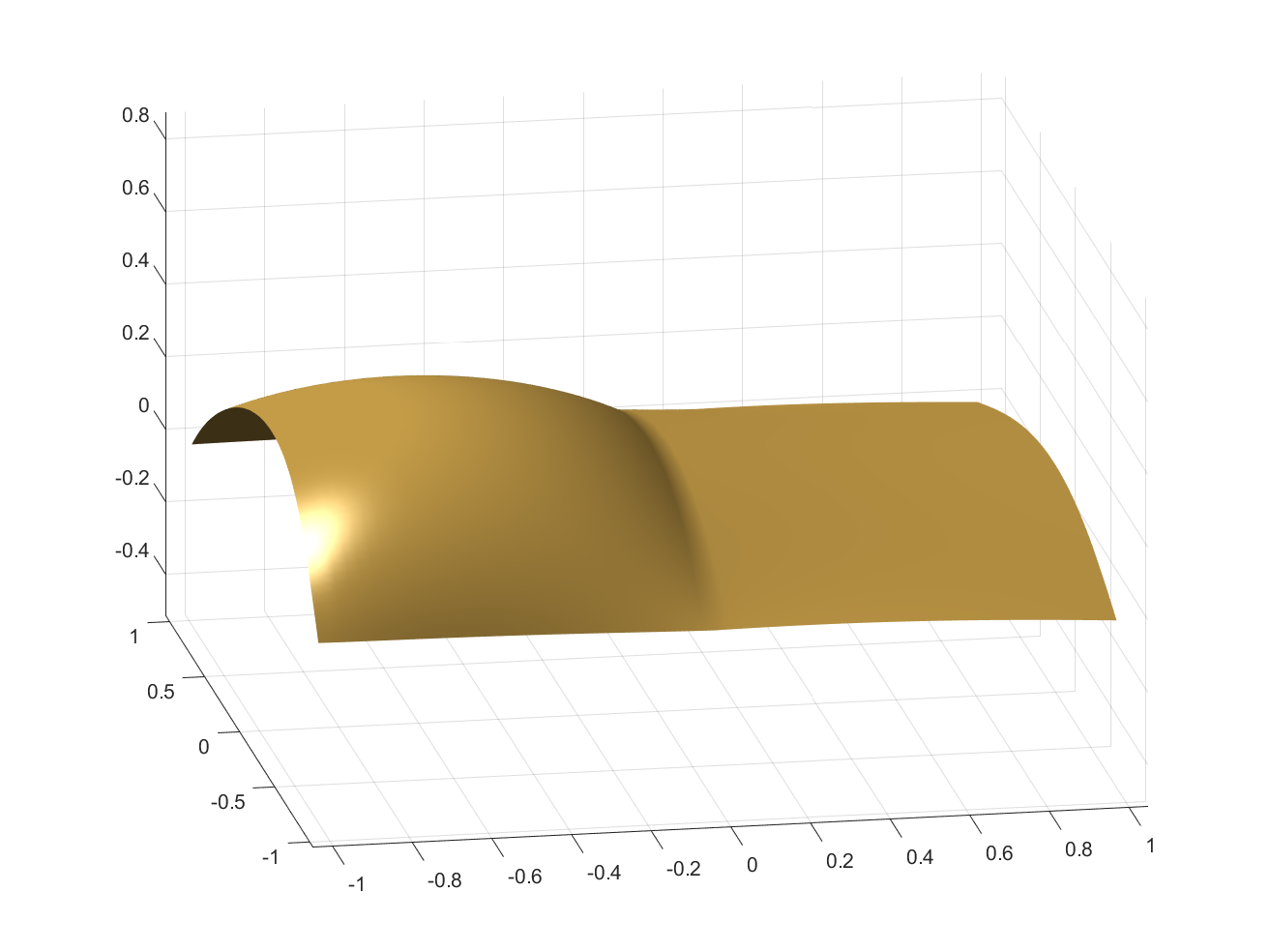}
\end{minipage}
\begin{minipage}{0.49\textwidth}
\centering
\includegraphics[width=.90\textwidth]{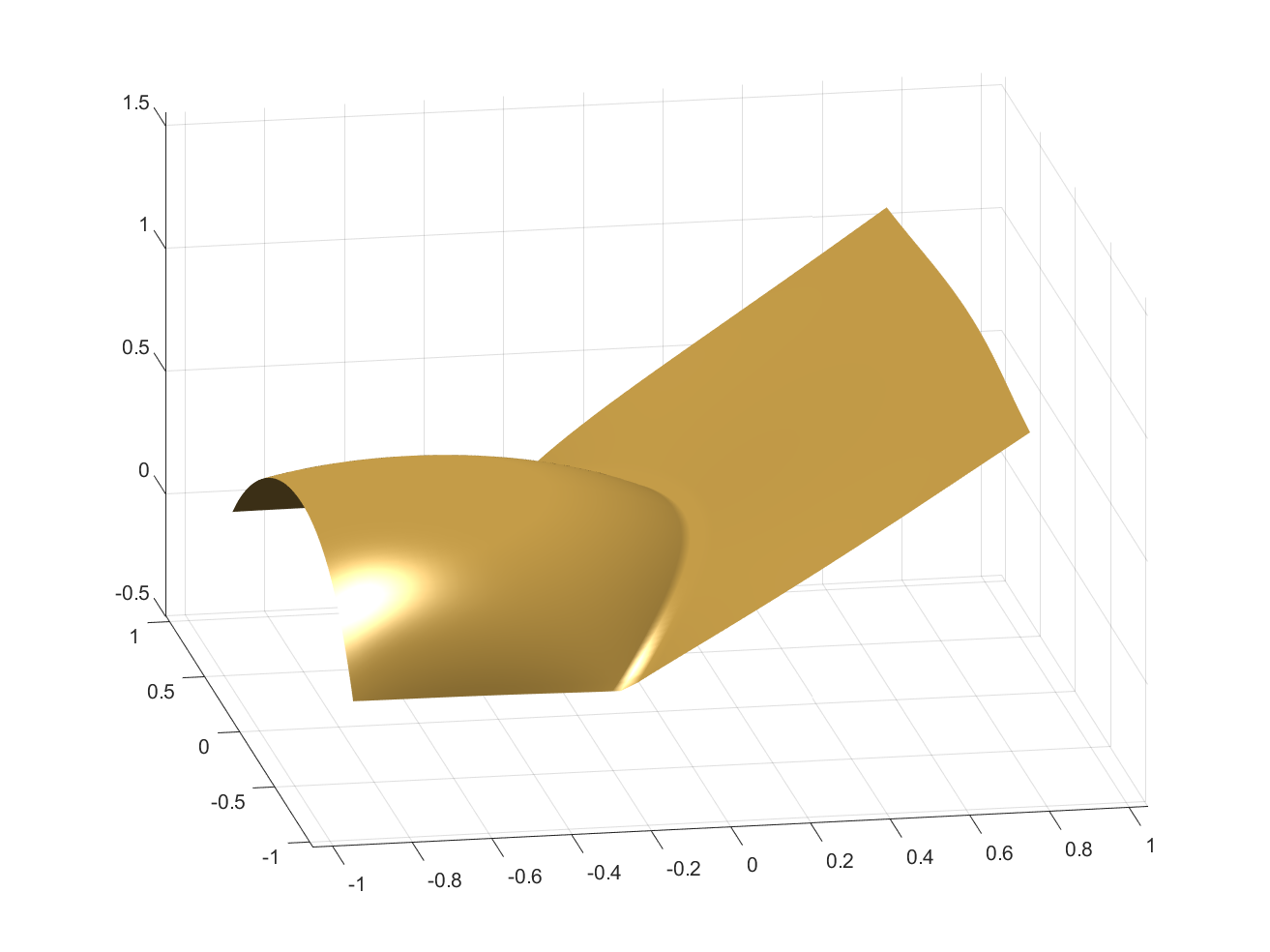}
\end{minipage}
\begin{minipage}{0.49\textwidth}
\centering
\includegraphics[width=.90\textwidth]{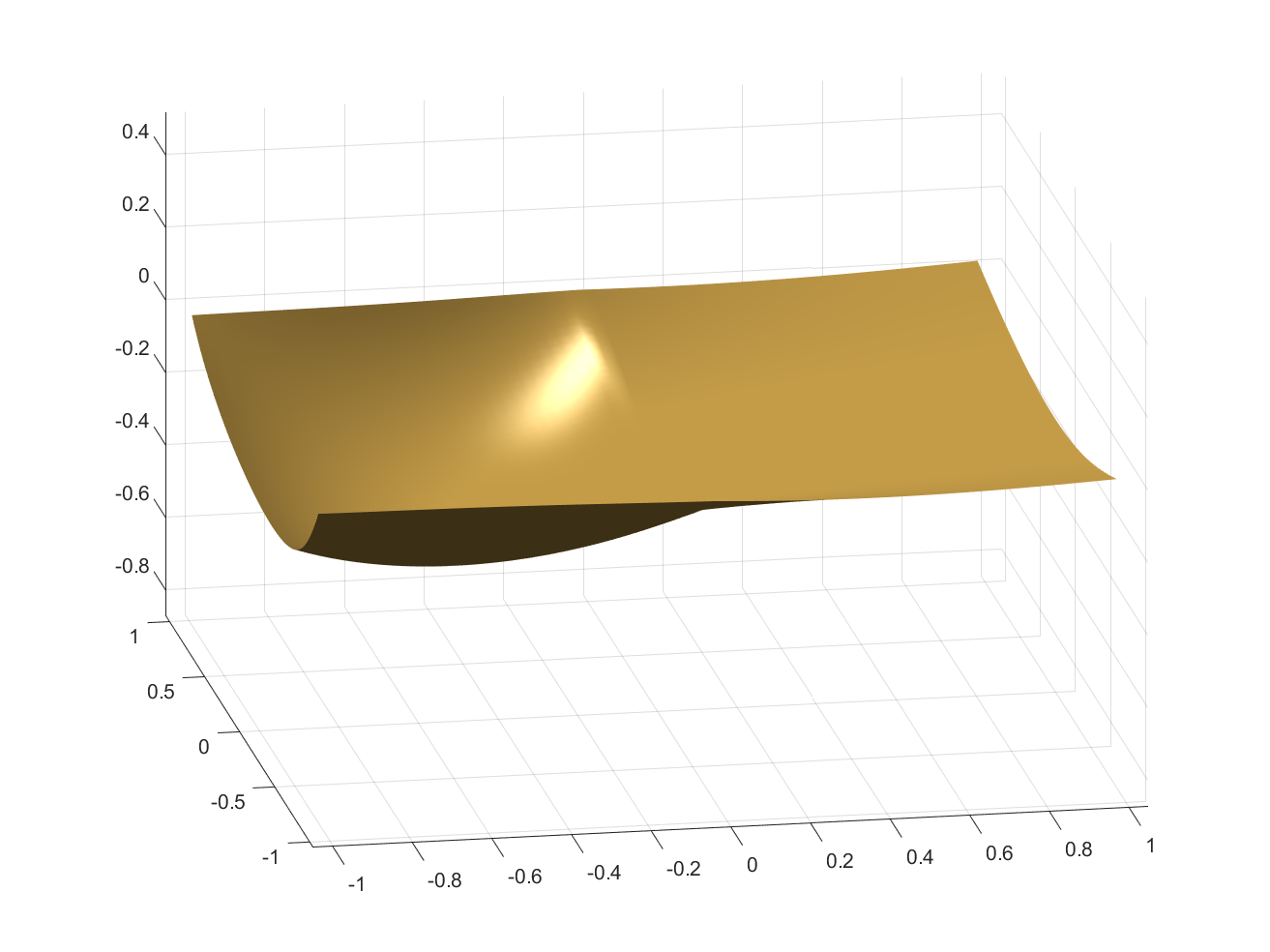}
\end{minipage}
\begin{minipage}{0.49\textwidth}
\centering
\includegraphics[width=.90\textwidth]{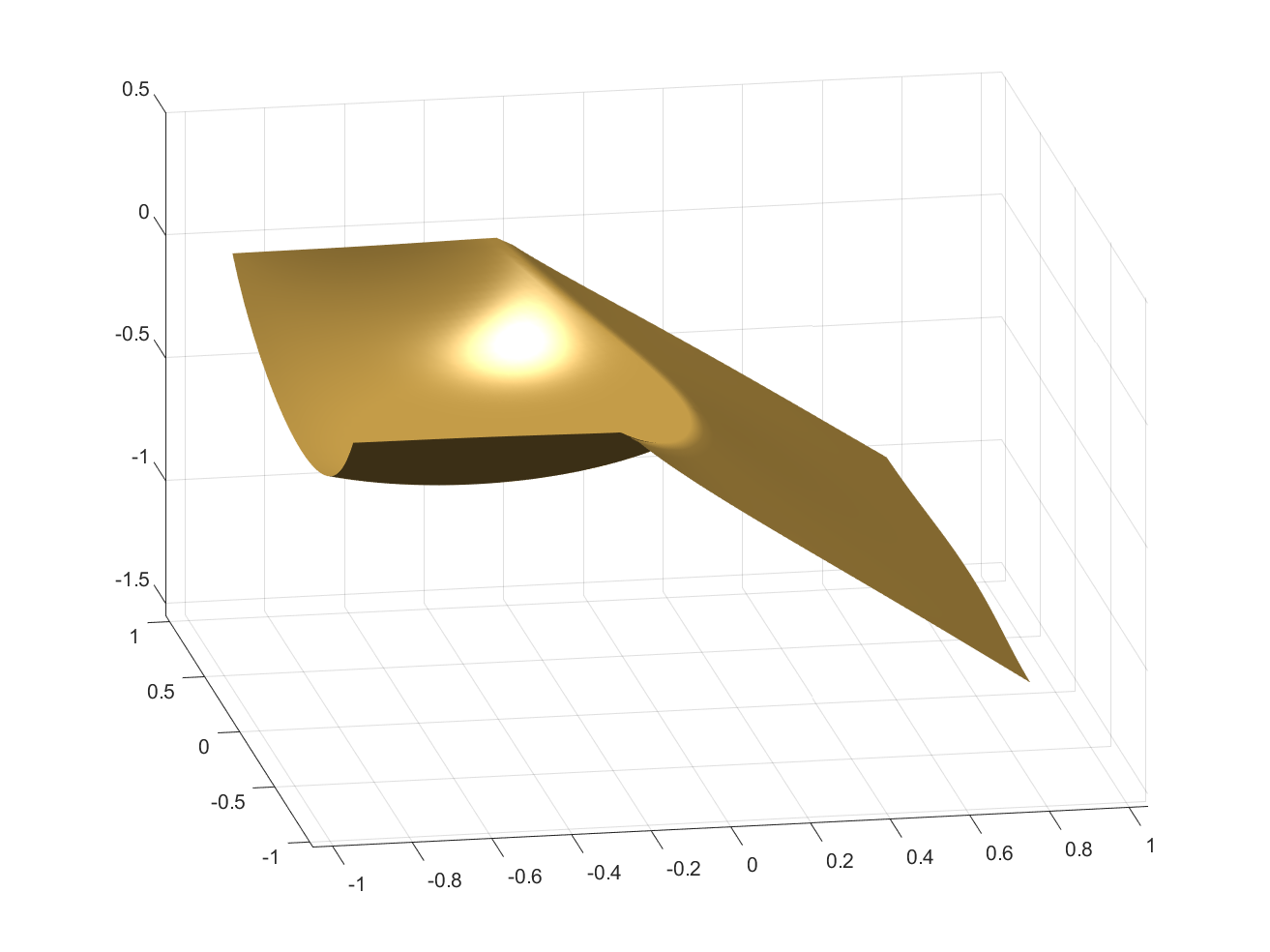}
\end{minipage}
\caption{Visualization of the numerical solutions for $\alpha_1=1,1,1,-1$, $\alpha_2=1,-1,0,0$ (from top to bottom) combined with a straight (left) and a curved (right) crease. In the last two rows simple support boundary conditions are imposed along
the upper and lower edge of the left subdomain. {\cc Drastic differences in the models with straight and curved crease geometries can be observed. For instance, the straight crease only leads to minor deflections of the right subdomain in the last two test, whereas the curved crease leads to significant deflections that appear in snapping mechanisms of Venus flytraps.}}
\label{fig:bilayer_folding}
\end{figure}

\newpage

\section*{Acknowledgements} 
{\cb The authors} acknowledge support by the DFG via the priority programme SPP 2256 \textit{Variational Methods for Predicting Complex Phenomena in Engineering Structures and Materials} ({\cc 441528968 and 441138507}). 

{\small\bibliography{Literatur}}

\Addresses

\end{document}